\newcommand{\pr}[1]{#1^{\prime}}
\newcommand{\del}{\partial}
\newcommand{\sgn}{\mathrm{sgn}}
\newcommand{\mfrak}[1]{\mathfrak{#1}}
\newcommand{\mcal}[1]{\mathcal{#1}}
\newcommand{\mbb}[1]{\mathbb{#1}}
\newcommand{\mrm}[1]{\mathrm{#1}}
\newcommand{\scr}[1]{\mathscr{#1}}
\newcommand{\what}[1]{\widehat{#1}}
\newcommand{\no}[1]{:\hspace{-3pt} #1\hspace{-3pt}:\hspace{3pt}}
\newcommand{\xRightarrow}[2][]{\ext@arrow 0359\Rightarrowfill@{#1}{#2}}
\theoremstyle{plain}
\newtheorem{thm}{Theorem}[section]
\newtheorem{lem}[thm]{Lemma}
\newtheorem{prop}[thm]{Proposition}
\newtheorem{cor}[thm]{Corollary}
\theoremstyle{definition}
\newtheorem{defn}[thm]{Definition}
\theoremstyle{remark}
\newtheorem{rem}[thm]{Remark}
\newtheorem{exam}[thm]{Example}
\title[]{Free field theory and observables of periodic Macdonald processes}
\author{Shinji Koshida}
\address{Department of Physics, Faculty of Science and Engineering, Chuo University, Kasuga, Bunkyo, Tokyo 112-8551, Japan}
\email{koshida@phys.chuo-u.ac.jp}
\begin{document}

\begin{abstract}
We propose periodic Macdonald processes as a $(q,t)$-deformation of periodic Schur processes and a periodic analogue of Macdonald processes. It is known that, in the theory of stochastic processes related to a family of symmetric functions, the Cauchy-like identity gives an explicit expression of a partition function. We compute the partition functions of periodic Macdonald processes relying on the free field realization of the Macdonald theory. We also study several families of observables for periodic Macdonald processes and give formulas of their moments. The technical tool is the free field realization of operators that are diagonalized by the Macdonald symmetric functions, where the operators admit expressions by means of vertex operators.
We show that, when we adopt Plancherel specializations, the corresponding periodic Macdonald process is related to a Young diagram-valued periodic continuous process.
It is known that, for periodic Schur processes, determinantal formulas are only available when we extend them to take into account the charge. We also consider this kind of shift-mixed periodic Macdonald processes and discuss their Schur-limit.
\end{abstract}

\subjclass[2020]{05E05,33D52,05A17}
\keywords{Macdonald symmetric functions, Periodic Macdonald process, Cylindric partitions}

\maketitle


\section{Introduction}
\label{sect:introduction}
Recent studies in the field of Integrable Probability revealed that many integrable stochastic models are unified in the framework of Macdonald processes \cite{BorodinCorwin2014,BorodinCorwinGorinShakirov2016} in the sense that, at a certain choice of specializations and parameters, the probability law of the corresponding Macdonald process coincides with that of a stochastic model of interest. Examples of such models include $q$-TASEP \cite{BorodinCorwin2014}, Hall--Littlewood plane partitions \cite{Dimitrov2018}, general $\beta$-ensembles \cite{BorodinGorin2015} (see also \cite{ Petrov2009,BorodinGorin2012,Oconnell2012, OconnellSeppalainenZygouras2014,CorwinOconnellSeppalainenZygouras2014,Borodin2014,BorodinPetrov2014,Corwin2014}).
One of features of Macdonald processes is its determinantal structure; there are several series of observables available for Macdonald processes whose expectation values are written as multiple integrals of determinants, and moreover, when we consider the generating function of a series of observables, its expectation value is expressed by a Fredholm determinant. Therefore, once a stochastic model is identified with a Macdonald process, one can study it by means of its determinantal structure and even carry out an asymptotic analysis (e.g. \cite{BorodinCorwinRemenik2013,BorodinCorwin2014,BorodinCorwin2014b, BorodinCorwinFerrari2014,Barraquand2015, BorodinGorin2015, FerrariVeto2015}).

At the Schur-limit, where the Macdonald symmetric functions reduce to the Schur symmetric functions, Macdonald processes obviously reduce to Schur processes \cite{Okounkov2001,OkounkovReshetikhin2003}. In \cite{Borodin2007}, there was proposed a periodic analogue of Schur processes and shown that the periodic Schur processes admit determinantal structure in the correlation functions after taking into account the charge (see also \cite{BeteaBouttier2019} for an alternative approach). Then, it would be natural to ask how a periodic analogue of Macdonald processes can be formulated and observables are available, which is addressed in the present paper.

We fix parameters $0<q,t,u<1$ and write $P_{\lambda/\mu}(X;q,t)$ and $Q_{\lambda/\mu}(X;q,t)$ for the Macdonald symmetric function and the dual Macdonald symmetric function associated with a skew partition $\lambda/\mu$. Here, we understand $X=(x_{1},x_{2},\dots)$ as a set of infinitely many variables.
We also take sequences $\bm{\rho}^{+}=(\rho^{+}_{0},\dots,\rho^{+}_{N-1})$ and $\bm{\rho}^{-}=(\rho^{-}_{1},\dots,\rho^{-}_{N})$ of $(q,t)$-Macdonald positive specializations of the ring of symmetric functions (see Sect.\ \ref{sect:preliminaries} for definition).
We associate to two sequences $\bm{\lambda}=(\lambda^{1},\dots,\lambda^{N})$ and $\bm{\mu}=(\mu^{1},\dots,\mu^{N})$ of partitions 
the following weight:
\begin{equation}
\label{eq:periodic_weight}
	W^{\bm{\rho}^{+},\bm{\rho}^{-}}_{q,t;u}(\bm{\lambda},\bm{\mu})=u^{|\mu^{N}|}\prod_{i=1}^{N}Q_{\lambda^{i}/\mu^{i}}(\rho^{-}_{i};q,t)P_{\lambda^{i+1}/\mu^{i}}(\rho^{+}_{i};q,t),
\end{equation}
where we understand $\lambda^{N+1}=\lambda^{1}$ and $\rho^{+}_{N}=\rho^{+}_{0}$.
It is obvious that the weight vanishes unless the partitions satisfy the following inclusion properties:
\begin{equation}
	\mu^{N}=\mu^{0}\subset \lambda^{1} \supset \mu^{1} \subset \lambda^{2} \supset \cdots \subset \lambda^{N-1} \supset \mu^{N-1} \subset \lambda^{N}\supset \mu^{N}.
\end{equation}
In this paper, the following $n$-fold $q$-Pochhammer symbol is extensively used:
\begin{equation}
	(a;p_{1},\dots,p_{n})_{\infty}=\prod_{i_{1},\dots, i_{n}=0}^{\infty}(1-ap_{1}^{i_{1}}\cdots p_{n}^{i_{n}}).
\end{equation}

The following proposition gives the partition function of a periodic Macdonald process.
\begin{prop}
\label{prop:Cauchy_identity}
We have
\begin{equation}
	\Pi_{q,t;u}(\bm{\rho}^{+};\bm{\rho}^{-}):=\sum_{\bm{\lambda},\bm{\mu}\in\mbb{Y}^{N}}W^{\bm{\rho}^{+},\bm{\rho}^{-}}_{q,t;u}(\bm{\lambda},\bm{\mu})
	=\frac{1}{(u;u)_{\infty}}\frac{\prod_{i=0}^{N-1}\prod_{j=1}^{N}\tilde{\Pi}_{q,t;u}(\rho^{+}_{i};\rho^{-}_{j})}{\prod_{i=0}^{N-1}\prod_{j=1}^{i}\tilde{\Pi}_{q,t;u}(\rho^{+}_{i};\rho^{-}_{j})},
\end{equation}
where
\begin{equation}
	\tilde{\Pi}_{q,t;u}(X;Y)=\prod_{i,j\ge 1}\frac{(tx_{i}y_{j};q,u)_{\infty}}{(x_{i}y_{j};q,u)_{\infty}}.
\end{equation}
\end{prop}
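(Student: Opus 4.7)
The plan is to pass from the combinatorial sum defining $\Pi_{q,t;u}(\bm{\rho}^{+};\bm{\rho}^{-})$ to a single cyclic trace over a Macdonald Fock space, and then to evaluate the trace by repeated use of a vertex-operator exchange relation. Concretely, I would introduce operators $\Phi_{+}(X)$ and $\Phi_{-}(Y)$ acting on the span of $\{|\lambda\rangle\}_{\lambda\in\mathbb{Y}}$ whose matrix elements are $\langle\mu|\Phi_{+}(X)|\lambda\rangle=P_{\lambda/\mu}(X;q,t)$ and $\langle\lambda|\Phi_{-}(Y)|\mu\rangle=Q_{\lambda/\mu}(Y;q,t)$; realising these as exponentials of Heisenberg modes---the free field construction advertised in the introduction---yields the exchange relation
\[
\Phi_{+}(X)\Phi_{-}(Y)=\tilde{\Pi}(X;Y;q,t)\,\Phi_{-}(Y)\Phi_{+}(X),\qquad \tilde{\Pi}(X;Y;q,t)=\prod_{i,j}\frac{(tx_{i}y_{j};q)_{\infty}}{(x_{i}y_{j};q)_{\infty}},
\]
together with mutual commutativity of the $\Phi_{+}$'s among themselves and of the $\Phi_{-}$'s among themselves. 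Moreover the grading operator $L$ (defined by $L|\lambda\rangle=|\lambda|\,|\lambda\rangle$) acts on them as $u^{L}\Phi_{+}(X)u^{-L}=\Phi_{+}(u^{-1}X)$ and $u^{L}\Phi_{-}(Y)u^{-L}=\Phi_{-}(uY)$.

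Inserting resolutions of the identity $\sum_{\mu}|\mu\rangle\langle\mu|$ between consecutive skew factors of the weight and using $\sum_{\mu}u^{|\mu|}|\mu\rangle\langle\mu|=u^{L}$ to absorb the $u^{|\mu^{N}|}$ weighting at the $\mu^{N}$ slot, the sum rearranges into the trace
\[
\Pi_{q,t;u}(\bm{\rho}^{+};\bm{\rho}^{-})=\tr\bigl[u^{L}\Phi_{+}(\rho^{+}_{0})\Phi_{-}(\rho^{-}_{1})\Phi_{+}(\rho^{+}_{1})\Phi_{-}(\rho^{-}_{2})\cdots\Phi_{+}(\rho^{+}_{N-1})\Phi_{-}(\rho^{-}_{N})\bigr],
\]
where the order is dictated by the identifications $\lambda^{N+1}=\lambda^{1}$, $\rho^{+}_{N}=\rho^{+}_{0}$ and by the cyclicity of the trace.

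I would then evaluate this trace by extracting the $\Phi_{+}$'s one at a time through a ``rotation-around-the-loop'' argument. For each $i=0,1,\dots,N-1$, the operator $\Phi_{+}(\rho^{+}_{i})$ is first commuted rightward past the $\Phi_{-}(\rho^{-}_{j})$'s initially to its right, producing a finite product of kernel factors $\tilde{\Pi}(\rho^{+}_{i};\rho^{-}_{j})$ for those $j$; it is then sent around the trace via cyclicity and the rescaling $u^{L}\Phi_{+}(X)=\Phi_{+}(u^{-1}X)u^{L}$, returning to the front of the word with argument replaced by $u\rho^{+}_{i}$. Each subsequent lap picks up the complete product $\prod_{j=1}^{N}\tilde{\Pi}(u^{r}\rho^{+}_{i};\rho^{-}_{j})$, and telescoping over $r\ge 0$ assembles these into the periodic kernel $\tilde{\Pi}_{q,t;u}(\rho^{+}_{i};\rho^{-}_{j})$; collected over all pairs $(i,j)$ this reproduces the numerator of the claimed formula, while the pairs with $1\le j\le i$ that are not touched by the initial in-word rearrangement contribute the denominator. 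After all $\Phi_{+}$'s have been removed, the residual $\tr[u^{L}\Phi_{-}(\rho^{-}_{1})\cdots\Phi_{-}(\rho^{-}_{N})]$ collapses to $\sum_{\lambda}u^{|\lambda|}=1/(u;u)_{\infty}$, since each $\Phi_{-}$ is unit-triangular with respect to the grading (as $Q_{\lambda/\lambda}\equiv 1$).

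The main obstacle is the careful bookkeeping of the commutation kernels as each $\Phi_{+}$ is driven around the loop: one must cleanly distinguish the finite collection of factors produced during the first in-word rearrangement from the infinite sequence produced by the subsequent rescaled laps through $u^{L}$, because only the latter telescope into the periodic kernels $\tilde{\Pi}_{q,t;u}$ appearing in the answer, and it is precisely the mismatch between ``first-lap'' and ``subsequent-lap'' contributions that carves out the denominator structure. The remaining ingredients---the existence and exchange relation of the Macdonald vertex operators, their grading action, and the unit-triangularity of $\Phi_{\pm}$---are standard outputs of the free field realisation recalled in the introduction.
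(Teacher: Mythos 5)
Your argument is correct, and its first half coincides with the paper's: both encode the weight through the vertex operators $\Gamma(X)_{\pm}$ (your $\Phi_{\pm}$) and rewrite the partition function as $\mrm{Tr}_{\mcal{F}}\bigl(u^{D}\Gamma(\rho^{+}_{0})_{+}\Gamma(\rho^{-}_{1})_{-}\cdots\Gamma(\rho^{+}_{N-1})_{+}\Gamma(\rho^{-}_{N})_{-}\bigr)$. The two proofs then evaluate this trace by genuinely different means. The paper normal-orders the entire word in a single pass via the OPE (Proposition~\ref{prop:OPE}), collecting $\prod_{i<j}\tilde{\Pi}_{q,t;0}(\rho^{+}_{i};\rho^{-}_{j})$, and then applies the closed-form trace formula for a single vertex operator (Proposition~\ref{prop:trace_formula}), which supplies the remaining factor for \emph{all} pairs $(i,j)$ at once. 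You instead drive each $\Gamma(\rho^{+}_{i})_{+}$ around the loop using cyclicity of the trace and $\Gamma(X)_{+}u^{D}=u^{D}\Gamma(uX)_{+}$, so that it contributes $\prod_{j>i}\tilde{\Pi}_{q,t;0}(\rho^{+}_{i};\rho^{-}_{j})\cdot\prod_{r\geq 1}\prod_{j=1}^{N}\tilde{\Pi}_{q,t;0}(u^{r}\rho^{+}_{i};\rho^{-}_{j})$ before disappearing, and the telescoping $\prod_{r\geq 0}\tilde{\Pi}_{q,t;0}(u^{r}X;Y)=\tilde{\Pi}_{q,t;u}(X;Y)$ assembles the answer; the residual $\mrm{Tr}_{\mcal{F}}\bigl(u^{D}\Gamma(\rho^{-}_{1})_{-}\cdots\Gamma(\rho^{-}_{N})_{-}\bigr)=1/(u;u)_{\infty}$ by unitriangularity, as you say. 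Your route dispenses with Proposition~\ref{prop:trace_formula} entirely (it parallels the original treatment of periodic Schur processes), at the cost of justifying the infinite iteration, namely that $\Gamma(u^{r}X)_{+}$ tends to the identity and that the infinite product of kernels converges; both are immediate order by order in $u$. One point of bookkeeping deserves care: for a pair with $1\leq j\leq i$ the only missing factor is the single first-lap kernel $\tilde{\Pi}_{q,t;0}(\rho^{+}_{i};\rho^{-}_{j})$, so your computation (and the paper's own) actually produces the denominator $\prod_{i=0}^{N-1}\prod_{j=1}^{i}\tilde{\Pi}_{q,t;0}(\rho^{+}_{i};\rho^{-}_{j})$, equivalently those pairs contribute $\tilde{\Pi}_{q,t;u}(u\rho^{+}_{i};\rho^{-}_{j})$; the subscript $u$ on the denominator factors in the displayed statement appears to be a misprint, and you should not distort the telescoping in an attempt to reproduce it literally.
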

Though this could be proved by a direct computation, we show it in Sect.\ \ref{sect:periodic_Macdonald} by identifying the ring of symmetric functions with a Fock representation of a Heisenberg algebra and applying free field computation.
The relevant formula is for the trace of a vertex operator (in an extended sense) shown in Subsect.\ \ref{subsect:Fock_representation}.

It is natural to define a periodic Macdonald process as follows:
\begin{defn}
Fix $N\in\mbb{N}$ and parameters $0<q,t,u<1$, take sequences $\bm{\rho}^{+}=(\rho^{+}_{0},\dots,\rho^{+}_{N-1})$ and $\bm{\rho}^{-}=(\rho^{-}_{1},\dots,\rho^{-}_{N})$ of $(q,t)$-Macdonald positive specializations of the ring of symmetric functions. 
We define a probability measure on $\mbb{Y}^{N}$ from the weight in (\ref{eq:periodic_weight}) by
\begin{equation}
	\mbb{P}^{\bm{\rho}^{+},\bm{\rho}^{-}}_{q,t;u}(\lambda^{1},\cdots,\lambda^{N})=\frac{1}{\Pi_{q,t;u}(\bm{\rho}^{+};\bm{\rho}^{-})}\sum_{\bm{\mu}\in\mbb{Y}^{N}}W^{\bm{\rho}^{+},\bm{\rho}^{-}}_{q,t;u}(\bm{\lambda},\bm{\mu})
\end{equation}
and call it an $N$-step periodic Macdonald process. When $N=1$, the probability measure on $\mbb{Y}$ is called a periodic Macdonald measure.
\end{defn}

In Sect.\ \ref{eq:Plancherel_process}, we will define a continuous time periodic stochastic process on $\mbb{Y}$ called a stationary periodic Macdonald--Plancherel process extending the formulation presented in \cite{BeteaBouttier2019}, and show that the marginal law for a fixed sequence of times is described by a periodic Macdonald process specified by Plancherel specializations.
This stochastic process reduces to the one considered in \cite{BeteaBouttier2019} at $q=t$.

We also study observables for periodic Macdonald processes.
In \cite{Shiraishi2006, FeiginHashizumeHoshinoShiraishiYanagida2009}, the authors showed that several operators that are diagonalized by the Macdonald symmetric functions
admit expressions using vertex operators on a Fock space.
In the previous work \cite{SK2019}, we applied their construction to Macdonald processes, recognized that earlier results regarding Macdonald processes are recovered in the free field approach and furthermore found that the determinantal structure of Macdonald processes is manifest at the operator level.
In this paper, we apply the method in \cite{SK2019} to periodic Macdonald processes.

In our setting, any function on $\mbb{Y}$ can be regarded as a random variable. For $N$ random variables $f_{1},\dots, f_{N}$, we define their moment under an $N$-step periodic Macdonald process $\mbb{P}^{\bm{\rho}^{+},\bm{\rho}^{-}}_{q,t;u}$ by
\begin{equation}
	\mbb{E}^{\bm{\rho}^{+},\bm{\rho}^{-}}_{q,t;u}\left[f_{1}[1]\cdots f_{N}[N]\right]:=\sum_{(\lambda^{1},\dots,\lambda^{N})\in\mbb{Y}^{N}}f_{1}(\lambda^{1})\cdots f_{N}(\lambda^{N})\mbb{P}^{\bm{\rho}^{+},\bm{\rho}^{-}}_{q,t;u}(\lambda^{1},\dots,\lambda^{N}).
\end{equation}
If $N=1$, we simply write $\mbb{E}^{\rho^{+},\rho^{-}}_{q,t;u}[f]:=\mbb{E}^{\rho^{+},\rho^{-}}_{q,t;u}[f[1]]$.
In the description of the results, the following function of $\bm{z}^{(a)}=(z^{(a)}_{i},\dots, z^{(a)}_{r_{a}})$, $a=1,\dots, N$ depending on two parameters $p_{1},p_{2}$ is used repeatedly.
\begin{align}
	\Delta_{p_{1},p_{2};u}(\bm{z}^{(1)},\dots, \bm{z}^{(N)})=&\prod_{a<b}\prod_{i=1}^{r_{a}}\prod_{j=1}^{r_{b}}\frac{(z^{(b)}_{j}/z^{(a)}_{i};u)_{\infty}(p_{1}p_{2}z^{(b)}_{j}/z^{(a)}_{i};u)_{\infty}}{(p_{1}z^{(b)}_{j}/z^{(a)}_{i};u)_{\infty}(p_{2}z^{(b)}_{j}/z^{(a)}_{i};u)_{\infty}} \\
	&\times \prod_{a\ge b}\prod_{i=1}^{r_{a}}\prod_{j=1}^{r_{b}}\frac{(uz^{(b)}_{j}/z^{(a)}_{i};u)_{\infty}(p_{1}p_{2}uz^{(b)}_{j}/z^{(a)}_{i};u)_{\infty}}{(p_{1}uz^{(b)}_{j}/z^{(a)}_{i};u)_{\infty}(p_{2}uz^{(b)}_{j}/z^{(a)}_{i};u)_{\infty}}. \notag
\end{align}

Let us fix a certain family of specializations of symmetric functions. The specialization denoted by $q^{\lambda}t^{-\rho+n}$, $\lambda\in\mbb{Y}$, $n\in\mbb{Z}$ is defined by
\begin{equation}
\label{eq:specialization_lambda_rho}
	p_{r}(q^{\lambda}t^{-\rho+n}):=\sum_{i=1}^{\ell (\lambda_{i})}(q^{\lambda_{i}}t^{-i+n})^{r}+\frac{t^{-r(\ell (\lambda)-n+1)}}{1-t^{-r}}, \quad n\in\mbb{N}, \quad \lambda\in\mbb{Y},
\end{equation}
where $p_{r}$, $r\in\mbb{N}$ is the $r$-th power-sum symmetric function.
The specialization $q^{-\lambda}t^{\rho-n}$, $\lambda\in\mbb{Y}$, $n\in\mbb{Z}$ is defined by replacing $q$ by $q^{-1}$ and $t$ by $t^{-1}$ in Eq.~(\ref{eq:specialization_lambda_rho}).

We introduce a series of observables.
For $r\in\mbb{N}$, we consider a random variable $\mcal{E}_{r}:\mbb{Y}\to\mbb{F}$ defined by $\mcal{E}_{r}(\lambda):=e_{r}(q^{\lambda}t^{-\rho+1})$, $\lambda\in\mbb{Y}$, where $e_{r}$ is the $r$-th elementary symmetric function.
Then, the moment of these random variables under an $N$-step periodic Macdonald process is computed as follows.

\begin{thm}
\label{thm:first_series}
For $r_{1},\dots, r_{N}\in\mbb{N}$, the moment of $\mcal{E}_{r_{1}},\dots, \mcal{E}_{r_{N}}$ under the $N$-step periodic Macdonald process $\mbb{P}^{\bm{\rho}^{+},\bm{\rho}^{-}}_{q,t;u}$ is computed as
\begin{align}
\label{eq:expectation_first_series}
	&\mbb{E}^{\bm{\rho}^{+},\bm{\rho}^{-}}_{q,t;u}[\mcal{E}_{r_{1}}[1]\cdots \mcal{E}_{r_{N}}[N]] \\
	&=\frac{1}{\bm{r}!}\int D^{\bm{r}}\bm{z} \prod_{a=1}^{N}\det\left(K^{\bm{\rho}^{+},\bm{\rho}^{-};a}_{q,t;u;\mcal{E}}(z_{i}^{(a)},z_{j}^{(a)})\right)_{1\le i,j\le r_{a}}\Delta_{q,t^{-1};u}(\bm{z}^{(1)},\dots, \bm{z}^{(N)}), \notag
\end{align}
where we set $\bm{r}!:=\prod_{a=1}^{N}r_{a}!$ and
\begin{align}
	K_{q,t;u;\mcal{E}}^{\bm{\rho}^{+},\bm{\rho}^{-};a}(z,w):=\frac{1}{z-t^{-1}w}\prod_{n>0}&\prod_{b=0}^{N-1}\exp\left(\left(u^{n}\chi [b\ge a]+\chi [b<a]\right)\frac{1-t^{-n}}{1-u^{n}}\frac{p_{n}(\rho_{b}^{+})}{n}z^{n}\right) \\
	&\times\prod_{b=1}^{N}\exp\left(-\left(u^{n}\chi [b<a]+\chi [b\ge a]\right)\frac{1-t^{n}}{1-u^{n}}\frac{p_{n}(\rho_{b}^{-})}{n}z^{-n}\right). \notag
\end{align}
Here $\chi[\cdot]$ is the indicator defined by $\chi [P]=1$ if $P$ is true and $\chi [P]=0$ if $P$ is false.
The part $(z-t^{-1}w)^{-1}$ is understood formally as $\sum_{k=0}^{\infty}z^{-1}(t^{-1}w/z)^{k}$, and
the functional $\int D^{\bm{r}}\bm{z}$ takes the coefficient of $\prod_{a=1}^{N}\prod_{i=1}^{r_{a}}(z_{i}^{(a)})^{-1}$.
\end{thm}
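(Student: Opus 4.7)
The plan is to carry out the computation in the free field framework, following the strategy of \cite{SK2019} but with the vacuum expectation value replaced by a trace to accommodate the periodicity. Identify the ring of symmetric functions with the Fock representation $\mcal{F}$ of a Heisenberg algebra so that the Macdonald symmetric functions $P_{\lambda}$ form a distinguished basis, and the specializations $\rho^{\pm}_{i}$ correspond to the standard vertex operators $\Gamma_{\pm}(\rho^{\pm}_{i})$ whose matrix elements reproduce the skew Macdonald functions. Under this identification, the weighted sum underlying the left-hand side of (\ref{eq:expectation_first_series}) is realized as
\begin{equation*}
\frac{1}{\Pi_{q,t;u}(\bm{\rho}^{+};\bm{\rho}^{-})}\tr_{\mcal{F}}\left(u^{L}\prod_{a=1}^{N}\Gamma_{-}(\rho^{-}_{a})\, E_{r_{a}}\, \Gamma_{+}(\rho^{+}_{a-1})\right),
\end{equation*}
where $L$ is the degree operator and $E_{r}$ is the operator on $\mcal{F}$ diagonalized by $\{P_{\lambda}\}$ with eigenvalues $e_{r}(q^{\lambda}t^{-\rho+1})$.

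The next step is to replace each $E_{r_{a}}$ by its integral representation in terms of the Shiraishi-type vertex operator $\eta(z)$ from \cite{Shiraishi2006, FeiginHashizumeHoshinoShiraishiYanagida2009}, which realizes $E_{r}$ through normally-ordered products of $\eta$'s whose Fourier coefficients extract the elementary symmetric functions of $q^{\lambda}t^{-\rho+1}$. After this substitution, the task is to evaluate a trace of a product of vertex operators of the form
\begin{equation*}
\tr_{\mcal{F}}\left(u^{L}\prod_{a=1}^{N}\Gamma_{-}(\rho^{-}_{a})\left(\frac{1}{r_{a}!}\int D^{r_{a}}\bm{z}^{(a)}\,\eta(z^{(a)}_{1})\cdots\eta(z^{(a)}_{r_{a}})\right)\Gamma_{+}(\rho^{+}_{a-1})\right).
\end{equation*}

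The main computation is to normal-order all the $\eta(z^{(a)}_{i})$ in the trace. Commuting each $\eta(z^{(a)}_{i})$ past $\Gamma_{\pm}(\rho^{\pm}_{b})$ produces the standard free-field scalar factors, with mode-$n$ exponent proportional to $\frac{1-t^{\mp n}}{n}p_{n}(\rho^{\pm}_{b})(z^{(a)}_{i})^{\pm n}$. Depending on whether $\Gamma_{\pm}(\rho^{\pm}_{b})$ sits before or after $\eta(z^{(a)}_{i})$ in the cyclic product, the contraction either winds once around the trace (picking up $u^{n}$ summed as a geometric series) or does not, and this dichotomy is exactly what produces the combinations $\chi[b\ge a]+u^{n}\chi[b<a]$ and their duals in $K^{\bm{\rho}^{+},\bm{\rho}^{-};a}_{q,t;u;\mcal{E}}$. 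Contractions among the $\eta(z^{(a)}_{i})$'s at a common time $a$ yield a simple-pole contribution $(z^{(a)}_{i}-t^{-1}z^{(a)}_{j})^{-1}$ together with regular exponential factors; the Cauchy determinant identity reorganizes the simple-pole pieces into $\det(K^{a}(z^{(a)}_{i},z^{(a)}_{j}))_{1\le i,j\le r_{a}}$ after absorbing the regular exponential factors into $K^{a}$. Contractions between $\eta(z^{(a)}_{i})$ and $\eta(z^{(b)}_{j})$ for $a\ne b$, inserted through $u^{L}$ in the trace, generate the infinite products in $u$ that constitute $\Delta_{q,t^{-1};u}(\bm{z}^{(1)},\dots,\bm{z}^{(N)})$ via the trace formula of Subsect.~\ref{subsect:Fock_representation}.

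The main obstacle is the careful bookkeeping of winding contributions for each pair of operators and verifying that the exponential factors and contractions assemble exactly into the stated forms of $K$ and $\Delta$. This bookkeeping parallels that of \cite{SK2019} for ordinary Macdonald processes, but is more delicate here since each contraction now carries a cyclic structure indexed by winding number; the trace formula for vertex operators from Subsect.~\ref{subsect:Fock_representation} is the essential technical input that makes this tractable.
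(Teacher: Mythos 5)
Your overall strategy is the paper's: realize the moment as a trace over $\mcal{F}$ of a product of vertex operators, substitute the free field realization of the Macdonald operators, normal-order everything by OPE, and finish with the trace formula of Subsect.~\ref{subsect:Fock_representation}, with the winding dichotomy producing the $\chi[b\ge a]$ versus $u^{n}\chi[b<a]$ structure. However, two concrete points in your write-up are wrong, and the first would produce an incorrect answer if followed literally. Your displayed trace has the vertex operators in the wrong order around the observable: with the paper's conventions ($Q_{\lambda/\mu}(Y)=\braket{Q_{\lambda}|\Gamma(Y)_{-}|P_{\mu}}$ and $P_{\lambda/\mu}(X)=\braket{Q_{\mu}|\Gamma(X)_{+}|P_{\lambda}}$), the slot carrying $\lambda^{a}$ sits between $\Gamma(\rho^{+}_{a-1})_{+}$ and $\Gamma(\rho^{-}_{a})_{-}$, so the correct insertion is $\Gamma(\rho^{+}_{a-1})_{+}\,\mcal{O}(\mcal{E}_{r_{a}})\,\Gamma(\rho^{-}_{a})_{-}$. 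Your ordering $\Gamma_{-}(\rho^{-}_{a})\,E_{r_{a}}\,\Gamma_{+}(\rho^{+}_{a-1})$ places the operator at a slot of the opposite type (a local minimum of the zigzag), so taken literally it evaluates the observables at the $\mu^{a}$'s and attaches the specializations to the wrong bonds. This must be corrected before the rest of the computation is meaningful.

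Second, the Cauchy determinant $\det\left((z_{i}-t^{-1}z_{j})^{-1}\right)$ does not arise from contractions among the $\eta(z^{(a)}_{i})$'s. In the free field realization of Proposition~\ref{prop:free_field_Macdonald_operator} the $\eta$'s at a common time are already normally ordered, and the determinant is part of the integrand from the outset; it is the remnant of the $\prod (tx_{i}-x_{j})/(x_{i}-x_{j})$ structure of the Macdonald difference operators. If you did contract two $\eta$'s you would obtain the cross-ratio $\frac{(1-w/z)(1-qt^{-1}w/z)}{(1-qw/z)(1-t^{-1}w/z)}$, not a simple pole; it is precisely these cross-ratios (for pairs at different times $a<b$), combined with the $u^{n}/(1-u^{n})$ contributions from the trace formula, that assemble into $\Delta_{q,t^{-1};u}$. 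The determinant of $K^{a}$ in the final formula is then simply the Cauchy determinant with the single-variable exponential prefactors absorbed, as you correctly note. With these two corrections your outline coincides with the paper's proof.
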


We will prove Theorem~\ref{thm:first_series} relying on the free field realization of the observable $\mcal{E}_{r}$, $r\in\mbb{N}$ and the trace formula for a vertex operator in Sect.\ \ref{sect:observables}, where we also see the moments of other series of observables. Note that, in the formula (\ref{eq:expectation_first_series}), the measure part $\Delta_{q,t^{-1};u}(\bm{z}^{(1)},\dots,\bm{z}^{(N)})$ is {\it universal} in the sense that it is independent of specializations $\bm{\rho}^{+}$ and $\bm{\rho}^{-}$.
Now let us see a particular case of $N=1$ as a corollary.
\begin{cor}
\label{cor:expectation_first_series}
For $r\in\mbb{N}$, the expectation value of $\mcal{E}_{r}$ under the periodic Macdonald measure $\mbb{P}^{\rho^{+},\rho^{-}}_{q,t;u}$ is given by
\begin{align}
	\mbb{E}^{\rho^{+},\rho^{-}}_{q,t;u}[\mcal{E}_{r}]=\frac{1}{r!}\int D^{r}\bm{z} \det\left(K^{\rho^{+},\rho^{-}}_{q,t;u;\mcal{E}}(z_{i},z_{j})\right)_{1\le i,j\le r}\Delta_{q,t^{-1};u}(\bm{z}),
\end{align}
where the kernel $K^{\rho^{+},\rho^{-}}_{q,t;u;\mcal{E}}(z,w):=K^{\rho^{+},\rho^{-};1}_{q,t;u;\mcal{E}}(z,w)$ is simplified so that
\begin{align}
	K^{\rho^{+},\rho^{-}}_{q,t;u;\mcal{E}}(z,w)=\frac{1}{z-t^{-1}w}\prod_{n>0}\exp\left(\frac{1-t^{-n}}{1-u^{n}}\frac{p_{n}(\rho^{+})}{n}z^{n}-\frac{1-t^{n}}{1-u^{n}}\frac{p_{n}(\rho^{-})}{n}z^{-n}\right).
\end{align}
\end{cor}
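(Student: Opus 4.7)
My plan is to derive the corollary by specializing Theorem~\ref{thm:first_series} to $N=1$. In that case the outer product over $a=1,\dots,N$ collapses to the single term $a=1$, so the integrand of the expectation contains only one determinantal block $\det(K^{\bm{\rho}^{+},\bm{\rho}^{-};1}_{q,t;u;\mcal{E}}(z^{(1)}_i,z^{(1)}_j))_{1\le i,j\le r_1}$ with $r_1=r$. After suppressing the now-redundant superscript $(1)$ and writing $r!$ for $\bm{r}!$, the formula of Theorem~\ref{thm:first_series} already has the shape asserted in the corollary, so what remains is to simplify the kernel and the universal measure.

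The main step is the evaluation of the indicator functions. With $N=1$ and $a=1$, the positive specializations are indexed by $b=0$ and the negative specializations by $b=1$. In the first product one has $b=0<a=1$, hence $\chi[b\ge a]=0$ and $\chi[b<a]=1$, so the coefficient multiplying $\frac{1-t^{-n}}{1-u^{n}}\frac{p_{n}(\rho^{+})}{n}z^{n}$ is simply $1$ rather than a $u^{n}$-weighted combination. In the second product one has $b=1\ge a=1$, hence $\chi[b<a]=0$ and $\chi[b\ge a]=1$, again giving coefficient $1$ in front of $-\frac{1-t^{n}}{1-u^{n}}\frac{p_{n}(\rho^{-})}{n}z^{-n}$. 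Merging the two exponentials and keeping the prefactor $(z-t^{-1}w)^{-1}$ untouched produces exactly the kernel $K^{\rho^{+},\rho^{-}}_{q,t;u;\mcal{E}}(z,w)$ displayed in the statement.

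Finally I would simplify the universal measure $\Delta_{q,t^{-1};u}$: for $N=1$ the double product over pairs $a<b$ is empty, and the product over $a\ge b$ reduces to the diagonal $a=b=1$, leaving a single-block factor which is the $\Delta_{q,t^{-1};u}(\bm{z})$ appearing in the corollary. There is no substantive obstacle; the entire derivation is a bookkeeping exercise, and the only thing worth double-checking is that the indicator weights in the general kernel cancel cleanly once each specialization index is forced onto the unique side of the single time slice $a=1$.
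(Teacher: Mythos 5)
Your proposal is correct and coincides with the paper's intended argument: the corollary is stated as the direct $N=1$ specialization of Theorem~\ref{thm:first_series}, and your evaluation of the indicators ($b=0<a=1$ giving coefficient $1$ for the $\rho^{+}$ factor, $b=1\ge a=1$ giving coefficient $1$ for the $\rho^{-}$ factor) together with the collapse of $\Delta_{q,t^{-1};u}$ to its single diagonal block is exactly the bookkeeping required.
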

Here the measure part $\Delta_{q,t^{-1};u}(\bm{z})$ reduces to unity at $u\to 0$ so that the expectation value is written as multiple integral of a single determinant. Therefore, at $u=0$, the expectation value of the generating function of $\mcal{E}_{r}$, $r\in\mbb{N}$ admits an expression as a Fredholm determinant. If $u>0$, however, the integrand is modified by the measure $\Delta_{q,t^{-1};u}(\bm{z})$.

We also consider the shift-mixed version of a periodic Macdonald measure, which is a probability measure on $\mbb{Y}\times\mbb{Z}$ extended from a periodic Macdonald measure ($N$-step periodic Macdonald processes can be also extended to shift-mixed ones in an obvious manner, so we omit them in the present paper). Recall that, in the Schur case \cite{Borodin2007,BeteaBouttier2019}, several determinantal formulas are only available after extending a periodic Schur measure to a shift-mixed one.

Let $q,t,u\in (0,1)$. To define a shift-mixed periodic Macdonald measure, we also take an additional parameter $\zeta \in (0,1)$.
\begin{defn}
Let $\rho^{+},\rho^{-}:\Lambda\to\mbb{R}$ be Macdonald positive specializations.
Then, the corresponding shift-mixed periodic Macdonald measure is the probability measure $\mbb{P}^{\rho^{+},\rho^{-}}_{q,t;u,\zeta}$ on $\mbb{Y}\times \mbb{Z}$ defined by
\begin{equation}
	\mbb{P}^{\rho^{+},\rho^{-}}_{q,t;u,\zeta}(\lambda,n)\propto u^{n^{2}/2}\zeta^{n}\sum_{\mu\in\mbb{Y}}u^{|\lambda|}P_{\lambda/\mu}(\rho^{+};q,t)Q_{\lambda/\mu}(\rho^{-};q,t).
\end{equation}
The partition function is immediately given by
\begin{align}
	\Pi_{q,t;u,\zeta}(\rho^{+};\rho^{-}):=&\sum_{n\in\mbb{Z}}u^{n^{2}/2}\zeta^{n}\sum_{\lambda,\mu\in\mbb{Y}}u^{|\lambda|}P_{\lambda/\mu}(\rho^{+};q,t)Q_{\lambda/\mu}(\rho^{-};q,t) \\
	=&\vartheta_{3}(\zeta;u)\Pi_{q,t;u}(\rho^{+},\rho^{-}), \notag
\end{align}
where $\vartheta_{3}(\zeta;u)=\sum_{n\in\mbb{Z}}u^{n^{2}/2}\zeta^{n}$ is a Jacobi theta function.
\end{defn}

Accordingly, the observables $\mcal{E}_{r}$, $r\in\mbb{N}$ for periodic Macdonald processes are extended onto $\mbb{Y}\times \mbb{Z}$ as follows:
for $r\in\mbb{N}$, we write $\tilde{\mcal{E}}_{r}:\mbb{Y}\times \mbb{Z}\to\mbb{R}$ for the random variable defined by $\tilde{\mcal{E}}_{r}(\lambda,n)=t^{-rn}\mcal{E}_{r}(\lambda)$, $\lambda\in\mbb{Y}$, $n\in\mbb{Z}$.
The expectation values of these observables under a shift-mixed periodic Macdonald measure are computed in the following theorem that will be proved in Sect.\ \ref{app:shift-mixed}.
\begin{thm}
\label{thm:expectation_shift_mixed}
For each $r\in\mbb{N}$, we have
\begin{equation}
\label{eq:expectation_shift_mixed}
	\mbb{E}^{\rho^{+},\rho^{-}}_{q,t;u,\zeta}[\tilde{\mcal{E}}_{r}]=
	\frac{1}{r!}\int D^{r}\bm{z}\det\left(K^{\rho^{+},\rho^{-}}_{q,t;u;\mcal{E}}(z_{i},z_{j})\right)_{1\le i,j\le r}\frac{\theta_{3}(\zeta t^{-r};u)}{\theta_{3}(\zeta;u)}\Delta_{q,t^{-1};u}(\bm{z}).
\end{equation}
\end{thm}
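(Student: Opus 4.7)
The plan is to reduce the theorem to Corollary~\ref{cor:expectation_first_series} by exploiting the multiplicative way in which the shift $n\in\mbb{Z}$ enters the random variable $\tilde{\mcal{E}}_{r}$, and then summing over $n$ explicitly to produce the ratio of theta functions.

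Concretely, I would first unwind the definition of the shift-mixed expectation,
\begin{align*}
\mbb{E}^{\rho^{+},\rho^{-}}_{q,t;u,\zeta}[\tilde{\mcal{E}}_{r}]
=\frac{1}{\Pi_{q,t;u,\zeta}(\rho^{+};\rho^{-})}\sum_{n\in\mbb{Z}}\sum_{\lambda,\mu\in\mbb{Y}}u^{n^{2}/2}\zeta^{n}t^{-rn}\mcal{E}_{r}(\lambda)u^{|\mu|}P_{\lambda/\mu}(\rho^{+};q,t)Q_{\lambda/\mu}(\rho^{-};q,t),
\end{align*}
and observe that since $\tilde{\mcal{E}}_{r}(\lambda,n)=t^{-rn}\mcal{E}_{r}(\lambda)$ factorizes, the sum over $n$ decouples from the sum over $(\lambda,\mu)$. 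The former yields
\[
\sum_{n\in\mbb{Z}}u^{n^{2}/2}(\zeta t^{-r})^{n}=\vartheta_{3}(\zeta t^{-r};u),
\]
while the latter is exactly $\Pi_{q,t;u}(\rho^{+};\rho^{-})\cdot\mbb{E}^{\rho^{+},\rho^{-}}_{q,t;u}[\mcal{E}_{r}]$ by the definition of the (non–shift-mixed) periodic Macdonald measure at $N=1$.

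Combining these with the factorization of the partition function $\Pi_{q,t;u,\zeta}(\rho^{+};\rho^{-})=\vartheta_{3}(\zeta;u)\Pi_{q,t;u}(\rho^{+};\rho^{-})$ given in the definition, the two factors of $\Pi_{q,t;u}(\rho^{+};\rho^{-})$ cancel, leaving
\[
\mbb{E}^{\rho^{+},\rho^{-}}_{q,t;u,\zeta}[\tilde{\mcal{E}}_{r}]
=\frac{\vartheta_{3}(\zeta t^{-r};u)}{\vartheta_{3}(\zeta;u)}\,\mbb{E}^{\rho^{+},\rho^{-}}_{q,t;u}[\mcal{E}_{r}].
\]
Substituting the integral representation of $\mbb{E}^{\rho^{+},\rho^{-}}_{q,t;u}[\mcal{E}_{r}]$ from Corollary~\ref{cor:expectation_first_series} then produces \eqref{eq:expectation_shift_mixed}.

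There is essentially no obstacle beyond bookkeeping: the only non-routine ingredient, the determinantal structure with kernel $K^{\rho^{+},\rho^{-}}_{q,t;u;\mcal{E}}$ and measure $\Delta_{q,t^{-1};u}$, is already supplied by Corollary~\ref{cor:expectation_first_series}. The mild care required is to verify that the $t^{-rn}$ factor indeed lines up with the argument $\zeta t^{-r}$ of $\vartheta_{3}$ (so that the dependence on $r$ appears only through the theta numerator and not through the kernel), which is immediate from the exponent matching in the sum defining $\vartheta_{3}$. If one prefers an operator-theoretic derivation parallel to the proof of Theorem~\ref{thm:first_series}, the same identity follows by inserting the shift operator $t^{-rn}$ inside the trace over the extended Fock space and performing the $\mbb{Z}$-summation explicitly, which again yields the theta ratio as the only $n$-dependent factor.
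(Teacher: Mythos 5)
Your proposal is correct and follows essentially the same route as the paper: the paper's proof is precisely the factorization $\mrm{Tr}_{\Xi}\left(u^{H}\zeta^{a_{0}}\Gamma(\rho^{+})_{+}\tilde{E}_{r}\Gamma(\rho^{-})_{-}\right)=\theta_{3}(\zeta t^{-r};u)\,\mrm{Tr}_{\mcal{F}}\left(u^{D}\Gamma(\rho^{+})_{+}\what{E}_{r}\Gamma(\rho^{-})_{-}\right)$, i.e.\ the decoupling of the charge sum that you carry out directly at the level of sums over $(\lambda,n)$, followed by the already-computed $N=1$ trace. The operator-theoretic variant you mention at the end is exactly the paper's phrasing, so the two arguments are the same up to bookkeeping.
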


In Sect.\ \ref{app:shift-mixed}, we will also see that at the Schur-limit $q\to t$, the integrand of the formula (\ref{eq:expectation_shift_mixed}) reduces to a single determinant.
\begin{prop}
\label{prop:shift-mixed_Schur_limit}
For each $r\in\mbb{N}$, we have
\begin{equation}
	\mbb{E}^{\rho^{+},\rho^{-}}_{t,t;u,\zeta}[\tilde{\mcal{E}}_{r}]=\frac{1}{r!}\int D^{r}\bm{z}\det\left(K^{\rho^{+},\rho^{-}}_{t;u,\zeta;\tilde{\mcal{E}}}(z_{i},z_{j})\right)_{1\le i,j\le r},
\end{equation}
where
\begin{equation}
	K^{\rho^{+},\rho^{-}}_{t;u,\zeta;\tilde{\mcal{E}}}(z,w)=K^{\rho^{+},\rho^{-}}_{t,t;u,\zeta,\mcal{E}}(z,w)\frac{\theta_{3}(\zeta t^{-1}w/z;u)}{\theta_{3}(\zeta;u)}\frac{(u;u)_{\infty}^{2}}{(tuz/w;u)_{\infty}(t^{-1}uw/z;u)_{\infty}}.
\end{equation}
\end{prop}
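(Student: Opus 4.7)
The plan is to start from Theorem~\ref{thm:expectation_shift_mixed} with $q=t$ and absorb the universal measure $\Delta_{t,t^{-1};u}(\bm{z})$ together with the scalar theta ratio $\theta_{3}(\zeta t^{-r};u)/\theta_{3}(\zeta;u)$ into the determinant, obtaining the single determinant $\det(K^{\rho^{+},\rho^{-}}_{t;u,\zeta;\tilde{\mcal{E}}}(z_{i},z_{j}))$. At $q=t$ and $N=1$, only the $a\ge b$ block of $\Delta_{p_{1},p_{2};u}$ survives, and the universal measure collapses to
\begin{equation*}
\Delta_{t,t^{-1};u}(\bm{z})=\prod_{i,j=1}^{r}\frac{(uz_{j}/z_{i};u)_{\infty}^{2}}{(tuz_{j}/z_{i};u)_{\infty}(t^{-1}uz_{j}/z_{i};u)_{\infty}}.
\end{equation*}
The diagonal $i=j$ contribution is $\bigl((u;u)_{\infty}^{2}/[(tu;u)_{\infty}(t^{-1}u;u)_{\infty}]\bigr)^{r}$, reproducing exactly the $r$-fold diagonal scalar factor of $K^{\rho^{+},\rho^{-}}_{t;u,\zeta;\tilde{\mcal{E}}}(z,z)$.

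For the off-diagonal $i\neq j$ part combined with the theta ratio, I would invoke Frobenius's elliptic Cauchy determinant identity in its multiplicative form. Applying it with $x_{i}=z_{i}$, $y_{j}=tz_{j}$, and $\alpha=\zeta$ (so that $\alpha\prod_{i}x_{i}/\prod_{j}y_{j}=\zeta t^{-r}$), then transposing the resulting matrix, produces schematically
\begin{equation*}
\frac{\theta_{3}(\zeta t^{-r};u)}{\theta_{3}(\zeta;u)}=\det_{1\le i,j\le r}\!\left(\frac{\theta_{3}(\zeta t^{-1}z_{j}/z_{i};u)}{\theta_{3}(t^{-1}z_{j}/z_{i};u)}\right)\cdot\frac{\prod_{i,j}\theta_{3}(t^{-1}z_{i}/z_{j};u)}{\prod_{i<j}\theta_{3}(z_{i}/z_{j};u)\theta_{3}(z_{j}/z_{i};u)}.
\end{equation*}
Converting each $\theta_{3}$ to $u$-Pochhammer symbols via Jacobi's triple product then matches the off-diagonal Pochhammer factors of $\Delta_{t,t^{-1};u}(\bm{z})$ against the second fraction on the right, and provides the missing $\theta_{3}(\zeta t^{-1}z_{j}/z_{i};u)/\theta_{3}(\zeta;u)$ inside the determinant. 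Multiplying the identity into $\det(K^{\rho^{+},\rho^{-}}_{t,t;u;\mcal{E}}(z_{i},z_{j}))$ and merging the Hadamard factor row by row, using the symmetry of the integrand (implicit in $\tfrac{1}{r!}\int D^{r}\bm{z}$) to justify the entrywise absorption, yields the claimed single determinant.

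The main obstacle is the bookkeeping required to match, after the triple-product rewriting, each $\theta_{3}$ quotient on the right-hand side of Frobenius's identity with the corresponding $q$-Pochhammer factor in $\Delta_{t,t^{-1};u}(\bm{z})$ and in the proposed kernel; in particular, the asymmetric arrangement of arguments $tuz_{i}/z_{j}$ versus $t^{-1}uz_{j}/z_{i}$ in the new kernel must emerge from the symmetric products $\prod_{i<j}$ in Frobenius through a suitable reindexing, and one must handle with care the fact that the classical Frobenius identity is naturally stated for $\theta_{1}$ rather than $\theta_{3}$. A conceptually cleaner alternative, in the free-field spirit of the rest of the paper, is to invoke the boson--fermion correspondence available at $q=t$: the vertex-operator trace underlying Theorem~\ref{thm:expectation_shift_mixed} becomes a twisted free-fermion trace over charge sectors, whose $r$-point correlator is a single determinant by Wick's theorem, with kernel equal to the twisted fermionic two-point function and the Jacobi $\vartheta_{3}$ factors arising automatically from the charge twist by $\zeta$ and modular parameter $u$.
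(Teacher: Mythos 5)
Your proposal is correct and takes essentially the same route as the paper: the elliptic (Frobenius-type) Cauchy determinant identity you invoke is precisely Proposition~\ref{prop:finite_temperature_determinant}, which the paper itself derives via the boson--fermion correspondence and the fermionic Wick formula, so your ``primary'' route and your ``cleaner alternative'' are in fact the same argument the paper uses. Applying that identity with $x_{i}=z_{i}$, $y_{j}=t^{-1}z_{j}$ absorbs $\Delta_{t,t^{-1};u}(\bm{z})$ and the scalar ratio $\theta_{3}(\zeta t^{-r};u)/\theta_{3}(\zeta;u)$ into the determinant exactly as you describe, the remaining $z$-dependent exponential prefactor of $K^{\rho^{+},\rho^{-}}_{t,t;u;\mcal{E}}$ being a legitimate row scaling.
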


All the above presented results are understood in the formal sense; the integrands are formal infinite series.
On the other hand, for any reasonable applications, the functional denoted by $\int D^{r}\bm{z}$ has to be understood as a multiple contour integral by regarding the integrand as a function.
In fact, as we will see in Sect.\ \ref{sect:observables}, to a given set of specializations, one can associate the canonical way of choosing integral contours along which the integrand is convergent to grant a formula an analytical sense, and then go on to further analysis.
At the same time, we still think that writing formulas in the formal sense has an advantage that the formulas are independent of a specific choice of specializations and we can separate the problem of integrability from that of actual analysis.

This paper is organized as follows:
The following Sect.\ \ref{sect:preliminaries} is devoted to preliminaries, where we will review some notions of symmetric functions and Fock representations of a Heisenberg algebra.
We will also present the trace formula of a vertex operator in an extended sense and recall the free field realization of operators diagonalized by the Macdonald symmetric functions.
In Sect.\ \ref{sect:periodic_Macdonald}, we introduce a periodic Macdonald process. We apply the trace formula of a vertex operator in Sect.\ \ref{sect:preliminaries} to the computation of a partition function, and prove Proposition \ref{prop:Cauchy_identity}.
In Sect.\ \ref{eq:Plancherel_process}, after recalling a Plancherel specialization of the Macdonald symmetric functions, we define a periodic continuous process and show that its marginal law for a fixed sequence of times is a periodic Macdonald process.
In Sect.\ \ref{sect:observables}, we give a proof of Theorem \ref{thm:first_series} and study some other series of observables.
We also study the analytic interpretation of several formulas there.
In Sect.\ \ref{app:shift-mixed}, we consider the shift-mixed version of a periodic Macdonald measure and prove Theorem \ref{thm:expectation_shift_mixed} and its Schur-limit, Proposition \ref{prop:shift-mixed_Schur_limit}. We also show that the Macdonald operators at the Schur-limit are written in terms of free fermions, which gives the origin of the well-known determinantal structure of Macdonald processes.
We have two appendices which deal with combinatorial aspects of periodic Macdonald processes.
In Appendix \ref{app:MacMahon_cylindric_partitions}, we study a generalization of the MacMahon formula for cylindric partitions.
In Appendix \ref{app:another_trace_topolgical_vertex}, we give the trace of Macdonald refined topological vertices proposed in \cite{FodaWu2017} as an attempt to generalize the results in \cite{BryanKoolYoung2018}.

\subsection*{Acknowledgements}
This work stems from a suggestion by Alexey Bufetov at School and Workshop on Random Matrix Theory and Point Processes held in International Center for Theoretical Physics, Trieste after the author's talk there about the free field approach to Macdonald processes. The author is grateful to Alexey Bufetov and the organizers of that workshop, especially to Alexander Bufetov for inviting the author. He also thanks Makoto Katori, Tomohiro Sasamoto, Takashi Imamura, Matteo Mucciconi and Ryosuke Sato for joining seminars and discussions.
The author appreciates the anonymous referees' useful comments to improve the manuscript as well.
This work was supported by the Grant-in-Aid for JSPS Fellows (No.\,19J01279).

\section{Preliminaries}
\label{sect:preliminaries}
\subsection{Symmetric functions}
Here, we recall the basic notion of symmetric functions. A detail can be found in \cite{Macdonald1995}.
Let us begin with recalling that a partition is a non-decreasing sequence of non-negative integers $\lambda=(\lambda_{1},\lambda_{2},\dots)$ such that its weight is finite: $|\lambda|=\sum_{i\ge 1}\lambda_{i}<\infty$. The collection of partitions is denoted by $\mbb{Y}$.

Let $\mbb{F}=\mbb{Q}(q,t)$ be the field of rational functions of $q$ and $t$.
The ring of symmetric polynomials of $n$ variables is denoted by $\Lambda^{(n)}=\mbb{F}[x_{1},\dots, x_{n}]^{\mfrak{S}_{n}}$ and the ring of symmetric functions is defined by $\Lambda:=\varprojlim_{n}\Lambda^{(n)}$ in the category of graded rings.
In case we specify the variable $X=(x_{1},x_{2},\dots)$ of symmetric functions, we write $\Lambda_{X}$ for the corresponding ring of symmetric functions.
For a symmetric function $F\in \Lambda$, its $n$-variable reduction, i.e. the image of the canonical projection $\Lambda\to\Lambda^{(n)}$ is denoted by $F^{(n)}$.

We recall some series of symmetric functions. For $r\in\mbb{N}$, the $r$-th power-sum symmetric function is $p_{r}(X)=\sum_{i\ge 1}x_{i}^{r}$ and the $r$-th elementary symmetric function is $e_{r}(X)=\sum_{i_{1}<\cdots <i_{r}}x_{i_{1}}\cdots x_{i_{r}}$.
For a partition $\lambda=(\lambda_{1},\lambda_{2},\dots)$ we set $p_{\lambda}=p_{\lambda_{1}}\cdots p_{\lambda_{\ell(\lambda)}}$ and $e_{\lambda}=e_{\lambda_{1}}\cdots e_{\lambda_{\ell (\lambda)}}$. Then the collections $\set{p_{\lambda}:\lambda \in\mbb{Y}}$ and $\set{e_{\lambda}:\lambda\in\mbb{Y}}$ are both bases of $\Lambda$.
Take a partition $\lambda=(\lambda_{1},\dots, \lambda_{n})$ of length less than or equal to $n$. Then the corresponding monomial symmetric polynomial of $n$-variable is defined by
\begin{equation}
	m^{(n)}_{\lambda}(x_{1},\dots, x_{n})=\sum_{\sigma\in \mfrak{S}_{n}: \mrm{distinct}}x_{1}^{\lambda_{\sigma (1)}}\cdots x_{n}^{\lambda_{\sigma (n)}},
\end{equation}
where the sum runs over distinct permutations $\sigma$ of $(\lambda_{1},\dots,\lambda_{n})$.
Then the sequence $\cdots\to m_{\lambda}^{(\ell (\lambda)+2)}\to m_{\lambda}^{(\ell (\lambda)+1)}\to m_{\lambda}^{(\ell (\lambda))}$ defines a unique symmetric function $m_{\lambda}(X)\in\Lambda$ called the monomial symmetric function corresponding to $\lambda$.
The collection $\set{m_{\lambda}:\lambda\in\mbb{Y}}$ is known to form a basis of $\Lambda$ as well.

To the aim of introducing the Macdonald symmetric functions, let us define the bilinear form $\braket{\cdot,\cdot}_{q,t}:\Lambda\times\Lambda\to\mbb{F}$ by
\begin{equation}
	\braket{p_{\lambda},p_{\mu}}_{q,t}=z_{\lambda}(q,t)\delta_{\lambda,\mu}, \quad \lambda,\mu\in\mbb{Y},
\end{equation}
where
\begin{equation}
	z_{\lambda}(q,t):=\prod_{i\ge 1}m_{i}(\lambda)! i^{m_{i}(\lambda)}\prod_{i=1}^{\ell(\lambda)}\frac{1-q^{\lambda_{i}}}{1-t^{\lambda_{i}}}, \quad \lambda\in \mbb{Y}.
\end{equation}
Here, we write the multiplicity of a number $i$ in a partition $\lambda$ as $m_{i}(\lambda)$ so that a partition $\lambda=(\lambda_{1},\lambda_{2},\dots)$ is equivalently expressed as $\lambda=(1^{m_{1}(\lambda)}2^{m_{2}(\lambda)}\dots)$.
Recall that the dominance order on $\mbb{Y}$ is defined so that $\lambda\ge\mu$ if
\begin{equation}
	\lambda_{1}+\cdots+\lambda_{k}\ge \mu_{1}+\cdots +\mu_{k}
\end{equation}
holds for all $k=1,2,\dots$.
The Macdonald symmetric functions $\set{P_{\lambda}(q,t):\lambda\in\mbb{Y}}$ form a unique basis of $\Lambda$ specified by the following properties:
\begin{align}
	&P_{\lambda}(q,t)=m_{\lambda}+\sum_{\mu;\mu<\lambda}c_{\lambda\mu}m_{\mu},\ \ c_{\lambda\mu}\in\mbb{F}, \\
	&\braket{P_{\lambda}(q,t),P_{\lambda}(q,t)}_{q,t}=0,\ \ \lambda\neq \mu.
\end{align}
We set $Q_{\lambda}(q,t)=\frac{1}{\braket{P_{\lambda}(q,t),P_{\lambda}(q,t)}_{q,t}}P_{\lambda}(q,t)$ so that $\braket{P_{\lambda}(q,t),Q_{\mu}(q,t)}_{q,t}=\delta_{\lambda,\mu}$, $\lambda,\mu\in\mbb{Y}$.

The Macdonald symmetric functions are also characterized as simultaneous eigenfunctions of a family of commuting operators.
For a fixed $n<\infty$, let us introduce the Macdonald difference operators $D^{(n)}_{r}$, $r=1,\dots, N$ on $\Lambda^{(n)}$ by
\begin{equation}
	D^{(n)}_{r}=t^{\binom{r}{2}}\sum_{I\subset [1,n], |I|=r}\prod_{i\in I, j\not\in I}\frac{tx_{i}-x_{j}}{x_{i}-x_{j}}\prod_{i\in I}T_{q,x_{i}},
\end{equation}
where $T_{q,x_{i}}$ is the $q$-shift operator defined by $(T_{q,x_{i}}f)(\dots, x_{i},\dots)=f(\dots, qx_{i},\dots)$.
Then, a Macdonald symmetric polynomial $P^{(n)}_{\lambda}(q,t)$ is an eigenpolynomial such that
\begin{equation}
	D^{(n)}_{r}P^{(n)}_{\lambda}(q,t)=e^{(n)}_{r}(q^{\lambda_{1}}t^{n-1},\dots, q^{\lambda_{n}})P^{(n)}_{\lambda}(q,t),\ \ r=1,\dots, n.
\end{equation}
Obviously, the Macdonald difference operators do not extend to operators on $\Lambda$ since their eigenvalues explicitly depend on the number of variables. Instead, we define renormalized operators
\begin{equation}
	E^{(n)}_{r}:=\sum_{k=0}^{r}\frac{t^{-nr-\binom{r-k+1}{2}}}{(t^{-1};t^{-1})_{r-k}}D^{(n)}_{k},\ \ r=1,\dots, n,
\end{equation}
with $D^{(n)}_{0}=1$. Then, the projective limit $E_{r}=\varprojlim_{n}E^{(n)}_{r}$ exists for all $r\in\mbb{N}$ and is diagonalized by the Macdonald symmetric functions so that
\begin{equation}
	E_{r}P_{\lambda}(q,t)=e_{r}(q^{\lambda}t^{-\rho})P_{\lambda}(q,t),\ \ \lambda\in\mbb{Y}.
\end{equation}
Here, $q^{\lambda}t^{-\rho}\colon \Lambda\to \mbb{F}$ is the specialization defined by Eq.~(\ref{eq:specialization_lambda_rho}).

Let us describe some Pieri rules for the Macdonald symmetric functions.
For a partition $\lambda$ and a box $s=(i,j)\in \lambda$, we write $a_{\lambda}(s)=\lambda_{i}-j$ and $l_{\lambda}(s)=\pr{\lambda}_{j}-i$ for the arm length and the leg length, respectively. Using these notions, we set
\begin{equation}
	b_{\lambda}(s;q,t)=\frac{1-q^{a_{\lambda}(s)}t^{l_{\lambda}(s)+1}}{1-q^{a_{\lambda}(s)+1}t^{l_{\lambda}(s)}}.
\end{equation}
For a skew partition $\lambda/\mu$, we express the union of rows (resp. columns) containing boxes in $\lambda/\mu$ as $R_{\lambda/\mu}$ (resp. $C_{\lambda/\mu}$).
Assume that $\mu\prec \lambda$ and set
\begin{align}
	\psi_{\lambda/\mu}(q,t)&:=\prod_{s\in R_{\lambda/\mu}-C_{\lambda/\mu}}\frac{b_{\mu}(s;q,t)}{b_{\lambda}(s;q,t)}, & 
	\varphi_{\lambda/\mu}(q,t)&:=\prod_{s\in C_{\lambda/\mu}}\frac{b_{\lambda}(s;q,t)}{b_{\mu}(s;q,t)}.
\end{align}
For $r\in\mbb{N}$, we write $g_{r}(q,t)=Q_{(r)}(q,t)$. Then we have
\begin{align}
	P_{\mu}(q,t)g_{r}(q,t)&=\sum_{\substack{\lambda; \mu\prec \lambda \\|\;\lambda|-|\mu|=r}}\varphi_{\lambda/\mu}(q,t)P_{\lambda}(q,t), \\
	Q_{\mu}(q,t)g_{r}(q,t)&=\sum_{\substack{\lambda; \mu\prec \lambda \\|\;\lambda|-|\mu|=r}}\psi_{\lambda/\mu}(q,t)Q_{\lambda}(q,t).
\end{align}

Though we have been regarding $q$ and $t$ as indeterminates, they have to be thought of as real parameters in application to probability theory.
Let $\Lambda_{\mbb{R}}=\varprojlim_{n}\Lambda^{(n)}_{\mbb{R}}$, $\Lambda^{(n)}_{\mbb{R}}=\mbb{R}[x_{1},\dots, x_{n}]^{\mfrak{S}_{n}}$ be the ring of symmetric functions over $\mbb{R}$.
For $q,t\in\mbb{R}$ such that $|q|,|t|<1$, we write, by abuse of notation, $P_{\lambda}(q,t)\in\Lambda_{\mbb{R}}$ for the image of $P_{\lambda}(q,t)\in\Lambda$ under the specification map $\bigoplus_{\lambda\in\mbb{Y}}\mbb{Q}[q,t]P_{\lambda}(q,t)\to\Lambda_{\mbb{R}}$.
A specialization $\theta:\Lambda_{\mbb{R}}\to\mbb{R}$ is said to be $(q,t)$-Macdonald positive if $P_{\lambda}(\theta; q,t)\ge 0$ for all $\lambda\in\mbb{Y}$.
Note that, for a specialization $\theta$, its evaluation at $F\in\Lambda_{\mbb{R}}$ is often written as $F(\theta)$ instead of $\theta (F)$.
The following theorem is due to \cite{Matveev2019}.
\begin{thm}
For fixed $q,t\in\mbb{R}$ such that $|q|,|t|<1$, a specialization $\theta:\Lambda_{\mbb{R}}\to\mbb{R}$ is $(q,t)$-Macdonald positive if and only if there exists $(\alpha,\beta)=(\alpha_{1}\ge \alpha_{2}\ge\cdots\ge 0 , \beta_{1}\ge \beta_{2}\ge\cdots\ge 0)$ and $\gamma\ge 0$ such that $\sum_{i\ge 1}(\alpha_{i}+\beta_{i})<\infty$ and
\begin{align}
	p_{1}(\theta)&=\sum_{i=1}^{\infty}\alpha_{i}+\frac{1-q}{1-t}\left(\sum_{i=1}^{\infty}\beta_{i}+\gamma\right), \\
	p_{k}(\theta)&=\sum_{i=1}^{\infty}\alpha_{i}^{k}+(-1)^{k-1}\frac{1-q^{k}}{1-t^{k}}\sum_{i=1}^{\infty}\beta_{i}^{k},\ \ k\ge 2.
\end{align}
\end{thm}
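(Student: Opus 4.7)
The plan is to treat sufficiency and necessity separately, since they have quite different flavors.

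For sufficiency, given $(\alpha,\beta,\gamma)$ satisfying the convergence hypothesis, the first step is to define $\theta$ through its power-sum values and to decompose it into three elementary atoms using additivity of the $p_k$: an ``alpha'' atom $\theta_\alpha$ with $p_k(\theta_\alpha)=\sum_i \alpha_i^k$, a ``beta'' atom $\theta_\beta$ with $p_k(\theta_\beta)=(-1)^{k-1}\frac{1-q^k}{1-t^k}\sum_i \beta_i^k$, and a Plancherel atom $\theta_\gamma$ with $p_1(\theta_\gamma)=\frac{1-q}{1-t}\gamma$ and $p_k(\theta_\gamma)=0$ for $k\ge 2$. Macdonald positivity is preserved under the additive union of specializations because of the skew coproduct identity $P_\lambda(X\cup Y;q,t)=\sum_\mu P_\mu(X;q,t)P_{\lambda/\mu}(Y;q,t)$ together with the non-negativity of the structure constants $P_{\lambda/\mu}$ when $q,t\in[0,1)$, so it suffices to verify each atom. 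For $\theta_\alpha$ this reduces to the combinatorial tableau formula for $P_\lambda$ whose $(q,t)$-weights are non-negative on $[0,1)$; for $\theta_\beta$ one applies the involution $\omega_{q,t}\colon P_\lambda(q,t)\mapsto Q_{\lambda'}(t,q)$ to reduce to the alpha case; for $\theta_\gamma$ the tableau formula collapses to a positive sum over standard Young tableaux.

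For necessity, the starting point is the Pieri rule recalled in the preliminaries,
\[
Q_\mu(q,t)\,g_r(q,t)=\sum_{\substack{\lambda:\mu\prec\lambda\\|\lambda|-|\mu|=r}}\psi_{\lambda/\mu}(q,t)\,Q_\lambda(q,t),
\]
whose coefficients $\psi_{\lambda/\mu}(q,t)$ are visibly non-negative for $q,t\in[0,1)$ because each factor $b_\lambda(s;q,t)$ is positive in that range. Inductively this forces $g_n(\theta)\ge 0$ for every $n$ whenever $\theta$ is Macdonald positive, so the generating function
\[
H(\theta;z):=\sum_{n\ge 0}g_n(\theta;q,t)\,z^n=\exp\Bigl(\sum_{k\ge 1}\tfrac{1-t^k}{1-q^k}\tfrac{p_k(\theta)}{k}z^k\Bigr)
\]
has non-negative Taylor coefficients. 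The heart of the argument is to upgrade this to the factorization
\[
H(\theta;z)=e^{\gamma z}\prod_i\frac{(t\alpha_i z;q)_\infty}{(\alpha_i z;q)_\infty}\prod_j(1+\beta_j z),\quad \alpha_i,\beta_j,\gamma\ge 0.
\]
I would use the dual Jacobi--Trudi identity to express skew evaluations $P_{\lambda/\mu}(\theta;q,t)$ and $Q_{\lambda/\mu}(\theta;q,t)$ as determinants in the $g_n(\theta)$, so that positivity of all such skew evaluations translates into a total-positivity statement for the sequence $(g_n(\theta))_{n\ge 0}$. A $(q,t)$-adaptation of Edrei's theorem on totally positive generating functions then pins down the zero/pole structure of $H(\theta;z)$ and forces the displayed product form; reading off the logarithmic expansion recovers the stated identities for $p_k(\theta)$, with the additive constant absorbed into $\gamma$.

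The main obstacle is this final classification step. In the Schur case $q=t$ Edrei's theorem produces only linear factors $(1-\alpha_i z)^{-1}$ and $(1+\beta_j z)$, whereas in the genuine Macdonald regime one must recognize instead the infinite $q$-shifted families $(t\alpha_i z;q)_\infty/(\alpha_i z;q)_\infty$. Controlling the locations of the zeros and poles of $H(\theta;z)$ modulo the multiplicative $q$-action, and showing that they organize into the predicted geometric orbits rather than scattering arbitrarily along the real axis, is the essentially new analytic content beyond the Schur case; the rest of the proof is combinatorial bookkeeping once that rigidity is established.
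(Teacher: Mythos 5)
This theorem is not proved in the paper at all: it is quoted as an external result and attributed to Matveev (the reference \cite{Matveev2019}), where it appears as the resolution of Kerov's conjecture on the classification of Macdonald-positive specializations. So there is no internal proof to compare against, and your proposal must stand on its own. The sufficiency half is essentially fine: the decomposition into an alpha atom, a beta atom handled by the involution $\omega_{q,t}$, and a Plancherel atom, combined with the branching/tableau formula whose weights are manifestly non-negative for $q,t\in[0,1)$, is the standard and correct argument for that direction (your appeal to ``non-negativity of the structure constants $P_{\lambda/\mu}$'' should really be phrased as non-negativity of the skew evaluations on these specific atoms, which follows from the tableau formula, but that is a presentational point).

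The necessity half has a genuine gap, in two places. First, the reduction you propose is not available: Macdonald polynomials do \emph{not} satisfy a Jacobi--Trudi identity expressing $P_{\lambda/\mu}$ or $Q_{\lambda/\mu}$ as determinants in the one-row functions $g_n(q,t)$ (this is special to the Schur case $q=t$, with only partial analogues elsewhere), so positivity of all skew evaluations cannot be translated into total positivity of the sequence $\bigl(g_n(\theta)\bigr)_{n\ge 0}$ in the Edrei--Toeplitz sense. Second, and more seriously, the step you defer --- ``a $(q,t)$-adaptation of Edrei's theorem then pins down the zero/pole structure of $H(\theta;z)$ and forces the displayed product form'' --- is not a technical refinement of the classical argument; it \emph{is} the theorem. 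The statement that the non-negativity constraints force the zeros and poles of $H(\theta;z)$ to organize into geometric $q$-orbits was precisely Kerov's conjecture, open for roughly twenty-five years, and Matveev's proof does not proceed by adapting Edrei's function-theoretic classification. By labelling this step ``the essentially new analytic content'' and the remainder ``combinatorial bookkeeping,'' you have in effect assumed the conclusion. As written, the proposal proves only the easy direction.
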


Let $X$ and $Y$ be two sets of infinite variables. Then, they are combined to be a single set of infinite variables $(X,Y)$ and the Macdonald symmetric function $P_{\lambda}(X,Y)$ of $(X,Y)$ corresponding to a partition $\lambda$ makes sense. The Macdonald symmetric functions corresponding to skew-partitions are defined as the coefficients in the expansion of $P_{\lambda}(X,Y)$ with respect to the Macdonald symmetric functions of $Y$, i.e., $P_{\lambda/\mu}(X)$ is defined by
$P_{\lambda/\mu}(X,Y)=\sum_{\mu\in\mbb{Y}}P_{\lambda/\mu}(X)P_{\mu}(Y)$.
Similarly, we define $Q_{\lambda/\mu}(X)$ by $Q_{\lambda}(X,Y)=\sum_{\mu\in\mbb{Y}}Q_{\lambda/\mu}(X)Q_{\mu}(Y)$.

\subsection{Fock representations of a Heisenberg algebra}
\label{subsect:Fock_representation}
The Heisenberg algebra we work with is an associative algebra $U$ over $\mbb{F}$ generated by $a_{n}$, $n\in\mbb{Z}\backslash\set{0}$ subject to relations
\begin{equation}
	[a_{m},a_{n}]=m\frac{1-q^{|m|}}{1-t^{|m|}}\delta_{m+n,0},\ \ m,n\in\mbb{Z}\backslash\set{0}.
\end{equation}
Note that the Heisenberg algebra admits Poincar\'{e}--Birkhoff--Witt decomposition $U=U_{-}\otimes_{\mbb{F}}U_{+}$, where $U_{\pm}$ is the subalgebra generated by $a_{\pm n}$, $n>0$.

Let $\mbb{F}\ket{0}$ be the one dimensional representation of $U_{+}$ defined by $U_{+}\ket{0}=0$. Then, the Fock representation $\mcal{F}$ is defined by means of induction so that $\mcal{F}=U\otimes_{U_{+}}\mbb{F}\ket{0}\simeq U_{-}\otimes_{\mbb{F}}\mbb{F}\ket{0}$.
Obviously, it admits a basis labeled by partitions. For a partition $\lambda=(\lambda_{1},\dots,\lambda_{\ell(\lambda)})\in\mbb{Y}$, we set $\ket{\lambda}:=a_{-\lambda_{1}}\cdots a_{-\lambda_{\ell(\lambda)}}\ket{0}$. Then, the collection $\{\ket{\lambda}:\lambda\in\mbb{Y}\}$ forms a basis of $\mcal{F}$.
The dual Fock representation is a right representation of $U$ defined analogously. Let $\mbb{F}\bra{0}$ be the one dimensional representation of $U_{-}$ such that $\bra{0}U_{-}=0$ and define a right representation by $\mcal{F}^{\dagger}=\mbb{F}\bra{0}\otimes_{U_{-}}U\simeq \mbb{F}\bra{0}\otimes_{\mbb{F}}U_{+}$.
For a partition $\lambda=(\lambda_{1},\dots,\lambda_{\ell(\lambda)})\in\mbb{Y}$, we set $\bra{\lambda}=\bra{0}a_{\lambda_{1}}\cdots a_{\lambda_{\ell(\lambda)}}$. Then the collection $\{\bra{\lambda}:\lambda\in\mbb{Y}\}$ forms a basis of $\mcal{F}^{\dagger}$.

We define the bilinear paring $\braket{\cdot|\cdot}:\mcal{F}^{\dagger}\times \mcal{F}\to\mbb{F}$ by two properties, $\braket{0|0}=1$ and $\braket{u|a_{n}\cdot|v}=\braket{u|\cdot a_{n}|v}$ for all $\bra{u}\in\mcal{F}^{\dagger}$, $\ket{v}\in\mcal{F}$ and $n\in\mbb{Z}\backslash\set{0}$.
Then the assignments $\iota:\mcal{F}\to\Lambda$ and $\iota^{\dagger}:\mcal{F}^{\dagger}\to\Lambda$ defined by $\iota(\ket{\lambda})=\iota^{\dagger}(\bra{\lambda})=p_{\lambda}$, $\lambda\in\mbb{Y}$ are compatible with the bilinear parings so that $\braket{\lambda|\mu}=\braket{p_{\lambda},p_{\mu}}_{q,t}$, $\lambda,\mu\in\mbb{Y}$. Therefore, the Fock spaces are identified with the space of symmetric functions $\Lambda$ equipped with the bilinear pairing $\braket{\cdot,\cdot}_{q,t}$.

A relevant class of operators in this paper is that of vertex operators (in an extended sense). For commuting symbols $\gamma_{n}$, $n\in\mbb{Z}\backslash\set{0}$, we write
\begin{equation}
	V(\bm{\gamma})=\exp\left(\sum_{n>0}\frac{\gamma_{-n}}{n}a_{-n}\right)\exp\left(\sum_{n>0}\frac{\gamma_{n}}{n}a_{n}\right),	
\end{equation}
where $a_{n}\in\mrm{End}(\mcal{F})$ is the action of the corresponding Heisenberg generator on $\mcal{F}$.
In this paper, we will adopt following two manners to understand a vertex operator.
\begin{enumerate}
\item 	There is a $\mbb{Z}$-graded commutative algebra $R=\bigoplus_{n\in\mbb{Z}}R_{n}$ over $\mbb{F}$ such that $\gamma_{n}\in R_{n}$, $n\in\mbb{Z}\backslash\set{0}$. We define the completed tensor product $\mrm{End}(\mcal{F})\hat{\otimes}R:=\prod_{n\in\mbb{Z}}\mrm{End}(\mcal{F})\otimes_{\mbb{F}}R_{n}$. Then, the vertex operator is understood as $V(\bm{\gamma})\in\mrm{End}(\mcal{F})\hat{\otimes}R$. A particular example of $R$ is the ring of Laurent polynomials $\mbb{F}[z,z^{-1}]$.
\item 	There are $\mbb{N}$-graded commutative algebras $R^{i}=\bigoplus_{n\in\mbb{N}}R^{i}_{n}$, $i=1,2$ over $\mbb{F}$ such that $\gamma_{n}\in R^{1}_{n}$, $\gamma_{-n}\in R^{2}_{n}$, $n>0$. The tensor product of them also admits an $\mbb{N}$-gradation $R^{1}\otimes R^{2}=\bigoplus_{n\in\mbb{N}}(R^{1}\otimes R^{2})_{n}$. Then we can understand $V(\bm{\gamma})\in\mrm{End}(\mcal{F})\hat{\otimes}(R^{1}\otimes R^{2})$. An example is the case that $R^{1}=R^{2}=\Lambda$.
\end{enumerate}

The isomorphisms $\iota$ and $\iota^{\dagger}$ are realized as computation of matrix elements of vertex operators. Let us introduce
\begin{equation}
	\Gamma (X)_{\pm}=\exp\left(\sum_{n>0}\frac{1-t^{n}}{1-q^{n}}\frac{p_{n}(X)}{n}a_{\pm n}\right).
\end{equation}
Notice that these are vertex operators specified by $\gamma_{\pm n}=\frac{1-t^{n}}{1-q^{n}}p_{n}(X)\in \Lambda_{X}$, $\gamma_{\mp n}=0$, $n>0$.
We will rely on extensive use of the following fact.
\begin{prop}
For any $\ket{v}\in\mcal{F}$ or $\bra{v}\in\mcal{F}^{\dagger}$, its image under $\iota$ or $\iota^{\dagger}$ is realized as
\begin{align}
	\iota(\ket{v})&=\braket{0|\Gamma (X)_{+}|v}\in\Lambda_{X}, &	\iota^{\dagger}(\bra{v})&=\braket{v|\Gamma (X)_{-}|0}\in\Lambda_{X}.
\end{align}	
\end{prop}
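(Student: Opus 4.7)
The plan is to verify the formula on the basis $\{\ket{\lambda}:\lambda\in\mbb{Y}\}$ of $\mcal{F}$ by an explicit commutation calculation and then extend by $\mbb{F}$-linearity. For the first identity, I would push $\Gamma(X)_{+}$ through the creation operators making up $\ket{\lambda} = a_{-\lambda_{1}}\cdots a_{-\lambda_{\ell(\lambda)}}\ket{0}$, and then use the vacuum identities $\Gamma(X)_{+}\ket{0} = \ket{0}$ (immediate from $a_{n}\ket{0}=0$ for $n>0$) and $\bra{0}a_{-n}=0$ for $n>0$ to read off the answer.

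The core computation is the commutator in the exponent. Setting $A := \sum_{n>0}\tfrac{1-t^{n}}{1-q^{n}}\tfrac{p_{n}(X)}{n}a_{n}$ so that $\Gamma(X)_{+} = e^{A}$, the Heisenberg relation $[a_{n},a_{-m}] = n\tfrac{1-q^{n}}{1-t^{n}}\delta_{n,m}$ makes the sum collapse to give $[A,a_{-m}] = p_{m}(X)$, which is a scalar in $\Lambda_{X}$. Since this scalar commutes with $A$, the Baker--Campbell--Hausdorff expansion for $e^{A}a_{-m}e^{-A}$ truncates after the first term and yields the clean intertwining identity $\Gamma(X)_{+}a_{-m} = \bigl(a_{-m} + p_{m}(X)\bigr)\Gamma(X)_{+}$. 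Iterating this through all the factors in $\ket{\lambda}$ and applying $\Gamma(X)_{+}\ket{0}=\ket{0}$ at the end, I obtain
\[
\Gamma(X)_{+}\ket{\lambda} \;=\; \prod_{i=1}^{\ell(\lambda)}\bigl(a_{-\lambda_{i}}+p_{\lambda_{i}}(X)\bigr)\ket{0}.
\]

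Pairing with $\bra{0}$ then kills every term still containing a factor $a_{-\lambda_{i}}$, leaving only the fully scalar contribution $p_{\lambda_{1}}(X)\cdots p_{\lambda_{\ell(\lambda)}}(X) = p_{\lambda}(X)$, which is by definition $\iota(\ket{\lambda})$. The dual identity is proved by the mirror calculation: the commutator $[a_{m},A']$ with $A' := \sum_{n>0}\tfrac{1-t^{n}}{1-q^{n}}\tfrac{p_{n}(X)}{n}a_{-n}$ again equals $p_{m}(X)$, giving $a_{m}\Gamma(X)_{-} = \Gamma(X)_{-}\bigl(a_{m}+p_{m}(X)\bigr)$, and pushing the lowering operators in $\bra{\lambda}$ rightward through $\Gamma(X)_{-}$ onto $\ket{0}$ then produces $p_{\lambda}(X) = \iota^{\dagger}(\bra{\lambda})$ in the same way. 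There is no substantive obstacle; the entire argument rests on the single scalar commutator, whose value matches $p_{m}(X)$ precisely because the coefficients $\tfrac{1-t^{n}}{1-q^{n}}$ in $\Gamma(X)_{\pm}$ are designed to cancel the factor $\tfrac{1-q^{m}}{1-t^{m}}$ appearing in the Heisenberg relation.
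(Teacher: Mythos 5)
Your proof is correct. The key commutator computation $[A,a_{-m}]=\frac{1-t^{m}}{1-q^{m}}\frac{p_{m}(X)}{m}\cdot m\frac{1-q^{m}}{1-t^{m}}=p_{m}(X)$ is right, the resulting intertwining relation $\Gamma(X)_{+}a_{-m}=(a_{-m}+p_{m}(X))\Gamma(X)_{+}$ follows since the commutator is central, and pairing with $\bra{0}$ (using $\bra{0}a_{-n}=0$) correctly isolates $p_{\lambda}(X)=\iota(\ket{\lambda})$; the dual case is symmetric. The paper states this proposition without proof, and your argument is the standard one that the author evidently has in mind.
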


For a symmetric function $F\in\Lambda$, we write $\ket{F}:=\iota^{-1}(F)\in\mcal{F}$ and $\bra{F}:=(\iota^{\dagger})^{-1}(F)\in\mcal{F}^{\dagger}$.
It immediately follows that
\begin{align}
	P_{\lambda/\mu}(X)&=\braket{Q_{\mu}|\Gamma (X)_{+}|P_{\lambda}}=\braket{P_{\lambda}|\Gamma (X)_{-}|Q_{\mu}}, \\
	Q_{\lambda/\mu}(X)&=\braket{P_{\mu}|\Gamma (X)_{+}|Q_{\lambda}}=\braket{Q_{\lambda}|\Gamma (X)_{-}|P_{\mu}},
\end{align}
for any skew-partition $\lambda/\mu$.

In this paper, the trace of a vertex operator plays a significant role.
Let $D\in\mrm{End}(\mcal{F})$ be the degree operator defined by $D\ket{\lambda}=|\lambda|\ket{\lambda}$, $\lambda\in\mbb{Y}$.
The following formula was essentially found in e.g. \cite{DongMason2000,ChengWang2007}.
\begin{prop}
\label{prop:trace_formula}
Let $\gamma_{n}$, $n\in\mbb{Z}\backslash\set{0}$ be commuting symbols and let $u$ be yet another formal variable.
Then we have
\begin{equation}
\label{eq:trace_vertex_operator}
	\mrm{Tr}_{\mcal{F}}\left(u^{D}V(\bm{\gamma})\right)=\frac{1}{(u;u)_{\infty}}\exp\left(\sum_{n>0}\frac{1-q^{n}}{1-t^{n}}\frac{u^{n}}{1-u^{n}}\frac{\gamma_{-n}\gamma_{n}}{n}\right).	
\end{equation}
\end{prop}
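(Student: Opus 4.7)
The plan is to exploit the decomposition of the Heisenberg Fock space into independent modes and perform a cyclic-trace manipulation that turns the calculation into a single geometric sum. Writing $V(\bm{\gamma})=e^{A_-}e^{A_+}$ with $A_{\pm}:=\sum_{n>0}\frac{\gamma_{\pm n}}{n}a_{\pm n}$, I first observe that because $[D,a_{\pm n}]=\pm n\,a_{\pm n}$, conjugation gives $u^{D}a_{-n}u^{-D}=u^{n}a_{-n}$, which exponentiates to $u^{D}e^{A_{-}(v)}=e^{A_{-}(uv)}u^{D}$, where $A_{-}(v):=\sum_{n>0}\frac{\gamma_{-n}v^{n}}{n}a_{-n}$.

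The second ingredient is that, because modes with different $|n|$ commute up to a scalar, the commutator
\[
[A_{+},A_{-}(v)]=\sum_{n>0}\frac{1-q^{n}}{1-t^{n}}\frac{\gamma_{n}\gamma_{-n}}{n}v^{n}
\]
is central. Baker--Campbell--Hausdorff then gives $e^{A_{+}}e^{A_{-}(v)}=e^{A_{-}(v)}e^{A_{+}}e^{[A_{+},A_{-}(v)]}$. Combining this with cyclicity of the trace yields the self-similar identity
\[
\mrm{Tr}_{\mcal{F}}\!\left(u^{D}e^{A_{-}(v)}e^{A_{+}}\right)=e^{[A_{+},A_{-}(uv)]}\,\mrm{Tr}_{\mcal{F}}\!\left(u^{D}e^{A_{-}(uv)}e^{A_{+}}\right),
\]
so a single round of (cyclicity) $\to$ (push $u^{D}$ through $e^{A_{-}(v)}$) $\to$ (BCH) $\to$ (cyclicity) rescales $v\mapsto uv$ and pulls out a scalar. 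Iterating from $v=1$ produces the accumulated prefactor $\exp\bigl(\sum_{k\ge 1}[A_{+},A_{-}(u^{k})]\bigr)$, while the residual trace degenerates to $\mrm{Tr}_{\mcal{F}}(u^{D})$ because all higher powers of $A_{+}$ strictly lower the Heisenberg degree and contribute nothing to the diagonal.

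Finally I would use the classical boson partition function $\mrm{Tr}_{\mcal{F}}(u^{D})=\sum_{\lambda\in\mbb{Y}}u^{|\lambda|}=1/(u;u)_{\infty}$ and sum the geometric series $\sum_{k\ge 1}u^{nk}=u^{n}/(1-u^{n})$ in each mode to match the right-hand side of \eqref{eq:trace_vertex_operator} exactly.

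The main obstacle is justifying the convergence of the iteration: $A_{-}(u^{K})$ does not vanish as an operator, but since $u$ is a formal parameter and each iteration pushes the perturbation into the ideal $(u^{K})$, the identity stabilises order by order in the $u$-adic topology, so the limit is a well-defined formal power series. A conceptually equivalent alternative is to factor $\mcal{F}=\bigotimes_{n>0}\mcal{F}_{n}$ into single-oscillator subspaces and compute the trace mode by mode via coherent states; this recovers the same answer factor by factor in $n$, which is cleaner in principle but combinatorially longer to present.
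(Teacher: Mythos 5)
Your argument is correct, but it proceeds by a genuinely different route from the paper. The paper's proof is a direct diagonal computation: it projects $V(\bm{\gamma})\ket{\lambda}$ back onto $\ket{\lambda}$, evaluates $(a_{-n})^{k}(a_{n})^{k}$ on $(a_{-n})^{m_{n}(\lambda)}\ket{0}$ explicitly, and then sums over $\lambda$ mode by mode using the binomial identity $\sum_{m\ge k}\binom{m}{k}x^{m}=x^{k}/(1-x)^{k+1}$, which is where the factor $u^{n}/(1-u^{n})$ and the prefactor $1/(u;u)_{\infty}$ emerge simultaneously. You instead derive the self-similar relation $T(v)=e^{[A_{+},A_{-}(uv)]}T(uv)$ from cyclicity of the trace, the conjugation $u^{D}e^{A_{-}(v)}u^{-D}=e^{A_{-}(uv)}$, and Baker--Campbell--Hausdorff, and obtain $u^{n}/(1-u^{n})$ by resumming the geometric series of commutators accumulated under iteration; the boson partition function $\mrm{Tr}_{\mcal{F}}(u^{D})=1/(u;u)_{\infty}$ enters separately at the end. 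All the individual steps check out: the commutator $[A_{+},A_{-}(v)]$ is central and equals $\sum_{n>0}\frac{1-q^{n}}{1-t^{n}}\frac{\gamma_{n}\gamma_{-n}}{n}v^{n}$, the residual trace is indeed $\mrm{Tr}_{\mcal{F}}(u^{D})$ since every nonconstant term of $e^{A_{+}}$ strictly lowers the degree, and the $u$-adic stabilization of the iteration is sound because each coefficient of $u^{m}$ involves only finitely many partitions. Your approach buys a cleaner, more structural derivation that extends immediately to traces of longer products of vertex operators, at the cost of having to justify cyclicity of the trace on the graded infinite-dimensional space $\mcal{F}$ (which holds here order by order in $u$ because $u^{D}$ truncates each coefficient to a finite-dimensional subspace, but deserves a sentence); the paper's computation is more pedestrian but entirely self-contained and avoids that point.
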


\begin{rem}
The both sides of (\ref{eq:trace_vertex_operator}) are understood as follows.
\begin{enumerate}
\item 	If $\gamma_{n}\in R_{n}$, $n\in\mbb{Z}\backslash\set{0}$ with a $\mbb{Z}$-graded commutative algebra $R$, the trace lies in $\overline{R}[[u]]$, where $\overline{R}=\prod_{n\in\mbb{Z}}R_{n}$.
\item 	If $\gamma_{n}\in R_{n}^{1}$ and $\gamma_{-n}\in R^{2}_{n}$, $n>0$ with $\mbb{N}$-graded commutative algebras $R^{1}$, $R^{2}$, the trace lies in $\overline{R^{1}\otimes R^{2}}[[u]]$.	
\end{enumerate}
\end{rem}

\begin{proof}
For a partition $\lambda\in\mbb{Y}$, we write $\pi_{\lambda}:\mcal{F}\to\mbb{F}\ket{\lambda}$ for the projection with respect to the basis $\{\ket{\lambda}:\lambda\in\mbb{Y}\}$.
Then, we can see that
\begin{align}
	\pi_{\lambda}V(\bm{\gamma})\ket{\lambda}=\prod_{n>0}\left(\sum_{k=0}^{m_{n}(\lambda)}\frac{1}{(k!)^{2}}\left(\frac{\gamma_{-n}\gamma_{n}}{n^{2}}\right)^{k}(a_{-n})^{k}(a_{n})^{k}\right)\ket{\lambda}.
\end{align}
Here, note that
\begin{align}
	(a_{-n})^{k}(a_{n})^{k}(a_{-n})^{m_{n}(\lambda)}\ket{0}=k!\binom{m_{n}(\lambda)}{k}\left(n\frac{1-q^{n}}{1-t^{n}}\right)^{k}(a_{-n})^{m_{n}(\lambda)}\ket{0}
\end{align}
to find
\begin{align}
	\pi_{\lambda}V(\bm{\gamma})\ket{\lambda}=\prod_{n>0}\left(\sum_{k=0}^{m_{n}(\lambda)}\frac{1}{k!}\binom{m_{n}(\lambda)}{k}\left(\frac{1-q^{n}}{1-t^{n}}\frac{\gamma_{-n}\gamma_{n}}{n}\right)^{k}\right)\ket{\lambda}.
\end{align}
Since $|\lambda|=\sum_{n>0}m_{n}(\lambda)n$, the trace becomes
\begin{align}
	\mrm{Tr}_{\mcal{F}}\left(u^{D}V(\bm{\gamma})\right)=\prod_{n>0}\left(\sum_{k=0}^{\infty} \frac{1}{k!}\left(\frac{1-q^{n}}{1-t^{n}}\frac{\gamma_{-n}\gamma_{n}}{n}\right)^{k}\sum_{m=k}^{\infty}\binom{m}{k}u^{nm}\right).
\end{align}
Notice
\begin{equation}
	\sum_{m=k}^{\infty}\binom{m}{k}x^{m}=\frac{x^{k}}{(1-x)^{k+1}},	
\end{equation}
which implies
\begin{align}
	\mrm{Tr}_{\mcal{F}}\left(u^{D}V(\bm{\gamma})\right)&=\frac{1}{(u;u)_{\infty}}\prod_{n>0}\left(\sum_{k=0}^{\infty} \frac{1}{k!}\left(\frac{1-q^{n}}{1-t^{n}}\frac{u^{n}}{1-u^{n}}\frac{\gamma_{-n}\gamma_{n}}{n}\right)^{k}\right) \\
	&=\frac{1}{(u;u)_{\infty}}\exp\left(\sum_{n>0} \frac{1-q^{n}}{1-t^{n}}\frac{u^{n}}{1-u^{n}}\frac{\gamma_{-n}\gamma_{n}}{n}\right) \notag
\end{align}
completing the proof.
\end{proof}

Associated with $\bm{\gamma}^{i}=(\gamma^{i}_{n}:n\in\mbb{Z}\backslash\set{0})$, $i=1,\dots, N$, the normally ordered product of corresponding vertex operators is defined by
\begin{equation}
	\no{V(\bm{\gamma}^{1})\cdots V(\bm{\gamma}^{N})}=\exp\left(\sum_{n>0}\frac{\sum_{i=1}^{N}\gamma^{i}_{-n}}{n}a_{-n}\right)\exp\left(\sum_{n>0}\frac{\sum_{i=1}^{N}\gamma^{i}_{n}}{n}a_{n}\right).	
\end{equation}
That is, we put positive modes of the Heisenberg algebra on the right and negative modes on the left.
Besides, note that, in the normally ordered product, vertex operators are commutative.

The following formula will be extensively used under the name of the operator product expansion (OPE):
\begin{prop}
\label{prop:OPE}
Let $\bm{\gamma}=(\gamma_{n}:n\in\mbb{Z}\backslash\set{0})$ and $\pr{\bm{\gamma}}=(\pr{\gamma}_{n}:n\in\mbb{Z}\backslash\set{0})$ be sequences of commuting symbols. Then the corresponding vertex operators admit the following formula:
\begin{equation}
	V(\bm{\gamma})V(\pr{\bm{\gamma}})=\exp\left(\sum_{n>0}\frac{1-q^{n}}{1-t^{n}}\frac{\gamma_{n}\pr{\gamma}_{-n}}{n}\right)\no{V(\bm{\gamma})V(\pr{\bm{\gamma}})}.
\end{equation}
\end{prop}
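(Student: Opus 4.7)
The plan is to reduce the claim to a single application of the Baker--Campbell--Hausdorff identity, exploiting the fact that the relevant commutator lies in the center.

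First, I would introduce the abbreviations
\begin{align*}
A_{\pm}&:=\sum_{n>0}\frac{\gamma_{\pm n}}{n}a_{\pm n}, &
B_{\pm}&:=\sum_{n>0}\frac{\pr{\gamma}_{\pm n}}{n}a_{\pm n},
\end{align*}
so that $V(\bm{\gamma})=e^{A_{-}}e^{A_{+}}$ and $V(\pr{\bm{\gamma}})=e^{B_{-}}e^{B_{+}}$, and hence
\begin{equation*}
V(\bm{\gamma})V(\pr{\bm{\gamma}})=e^{A_{-}}\bigl(e^{A_{+}}e^{B_{-}}\bigr)e^{B_{+}}.
\end{equation*}
The whole proof will come down to normally ordering the inner factor $e^{A_{+}}e^{B_{-}}$.

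Next I would compute the commutator using the Heisenberg relation $[a_{m},a_{-n}]=m\frac{1-q^{m}}{1-t^{m}}\delta_{m,n}$:
\begin{equation*}
[A_{+},B_{-}]=\sum_{m,n>0}\frac{\gamma_{m}\pr{\gamma}_{-n}}{mn}[a_{m},a_{-n}]=\sum_{n>0}\frac{1-q^{n}}{1-t^{n}}\frac{\gamma_{n}\pr{\gamma}_{-n}}{n}.
\end{equation*}
Crucially this is a scalar (it lives in the coefficient algebra $R$ or $R^{1}\otimes R^{2}$ of the vertex operator, not in $U$), so it commutes with both $A_{+}$ and $B_{-}$. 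Therefore BCH collapses to its first iteration and gives
\begin{equation*}
e^{A_{+}}e^{B_{-}}=e^{[A_{+},B_{-}]}\,e^{B_{-}}e^{A_{+}}.
\end{equation*}

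Substituting this back, I obtain
\begin{equation*}
V(\bm{\gamma})V(\pr{\bm{\gamma}})=\exp\left(\sum_{n>0}\frac{1-q^{n}}{1-t^{n}}\frac{\gamma_{n}\pr{\gamma}_{-n}}{n}\right)e^{A_{-}}e^{B_{-}}e^{A_{+}}e^{B_{+}}.
\end{equation*}
Finally, since $A_{-}$ and $B_{-}$ only involve negative modes (which pairwise commute among themselves), and similarly for $A_{+},B_{+}$, the remaining operator factor combines into
\begin{equation*}
e^{A_{-}+B_{-}}e^{A_{+}+B_{+}}=\no{V(\bm{\gamma})V(\pr{\bm{\gamma}})},
\end{equation*}
which yields the stated OPE.

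I do not anticipate any real obstacle: the only subtle point is checking that $[A_{+},B_{-}]$ is genuinely central, which in the two settings described in the remark after Proposition~\ref{prop:trace_formula} amounts to noting that the symbols $\gamma_{n},\pr{\gamma}_{-n}$ multiply in the graded commutative coefficient algebra while $[a_{m},a_{-n}]$ is already a scalar multiple of the identity of $\mrm{End}(\mcal{F})$. Once this is observed, the computation is purely formal.
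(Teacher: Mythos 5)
Your proposal is correct and is exactly the argument the paper intends: the paper's proof simply cites the Baker--Campbell--Hausdorff formula together with the observation that the commutator of Heisenberg generators is central, and your write-up fills in the same computation of $[A_{+},B_{-}]$ and the recombination of the negative and positive modes. No gaps.
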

\begin{proof}
It follows from the Baker--Campbell--Hausdorff formula. Note that, since the commutator of two Heisenberg generators is central, higher commutators do not appear in the exponential.
\end{proof}

\subsection{Free field realization of operators}
\label{subsect:free_field_operators}
Since we have identified the Fock space $\mcal{F}$ with the space of symmetric functions, we can also identify an operator $T\in \mrm{End}(\Lambda)$ with one $\what{T}:=\iota^{-1}\circ T \circ \iota\in\mrm{End}(\mcal{F})$, the free field realization of $T$.
We introduced the family of renormalized Macdonald operators $E_{r}$, $r\in\mbb{N}$.
To describe their free field realization, we introduce the vertex operator
\begin{equation}
	\eta (z)=V(\bm{\gamma})=\exp\left(\sum_{n>0}\frac{1-t^{-n}}{n}a_{-n}z^{n}\right)\exp\left(-\sum_{n>0}\frac{1-t^{n}}{n}a_{n}z^{-n}\right)	
\end{equation}
associated with $\gamma_{n}=-\sgn (n)(1-t^{n})z^{-n}\in\mbb{F}[z,z^{-1}]$, $n\in\mbb{Z}\backslash\set{0}$.

\begin{prop}[\cite{Shiraishi2006,FeiginHashizumeHoshinoShiraishiYanagida2009,SK2019}]
\label{prop:free_field_Macdonald_operator}
For each $r\in\mbb{N}$, the free field realization of the $r$-th renormalized Macdonald operator is given by
\begin{equation}
		\what{E}_{r}=\frac{t^{-r}}{r!}\int D^{r}\bm{z}\det\left(\frac{1}{z_{i}-t^{-1}z_{j}}\right)_{1\le i,j\le r}\no{\eta (z_{1})\cdots \eta (z_{r})}.
\end{equation}
Here, a rational function of the form $\frac{1}{x-\gamma y}:=\sum_{k=0}^{\infty}x^{-k-1}(\gamma y)^{k}$, $\gamma\in\mbb{F}$ is always expanded in $\mbb{F}[x,x^{-1}][[y]]$ (Therefore, $\frac{1}{x-\gamma y}\neq -\frac{1}{\gamma y-x}$).
\end{prop}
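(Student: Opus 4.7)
The plan is to verify the stated identity by evaluating the right-hand side on the basis $\{\ket{P_\lambda}\}_{\lambda \in \mbb{Y}}$ of $\mcal{F}$ and checking that it acts diagonally with eigenvalue $e_r(q^\lambda t^{-\rho})$, which coincides with the eigenvalue of $\what{E}_r$ under the identification $\iota$.

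As a warm-up, I would first treat $r=1$: the integral reduces to extracting the $z^{-1}$ coefficient of $z^{-1}\eta(z)$, and a direct computation using the explicit form of $\eta(z)$ together with the free-field realizations $P_{\lambda/\mu}(X) = \braket{Q_\mu|\Gamma(X)_+|P_\lambda}$ and $Q_{\lambda/\mu}(X) = \braket{P_\mu|\Gamma(X)_+|Q_\lambda}$ (and the Pieri rules recalled earlier in the preliminaries) shows $\what{E}_1 \ket{P_\lambda} = e_1(q^\lambda t^{-\rho})\ket{P_\lambda}$, with the shift by the constant tail of the specialization accounted for by the $t^{-1}$ prefactor.

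For $r \ge 2$, the main computational tool is the operator product expansion of Proposition~\ref{prop:OPE}, which yields
\begin{equation*}
\eta(z_1)\cdots \eta(z_r) = \prod_{1\le i<j\le r}\frac{(1-z_j/z_i)(1-qt^{-1}z_j/z_i)}{(1-qz_j/z_i)(1-t^{-1}z_j/z_i)}\,\no{\eta(z_1)\cdots \eta(z_r)}.
\end{equation*}
Combined with the Cauchy determinant identity
\begin{equation*}
\det\left(\frac{1}{z_i - t^{-1} z_j}\right)_{1\le i,j\le r} = \frac{(-t^{-1})^{\binom{r}{2}}\prod_{i<j}(z_i-z_j)^2}{\prod_{i,j}(z_i - t^{-1}z_j)},
\end{equation*}
the integrand of the proposed $\what{E}_r$ becomes a manifestly symmetric rational function in $(z_1,\dots,z_r)$ multiplied by the normal-ordered product $\no{\eta(z_1)\cdots \eta(z_r)}$. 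After symmetrization the extraction of the Laurent coefficient in $\int D^r\bm{z}$ can be organized as an iterated residue calculation whose poles at $z_j = t^{-1} z_i$ successively produce products of $r$ factors of the form appearing in the eigenvalue $e_r(q^\lambda t^{-\rho})$ via the specialization formula (\ref{eq:specialization_lambda_rho}).

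The main obstacle is the bookkeeping that identifies this residue sum with $e_r(q^\lambda t^{-\rho})$ applied to $\ket{P_\lambda}$ and, equivalently, with the action of $E_r = \varprojlim_n E_r^{(n)}$ reconstructed from the Macdonald difference operators $D_k^{(n)}$ via the normalization $E_r^{(n)} = \sum_{k=0}^r t^{-nr-\binom{r-k+1}{2}}(t^{-1};t^{-1})_{r-k}^{-1}D_k^{(n)}$. This matching is precisely what is carried out in \cite{Shiraishi2006, FeiginHashizumeHoshinoShiraishiYanagida2009, SK2019}, so the proof amounts to invoking those computations with the conventions adopted here; independence from the number of variables comes from the projective limit structure already encoded in the vertex operator $\eta(z)$.
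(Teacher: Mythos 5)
The paper itself offers no proof of this proposition: it is stated as an imported result with the citation \cite{Shiraishi2006,FeiginHashizumeHoshinoShiraishiYanagida2009,SK2019}, so your final move of ``invoking those computations'' puts you on exactly the same footing as the paper, and to that extent the proposal is acceptable. Your preparatory steps are also individually correct: the OPE factor $\prod_{i<j}\frac{(1-z_j/z_i)(1-qt^{-1}z_j/z_i)}{(1-qz_j/z_i)(1-t^{-1}z_j/z_i)}$ is exactly what Proposition~\ref{prop:OPE} gives for $\eta(z_1)\cdots\eta(z_r)$, and the Cauchy evaluation of $\det\bigl(\frac{1}{z_i-t^{-1}z_j}\bigr)$ is right as a rational identity (modulo the caveat, which the proposition itself flags, that each entry is a one-sided formal expansion, so the determinant is not literally a single rational function on a common domain).

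However, the strategy you wrap around the citation is not one that would go through, and it is not the strategy of the cited works. You propose to act with the right-hand side directly on $\ket{P_\lambda}$ and to read off $e_r(q^\lambda t^{-\rho})$ from iterated residues at the poles $z_j=t^{-1}z_i$. This cannot work as described: to evaluate $\no{\eta(z_1)\cdots\eta(z_r)}\ket{P_\lambda}$ you would already need to know how the modes of $\eta(z)$ act on the Macdonald basis, which is essentially the content of the proposition; and the eigenvalue data $q^{\lambda_i}t^{-i}$ does not come from the kernel's poles at $z_j=t^{-1}z_i$ at all, but from the interaction of $\eta(z)$ with an alphabet. The actual proof passes through the $n$-variable reduction: one shows $\bra{0}\Gamma(X)_+\,\what{E}_r=E_r^{(n)}\,\bra{0}\Gamma(X)_+$ for every $n$, using that commuting $\Gamma(X)_+$ through $\eta_-(z)$ produces the factor $\prod_i\frac{1-t^{-1}x_iz}{1-x_iz}$ while absorbing $\eta_+(z)$ into $\Gamma(X)_+$ at $z^{-1}=x_i$ implements the $q$-shift $T_{q,x_i}$; the residues relevant to recovering $D_k^{(n)}$ are therefore at $z=x_i^{-1}$, and the poles $z_j=t^{-1}z_i$ only reproduce the combinatorial coefficient $t^{\binom{k}{2}}\prod\frac{tx_i-x_j}{x_i-x_j}$ and the renormalization $E_r^{(n)}=\sum_k\frac{t^{-nr-\binom{r-k+1}{2}}}{(t^{-1};t^{-1})_{r-k}}D_k^{(n)}$. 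You mention this reduction only parenthetically at the end; it is in fact the proof, and the ``evaluate on $\ket{P_\lambda}$ by residues'' plan that precedes it should be dropped.
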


We stress here that the operators $\what{E}_{r}$, $r\in\mbb{N}$ act on the Fock space $\mcal{F}$, which is isomorphic to the space of symmetric functions $\Lambda$. Therefore, they no longer make sense as difference operators, but they are projective limits of difference operators.

Owing to the property $P_{\lambda}(q,t)=P_{\lambda}(q^{-1},t^{-1})$ of a Macdonald symmetric function, the renormalized Macdonald operator with inverted parameters $\pr{E}_{r}:=E_{r}(q^{-1},t^{-1})$ is also diagonalized by the Macdonald symmetric functions.
Let us introduce another vertex operator
\begin{equation}
	\xi (z)=V(\bm{\gamma})=\exp\left(-\sum_{n>0}\frac{1-t^{-n}}{n}(t/q)^{n/2}a_{-n}z^{n}\right) \exp\left(\sum_{n>0}\frac{1-t^{n}}{n}(t/q)^{n/2}a_{n}z^{-n}\right)	
\end{equation}
associated with $\gamma_{n}=\sgn (n)(1-t^{n})(t/q)^{|n|/2}z^{-n}\in \mbb{Q}(q^{1/2},t^{1/2})[z,z^{-1}]$, $n\in\mbb{Z}\backslash\set{0}$.
\begin{prop}[\cite{Shiraishi2006,FeiginHashizumeHoshinoShiraishiYanagida2009,SK2019}]
For each $r\in\mbb{N}$, we have
\begin{equation}
	\what{E}^{\prime}_{r}=\frac{t^{r}}{r!}\int D^{r}\bm{z}\det\left(\frac{1}{z_{i}-tz_{j}}\right)_{1\le i,j\le r}\no{\xi (z_{1})\cdots \xi (z_{r})}.	
\end{equation}
\end{prop}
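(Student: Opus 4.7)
The plan is to mirror the proof of Proposition~\ref{prop:free_field_Macdonald_operator}, with $\eta(z)$ replaced by $\xi(z)$ and the parameter substitution $(q,t)\mapsto (q^{-1},t^{-1})$ applied where appropriate. Since $P_{\lambda}(q,t)=P_{\lambda}(q^{-1},t^{-1})$, the operator $\pr{E}_{r}:=E_{r}(q^{-1},t^{-1})$ is diagonal in the Macdonald basis with eigenvalue obtained from $e_{r}(q^{\lambda}t^{-\rho})$ under this substitution; the formula in the statement is thus the natural dual analogue of the one for $\what{E}_{r}$.

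First I would compute the OPE $\xi(z)\xi(w)$ via Proposition~\ref{prop:OPE}. The rational prefactor has singularities whose locations are consistent with the kernel $\det(1/(z_{i}-tz_{j}))$ replacing the kernel $\det(1/(z_{i}-t^{-1}z_{j}))$ from the $\eta$-case. Next I would evaluate matrix elements $\braket{P_{\mu}|\no{\xi(z_{1})\cdots\xi(z_{r})}|P_{\lambda}}$ using the Macdonald Pieri rules recalled in Sect.\ \ref{sect:preliminaries}, tracking how the additional factor $(t/q)^{n/2}$ in the definition of $\xi$ modifies the expansion coefficients $\psi_{\lambda/\mu}$ and $\varphi_{\lambda/\mu}$ into their $(q^{-1},t^{-1})$-analogues.

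Finally, the multiple contour integral $\int D^{r}\bm{z}$ against $\det(1/(z_{i}-tz_{j}))$, combined with the prefactor $t^{r}/r!$, should antisymmetrize these matrix elements and collapse them into the scalar eigenvalue of $\pr{E}_{r}$ on $P_{\lambda}$. Equivalently, one can package this argument as applying Proposition~\ref{prop:free_field_Macdonald_operator} formally with inverted parameters in the Fock space of the $(q^{-1},t^{-1})$-Heisenberg algebra, and then transporting the result to the $(q,t)$-Fock space $\mcal{F}$ via the algebra isomorphism $a'_{n}\mapsto \alpha_{n}a_{n}$ with $\alpha_{m}\alpha_{-m}=(t/q)^{m}$, which is the unique rescaling (up to choice of signs) that intertwines the two sets of Heisenberg commutation relations.

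The main obstacle lies in this last identification. The naive image of the inverted-parameter vertex operator under the rescaling $a'_{n}\mapsto -(t/q)^{|n|/2}a_{n}$ does not agree with $\xi(z)$ on the nose---the coefficients $(1-t^{n})$ and $(1-t^{-n})$ appear in swapped positions in the two exponentials---so one cannot simply invoke the previous proposition as a black box. Instead one must verify directly that the combined effect of the $(t/q)^{n/2}$ scaling in $\xi(z)$ and the kernel $1/(z_{i}-tz_{j})$ reproduces exactly the substitution $(q,t)\mapsto(q^{-1},t^{-1})$ in the Pieri coefficients, so that signs, powers of $q$ and $t$, and the normal-ordering contractions all align to give the correct combinatorial sum for the eigenvalue of $\pr{E}_{r}$ on $P_{\lambda}$.
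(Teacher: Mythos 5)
First, a point of comparison: the paper itself gives no proof of this proposition --- it is imported from \cite{Shiraishi2006,FeiginHashizumeHoshinoShiraishiYanagida2009,SK2019}, and the same is true of Proposition~\ref{prop:free_field_Macdonald_operator}, so your plan to ``mirror the proof'' of the latter has no referent inside the paper. That said, two of your observations are correct and genuinely useful. The OPE computed via Proposition~\ref{prop:OPE} gives $\xi(z)\xi(w)=\frac{(1-w/z)(1-(t/q)w/z)}{(1-tw/z)(1-q^{-1}w/z)}\no{\xi(z)\xi(w)}$, whose pole at $z=tw$ does match the kernel $\det\left(1/(z_{i}-tz_{j})\right)$ (the other pole, at $z=q^{-1}w$, is the one exploited for $\what{G}^{\prime}_{r}$). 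And your final paragraph correctly diagnoses why the cheap route fails: the rescaling $a_{n}\mapsto\pm(t/q)^{|n|/2}a_{n}$ does intertwine the $(q,t)$- and $(q^{-1},t^{-1})$-Heisenberg relations, but the image of the inverted-parameter $\eta(z)$ carries $(1-t^{n})$ and $(1-t^{-n})$ on the wrong modes relative to $\xi(z)$, so the primed proposition is not a formal corollary of the unprimed one.

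The gap is that the central step is never carried out, and the tool you propose for it is the wrong one. The Pieri rules recalled in Sect.~\ref{sect:preliminaries} govern multiplication by $g_{r}$, i.e.\ the operators $\what{G}_{r}$; they do not compute $\braket{P_{\mu}|\no{\xi(z_{1})\cdots\xi(z_{r})}|P_{\lambda}}$, and they have no natural mechanism for producing the eigenvalue $e_{r}(q^{-\lambda}t^{\rho})$. At best one can factor $\xi(z)_{\pm}$ as a ratio of two $\Gamma_{\pm}$'s at the one-variable specializations $(t/q)^{\pm1/2}z^{\mp1}$ and $(tq)^{\pm1/2}z^{\mp1}$, which converts the matrix elements into ratios of skew Macdonald functions --- far heavier objects than Pieri coefficients, and you give no argument that the integral against $\det\left(1/(z_{i}-tz_{j})\right)$ collapses them to the desired scalar. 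The argument actually used in the cited references is different: one computes the OPE of $\Gamma(x_{1},\dots,x_{n})_{+}$ with $\no{\xi(z_{1})\cdots\xi(z_{r})}$, which produces the factor $\prod_{j}\frac{1-(t/q)^{1/2}x_{j}z_{i}}{1-(tq)^{-1/2}x_{j}z_{i}}$ with poles at $z_{i}=(tq)^{1/2}x_{j}^{-1}$; at such a pole $\Gamma(X)_{+}\xi(z_{i})_{+}=\Gamma(T_{q^{-1},x_{j}}X)_{+}$, so extracting the constant term $\int D^{r}\bm{z}$ reproduces the finite-variable operator $E^{\prime(n)}_{r}$ built from the $q^{-1}$-shifts, for every $n$, and compatibility with the projections $\Lambda^{(n+1)}\to\Lambda^{(n)}$ gives the statement on $\Lambda$. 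Your closing sentence --- that ``one must verify directly that \ldots\ all align'' --- concedes exactly this verification and leaves it undone; as written, the proposal identifies the obstruction without overcoming it.
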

Then, by definition, these operators are diagonalized by the Macdonald basis so that
\begin{equation}
	\what{E}^{\prime}_{r}\ket{P_{\lambda}(q,t)}=e_{r}(q^{-\lambda}t^{\rho})\ket{P_{\lambda}(q,t)},\quad r\in\mbb{N},\quad \lambda\in\mbb{Y}.	
\end{equation}

Let us see still other two series of operators diagonalized by the Macdonald basis following \cite{Shiraishi2006,FeiginHashizumeHoshinoShiraishiYanagida2009,SK2019}.
For $r\in\mbb{N}$, we introduce the following operator
\begin{equation}
	\what{G}_{r}:=\frac{(-1)^{r}}{r!}\int D^{r}\bm{z}\det\left(\frac{1}{z_{i}-qz_{j}}\right)_{1\le i,j\le r}\no{\eta (z_{1})\cdots \eta (z_{r})}.
\end{equation}
Then, we have
\begin{equation}
	\what{G}_{r}\ket{P_{\lambda}(q,t)}=g_{r}(q^{\lambda}t^{-\rho};q,t)\ket{P_{\lambda}(q,t)},\quad r\in\mbb{N},\quad \lambda\in\mbb{Y}.	
\end{equation}
The other series corresponds to $\what{G}_{r}$, $r\in\mbb{N}$ with inverted parameters:
\begin{equation}
	\what{G}^{\prime}_{r}=\frac{(-1)^{r}}{r!} \int D^{r}\bm{z}\det\left(\frac{1}{z_{i}-q^{-1}z_{j}}\right)_{1\le i,j\le r}\no{\xi (z_{1})\cdots \xi (z_{r})},\quad r\in\mbb{N}.
\end{equation}
Then they are diagonalized so that
\begin{equation}
	\what{G}^{\prime}_{r}\ket{P_{\lambda}(q,t)}=g_{r}(q^{-\lambda}t^{\rho};q^{-1},t^{-1})\ket{P_{\lambda}(q,t)},\quad r\in\mbb{N},\quad \lambda\in\mbb{Y}.	
\end{equation}
Here, note that the eigenvalue is the same as $g_{r}(q^{-\lambda}t^{\rho};q^{-1},t^{-1})=g_{r}(q^{-\lambda+1}t^{\rho -1};q,t)$.

\section{Periodic Macdonald processes}
\label{sect:periodic_Macdonald}
The aim of this section is to prove Proposition~\ref{prop:Cauchy_identity}.
Let us define the universal version of the weight (\ref{eq:periodic_weight}).
Let $N\in\mbb{N}$.
We write $\bm{X}=(X^{0},\dots, X^{N-1})$ and $\bm{Y}=(Y^{1},\dots, Y^{N})$ for $N$-tuples of infinite set of variables.
For $\bm{\lambda}=(\lambda^{1},\dots,\lambda^{N}), \bm{\mu}=(\mu^{1},\dots, \mu^{N})\in\mbb{Y}^{N}$, we set
\begin{equation}
	W^{\bm{X},\bm{Y}}_{q,t;u}(\bm{\lambda},\bm{\mu})=u^{|\mu^{N}|}\prod_{i=1}^{N}Q_{\lambda^{i}/\mu^{i}}(Y^{i};q,t)P_{\lambda^{i+1}/\mu^{i}}(X^{i};q,t) \in \left(\bigotimes_{i=0}^{N-1}\Lambda_{X^{i}}\otimes \Lambda_{Y^{i+1}}\right)[u].
\end{equation}
When we fix parameters $q,t\in\mbb{R}$ such that $|q|,|t|<1$, adopt specification at these parameters and take $N$-tuples of $(q,t)$-Macdonald positive specializations $\bm{\rho}^{+}=(\rho^{+}_{0},\dots, \rho^{+}_{N-1})$, $\bm{\rho}^{-}=(\rho^{-}_{1},\dots, \rho^{-}_{N})$, the weight (\ref{eq:periodic_weight}) is the image of $W^{\bm{X},\bm{Y}}_{q,t;u}(\bm{\lambda},\bm{\mu})$ under $\bigotimes_{i=0}^{N-1}\rho^{+}_{i}\otimes\rho^{-}_{i+1}$, where each specialization acts as $\rho^{+}_{i}:\Lambda_{\mbb{R},X^{i}}\to\mbb{R}$, $\rho^{-}_{i}:\Lambda_{\mbb{R},Y^{i}}\to\mbb{R}$.
Note that, at the limit $u\to 0$, the weight $W^{\bm{X},\bm{Y}}_{q,t;u}(\bm{\lambda},\bm{\mu})$ vanishes unless $\mu^{N}=\emptyset$ recovering the corresponding $N$-step Macdonald process since
\begin{equation}
	u^{|\mu|}\to
	\begin{cases}
	1, & \mu=\emptyset, \\
	0, & \mbox{otherwise}
	\end{cases}
\end{equation}
as $u\to 0$.

\begin{proof}[Proof of Proposition \ref{prop:Cauchy_identity}]
Our goal is to show
\begin{equation}
	\Pi_{q,t;u}(\bm{X};\bm{Y}):=\sum_{\bm{\lambda},\bm{\mu}\in\mbb{Y}}W^{\bm{X},\bm{Y}}_{q,t;u}(\bm{\lambda},\bm{\mu})
	=\frac{1}{(u;u)_{\infty}}\frac{\prod_{i=0}^{N-1}\prod_{j=1}^{N}\tilde{\Pi}_{q,t;u}(X^{i};Y^{i})}{\prod_{i=0}^{N-1}\prod_{j=1}^{i}\tilde{\Pi}_{q,t;u}(X^{i};Y^{j})}.
\end{equation}
Let us first write the weight by means of matrix elements of vertex operators:
\begin{equation}
	W^{\bm{X},\bm{Y}}_{q,t;u}(\bm{\lambda},\bm{\mu})=u^{|\mu^{N}|}\prod_{i=1}^{N}\braket{Q_{\lambda^{i}}(q,t)|\Gamma (Y^{i})_{-}|P_{\mu^{i}}(q,t)}\braket{Q_{\mu^{i}}(q,t)|\Gamma (X^{i})_{+}|P_{\lambda^{i+1}}(q,t)}.
\end{equation}
Recalling the property $\mrm{Id}_{\mcal{F}}=\sum_{\lambda\in\mbb{Y}}\ket{P_{\lambda}(q,t)}\bra{Q_{\lambda}(q,t)}$ and the definition of the degree operator $D$, we can see that
\begin{equation}
	\Pi_{q,t;u}(\bm{X};\bm{Y})=\mrm{Tr}_{\mcal{F}}\left(u^{D}\Gamma (X^{0})_{+}\Gamma (Y^{1})_{-}\Gamma (X^{1})_{+}\cdots \Gamma (Y^{N-1})_{-}\Gamma (X^{N-1})_{+}\Gamma (Y^{N})_{-}\right).
\end{equation}
To apply the trace formula in Proposition \ref{prop:trace_formula}, we rearrange the operators in the trace in the normally ordered manner.
By a standard computation of OPE for vertex operators, Proposition \ref{prop:OPE}, we have
\begin{align}
	\Gamma (X)_{+}\Gamma (Y)_{-}
	&=\exp\left(\sum_{n>0}\frac{1-t^{n}}{1-q^{n}}\frac{p_{n}(X)p_{n}(Y)}{n}\right)\Gamma (Y)_{-}\Gamma (X)_{+} \\
	&=\tilde{\Pi}_{q,t;0}(X;Y)\Gamma (Y)_{-}\Gamma (X)_{+}. \notag
\end{align}
Repeating this kind of reordering, we can make the operators normally ordered so that
\begin{equation}
	\Gamma (X^{0})_{+}\Gamma (Y^{1})_{-}\Gamma (X^{1})_{+}\cdots \Gamma (Y^{N-1})_{-}\Gamma (X^{N-1})_{+}\Gamma (Y^{N})_{-}
	=\left(\prod_{i<j}\tilde{\Pi}_{q,t;0}(X^{i};Y^{j})\right) V(\bm{\gamma}),
\end{equation}
where we set
\begin{align}
	\gamma_{n}&=\frac{1-t^{n}}{1-q^{n}}\sum_{i=0}^{N-1}p_{n}(X^{i}), & \gamma_{-n}&=\frac{1-t^{n}}{1-q^{n}}\sum_{i=1}^{N}p_{n}(Y^{i}),\quad  n>0.
\end{align}
Now we can apply Proposition \ref{prop:trace_formula} to obtain
\begin{align}
	\mrm{Tr}_{\mcal{F}}\left(u^{D}V(\bm{\gamma})\right)
	&=\frac{1}{(u;u)_{\infty}}\prod_{i=0}^{N-1}\prod_{j=1}^{N}\exp\left(\sum_{n>0}\frac{1-t^{n}}{1-q^{n}}\frac{u^{n}}{1-u^{n}}\frac{p_{n}(X^{i})p_{n}(Y^{j})}{n}\right).
\end{align}
Notice that
\begin{align}
	&\tilde{\Pi}_{q,t;0}(X;Y)\exp\left(\sum_{n>0}\frac{1-t^{n}}{1-q^{n}}\frac{u^{n}}{1-u^{n}}\frac{p_{n}(X)p_{n}(Y)}{n}\right) \\
	&=\exp\left(\sum_{n>0}\frac{1-t^{n}}{1-q^{n}}\frac{1}{1-u^{n}}\frac{p_{n}(X)p_{n}(Y)}{n}\right)  \notag \\
	&=\tilde{\Pi}_{q,t;u}(X;Y) \notag
\end{align}
Therefore, we obtain the desired result.
\end{proof}

Let us investigate some limiting cases.
\begin{itemize}
\item 	When $N=1$, we have
		\begin{align}
			\sum_{\lambda,\mu\in\mbb{Y}}u^{|\lambda|}P_{\mu/\lambda}(X)Q_{\mu/\lambda}(Y)
			=\Pi_{q,t;u}(X;Y):=\frac{1}{(u;u)_{\infty}}\prod_{i,j=1}^{\infty}\frac{(tx_{i}y_{j};q,u)_{\infty}}{(x_{i}y_{j};q,u)_{\infty}},
		\end{align}
		which was proved in \cite{RainsWarnaar2018}.
\item 	At the limit $u\to 0$, it reduces to the partition function for a Macdonald process \cite{BorodinCorwin2014,BorodinCorwinGorinShakirov2016}:
		\begin{equation}
			\Pi_{q,t;0}(\bm{X};\bm{Y})=\prod_{i<j}\tilde{\Pi}_{q,t;0}(X^{i};Y^{j}).
		\end{equation}
\item 	At the Schur-limit $q\to t$, the partition function for a periodic Schur process \cite{Borodin2007} is recovered:
		\begin{equation}
			\tilde{\Pi}_{t,t;u}(X;Y)=\prod_{i,j\ge 1}\frac{1}{(x_{i}y_{j};u)_{\infty}}.
		\end{equation}
\end{itemize}

\section{Stationary periodic Macdonald Plancherel process}
\label{eq:Plancherel_process}
In this section, we introduce a stationary periodic Macdonald Plancherel process
and show that its marginal laws give examples of periodic Macdonald processes.
The description below goes along the line of \cite[Section 6]{BeteaBouttier2019} for periodic Schur processes
except that we employ the Fock space $\mcal{F}$, for which the pairing with the dual $\mcal{F}^{\dagger}$ is $(q,t)$-deformed,

For a positive number $\xi>0$, we write $\rho_{\xi}$ for the Plancherel specialization of parameter $\xi$, i.e., it is the specialization $\rho_{\xi}:\Lambda\to\mbb{R}$ defined by
\begin{equation}
	\rho_{\xi}(p_{n})=\xi \delta_{n,1},\quad n\in\mbb{N}.
\end{equation}

\begin{prop}
For $\xi>0$, the Plancherel specialization of the Macdonald symmetric functions are given by
\begin{align}
	P_{\lambda/\mu}(\rho_{\xi};q,t)&=\frac{\xi^{|\lambda|-|\mu|}}{(|\lambda|-|\mu|)!}\dim_{q,t} (\mu,\lambda), & Q_{\lambda/\mu}(\rho_{\xi};q,t)=\frac{\xi^{|\lambda|-|\mu|}}{(|\lambda|-|\mu|)!}\dim_{q,t}^{\prime} (\mu,\lambda).
\end{align}
Here, the functions $\dim_{q,t} (\mu,\lambda)$ and $\dim_{q,t}^{\prime}(\mu,\lambda)$ are defined by
\begin{align}
	\dim_{q,t} (\mu,\lambda)&=\sum_{\mu\nearrow \nu_{1}\nearrow\cdots\nearrow \nu_{|\lambda|-|\mu|-1}\nearrow\lambda}\prod_{i=0}^{|\lambda|-|\mu|-1}\psi_{\nu_{i+1}/\nu_{i}}(q,t), \\
	\dim_{q,t}^{\prime} (\mu,\lambda)&=\sum_{\mu\nearrow \nu_{1}\nearrow\cdots\nearrow \nu_{|\lambda|-|\mu|-1}\nearrow\lambda}\prod_{i=0}^{|\lambda|-|\mu|-1}\varphi_{\nu_{i+1}/\nu_{i}}(q,t),
\end{align}
where the sum runs over paths from $\mu$ to $\lambda$ in the Young graph and we wrote $\nu_{0}=\mu$, $\nu_{|\lambda|-|\mu|}=\lambda$.
\end{prop}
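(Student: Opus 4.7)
The approach I would take is a short free-field calculation built on the identifications established in Subsect.~\ref{subsect:Fock_representation}. Because the Plancherel specialization satisfies $p_n(\rho_\xi)=\xi\delta_{n,1}$, the vertex operator $\Gamma(\rho_\xi)_+$ degenerates into the single exponential $\exp\bigl(\tfrac{1-t}{1-q}\xi\,a_1\bigr)$, and I would begin from the free-field formula $P_{\lambda/\mu}(\rho_\xi)=\braket{Q_\mu|\Gamma(\rho_\xi)_+|P_\lambda}$. Setting $n:=|\lambda|-|\mu|$, a degree count on $\mcal{F}$ (since $a_1$ shifts the grading by $-1$) shows that only the $n$-th term in the Taylor expansion of the exponential survives, reducing the problem to evaluating $\braket{Q_\mu|a_1^n|P_\lambda}$.

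Next I would translate this matrix element back into the symmetric-function picture by using the associativity property $\braket{u|a_n\cdot|v}=\braket{u|\cdot a_n|v}$ of the Fock pairing from Subsect.~\ref{subsect:Fock_representation}. Under $\iota^{\dagger}$, the action of $a_n$ (for $n>0$) on the bra side is realized as multiplication by $p_n$ on $\Lambda$: this follows from the one-line OPE check $[a_n,\Gamma(X)_-]=p_n(X)\,\Gamma(X)_-$ together with $a_n\ket{0}=0$, applied to the realization $\iota^{\dagger}(\bra{u})=\bra{u}\Gamma(X)_-\ket{0}$. Consequently $\braket{Q_\mu|a_1^n|P_\lambda}=\braket{p_1^n Q_\mu,P_\lambda}_{q,t}$.

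Because $g_1(q,t)=Q_{(1)}(q,t)=\tfrac{1-t}{1-q}p_1$, iterating the Pieri rule $Q_\mu g_1=\sum_{\nu}\psi_{\nu/\mu}(q,t)Q_\nu$ yields the telescoped identity
\[
	Q_\mu\,p_1^n=\Bigl(\tfrac{1-q}{1-t}\Bigr)^n\sum_{\lambda}\dim_{q,t}(\mu,\lambda)\,Q_\lambda.
\]
Pairing with $P_\lambda$ and using $\braket{Q_\lambda,P_\lambda}_{q,t}=1$ extracts the coefficient $(\tfrac{1-q}{1-t})^n\dim_{q,t}(\mu,\lambda)$, so all factors $(\tfrac{1-t}{1-q})^n$ and $(\tfrac{1-q}{1-t})^n$ cancel exactly to give $P_{\lambda/\mu}(\rho_\xi)=\tfrac{\xi^n}{n!}\dim_{q,t}(\mu,\lambda)$. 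The argument for $Q_{\lambda/\mu}(\rho_\xi)$ is structurally identical, starting from $\braket{P_\mu|\Gamma(\rho_\xi)_+|Q_\lambda}$ and invoking the other Pieri rule $P_\mu g_1=\sum\varphi_{\lambda/\mu}(q,t)P_\lambda$, which iterates to produce $\dim'_{q,t}(\mu,\lambda)$ in place of $\dim_{q,t}(\mu,\lambda)$. No step presents a real obstacle; the only ingredient that requires a moment of care is the identification of $a_n$ on the bra side with multiplication by $p_n$ on $\Lambda$, and this is immediate from the vertex-operator OPE mentioned above. (An alternative route, avoiding Fock space entirely, would iterate Macdonald's single-variable branching rule $P_{\lambda/\mu}(y)=\psi_{\lambda/\mu}(q,t)y^{|\lambda|-|\mu|}$ under the specialization $x_1=\cdots=x_N=\xi/N$ and pass to $N\to\infty$; the combinatorial factor $\binom{N}{n}(\xi/N)^n\to \xi^n/n!$ absorbs the leading chains of single-box increments while multi-box steps are suppressed by higher powers of $1/N$.)
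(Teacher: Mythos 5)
Your proof is correct and takes essentially the same route as the paper: both realize $P_{\lambda/\mu}(\rho_\xi)$ and $Q_{\lambda/\mu}(\rho_\xi)$ as matrix elements of the degenerate vertex operator $\exp\bigl(\tfrac{1-t}{1-q}\xi a_{\pm1}\bigr)$, isolate the single surviving term of the exponential by a degree count, and iterate the $g_1$-Pieri rules to produce $\dim_{q,t}$ and $\dim'_{q,t}$. The only (cosmetic) difference is that you work with $\Gamma(\rho_\xi)_+$ and the action of $a_1$ on the bra side (multiplication by $p_1$ in $\Lambda$), whereas the paper uses $\Gamma(\rho_\xi)_-$ and the action of $\tfrac{1-t}{1-q}a_{-1}$ on kets; these are adjoint formulations of the same computation.
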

\begin{proof}
The Pieri rules say
\begin{align}
	P_{\lambda}(q,t)g_{1}(q,t)&=\sum_{\mu; \lambda\nearrow\mu}\varphi_{\mu/\lambda}(q,t)P_{\mu}(q,t), &
	Q_{\lambda}(q,t)g_{1}(q,t)&=\sum_{\mu; \lambda\nearrow\mu}\psi_{\mu/\lambda}(q,t)Q_{\mu}(q,t).
\end{align}
Noting $g_{1}(q,t)=\frac{1-t}{1-q}p_{1}$, we have
\begin{align}
	\frac{1-t}{1-q}a_{-1}\ket{P_{\lambda}(q,t)}&=\sum_{\mu; \lambda\nearrow\mu}\varphi_{\mu/\lambda}(q,t)\ket{P_{\mu}(q,t)},\\
	\frac{1-t}{1-q}a_{-1}\ket{Q_{\lambda}(q,t)}&=\sum_{\mu; \lambda\nearrow\mu}\psi_{\mu/\lambda}(q,t)\ket{Q_{\mu}(q,t)}.
\end{align}
On the other hand, the Plancherel specialization of the Macdonald symmetric functions is computed as
\begin{align}
	P_{\lambda/\mu}(\rho_{\xi};q,t)&=\braket{P_{\lambda}(q,t)|\Gamma (\rho_{\xi})_{-}|Q_{\mu}(q,t)}, & Q_{\lambda/\mu}(\rho_{\xi};q,t)&=\braket{Q_{\lambda}(q,t)|\Gamma (\rho_{\xi})_{-}|P_{\mu}(q,t)},
\end{align}
where
\begin{equation}
	\Gamma (\rho_{\xi})_{-}=\exp\left(\xi \frac{1-t}{1-q}a_{-1}\right).
\end{equation}
Then, the desired results can be verified combinatorially.
\end{proof}

For parameters $\gamma>0$ and $u\in (0,1)$, we define an operator on $\mcal{F}$
\begin{equation}
	\mcal{T}^{(\gamma)}(u):=e^{\frac{1-t}{1-q}\gamma^{2} (u-1)}\Gamma (\rho_{\gamma (1-u)})_{-}u^{D}\Gamma (\rho_{\gamma (1-u)})_{+}.
\end{equation}

\begin{prop}
For a fixed $\gamma>0$ and parameters $u,v\in (0,1)$, we have
\begin{equation}
	\mcal{T}^{(\gamma)}(u)\mcal{T}^{(\gamma)}(v)=\mcal{T}^{(\gamma)}(uv).
\end{equation}
Moreover, $\mrm{Tr}_{\mcal{F}}(\mcal{T}^{(\gamma)}(u))=1/(u;u)_{\infty}$.
\end{prop}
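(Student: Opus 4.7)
The plan is to expand the product $\mcal{T}^{(\gamma)}(u)\mcal{T}^{(\gamma)}(v)$ and reshape it into the form $\mcal{T}^{(\gamma)}(uv)$ by sliding all $\Gamma_{-}$'s to the left, all $\Gamma_{+}$'s to the right, and collapsing $u^{D}v^{D}=(uv)^{D}$. Two elementary facts are needed. First, since $[D,a_{n}]=-n\,a_{n}$, conjugation by $u^{D}$ rescales Plancherel vertex operators as
\begin{equation}
u^{D}\Gamma(\rho_{\xi})_{+}u^{-D}=\Gamma(\rho_{u^{-1}\xi})_{+},\qquad u^{D}\Gamma(\rho_{\xi})_{-}u^{-D}=\Gamma(\rho_{u\xi})_{-},
\end{equation}
because only the $n=1$ mode appears in $\Gamma(\rho_{\xi})_{\pm}$. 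Second, Plancherel parameters are additive, $\Gamma(\rho_{\xi_{1}})_{\pm}\Gamma(\rho_{\xi_{2}})_{\pm}=\Gamma(\rho_{\xi_{1}+\xi_{2}})_{\pm}$, since the generators involved commute. Using the first to push $u^{D}$ rightward through $\Gamma(\rho_{\gamma(1-v)})_{-}$ and $v^{D}$ leftward through $\Gamma(\rho_{\gamma(1-u)})_{+}$, the $\Gamma_{-}$ Plancherel parameters combine to $\gamma(1-u)+u\gamma(1-v)=\gamma(1-uv)$ and the $\Gamma_{+}$ parameters to $v\gamma(1-u)+\gamma(1-v)=\gamma(1-uv)$, giving the correct $\Gamma(\rho_{\gamma(1-uv)})_{\pm}$ dressings around $(uv)^{D}$.

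It remains to track the scalar prefactors. The single crossing between $\Gamma(\rho_{\gamma(1-u)})_{+}$ and $\Gamma(\rho_{\gamma(1-v)})_{-}$ is handled by Proposition~\ref{prop:OPE}, which (again because only $n=1$ survives for Plancherel) produces the factor $e^{\frac{1-t}{1-q}\gamma^{2}(1-u)(1-v)}$. The main bookkeeping step is then the scalar identity
\begin{equation}
(u-1)+(v-1)+(1-u)(1-v)=uv-1,
\end{equation}
which shows that this OPE factor times the two exterior exponentials from the definition of $\mcal{T}^{(\gamma)}(u)$ and $\mcal{T}^{(\gamma)}(v)$ equals the exterior exponential of $\mcal{T}^{(\gamma)}(uv)$. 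In fact this is the reason the normalization $e^{\frac{1-t}{1-q}\gamma^{2}(u-1)}$ in the definition of $\mcal{T}^{(\gamma)}(u)$ takes exactly that form.

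\textbf{Plan for the trace.} For the trace, I would exploit cyclicity to rewrite
\begin{equation}
\mrm{Tr}_{\mcal{F}}\bigl(\mcal{T}^{(\gamma)}(u)\bigr)=e^{\frac{1-t}{1-q}\gamma^{2}(u-1)}\,\mrm{Tr}_{\mcal{F}}\bigl(u^{D}\Gamma(\rho_{\gamma(1-u)})_{+}\Gamma(\rho_{\gamma(1-u)})_{-}\bigr),
\end{equation}
then apply Proposition~\ref{prop:OPE} once to reorder the product into a normally ordered vertex operator $V(\bm{\gamma})$ with $\gamma_{\pm n}=\frac{1-t^{n}}{1-q^{n}}p_{n}(\rho_{\gamma(1-u)})$, contributing an OPE factor $e^{\frac{1-t}{1-q}\gamma^{2}(1-u)^{2}}$. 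Applying the trace formula in Proposition~\ref{prop:trace_formula}, only the $n=1$ summand survives, yielding the further factor $\frac{1}{(u;u)_{\infty}}e^{\frac{1-t}{1-q}u(1-u)\gamma^{2}}$. Combining with the outer prefactor gives $1/(u;u)_{\infty}$ by the cancellation $(u-1)+(1-u)^{2}+u(1-u)=0$. The only real work, in both parts, is this matching of scalar exponentials; the rest is free-field rearrangement already encoded in Propositions~\ref{prop:trace_formula} and \ref{prop:OPE}.
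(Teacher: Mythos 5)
Your proposal is correct and follows essentially the same route as the paper: the semigroup property is obtained by exactly the paper's rearrangement (OPE crossing of $\Gamma_{+}$ past $\Gamma_{-}$, conjugation of the Plancherel vertex operators by $u^{D}$ and $v^{D}$, additivity of Plancherel parameters, and the scalar identity $(u-1)+(v-1)+(1-u)(1-v)=uv-1$), and all your exponent bookkeeping checks out. For the trace, the paper leaves the computation implicit, and your argument via cyclicity, one OPE factor $e^{\frac{1-t}{1-q}\gamma^{2}(1-u)^{2}}$, and Proposition~\ref{prop:trace_formula} with the cancellation $(u-1)+(1-u)^{2}+u(1-u)=0$ correctly supplies the missing details.
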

\begin{proof}
This is verified by a direct computation:
\begin{align}
	\mcal{T}^{(\gamma)}(u)\mcal{T}^{(\gamma)}(v)
	=&e^{\frac{1-t}{1-q}\gamma^{2}(u+v-2)}\Gamma (\rho_{\gamma (1-u)})_{-}u^{D}\Gamma (\rho_{\gamma (1-u)})_{+}\Gamma (\rho_{\gamma (1-v)})_{-} v^{D}\Gamma (\rho_{\gamma (1-v)})_{+} \\
	=&e^{\frac{1-t}{1-q}\gamma^{2}(uv-1)}\Gamma (\rho_{\gamma (1-u)})_{-}u^{D}\Gamma (\rho_{\gamma (1-v)})_{-}\Gamma (\rho_{\gamma (1-u)})_{+} v^{D}\Gamma (\rho_{\gamma (1-v)})_{+} \notag \\
	=&e^{\frac{1-t}{1-q}\gamma^{2}(uv-1)}\Gamma (\rho_{\gamma (1-u)})_{-}\Gamma (\rho_{u\gamma (1-v)})_{-}(uv)^{D}\Gamma (\rho_{v\gamma (1-u)})_{+} \Gamma (\rho_{\gamma (1-v)})_{+} \notag \\
	=&e^{\frac{1-t}{1-q}\gamma^{2}(uv-1)}\Gamma (\rho_{\gamma (1-uv)})_{-}(uv)^{D}\Gamma (\rho_{\gamma (1-uv)})_{+}. \notag
\end{align}
Therefore, the desired property holds.
\end{proof}

For $\lambda,\mu\in\mbb{Y}$, we write matrix elements of $\mcal{T}^{(\gamma)}(u)$ as
\begin{equation}
	T^{(\gamma)}_{\lambda\mu}(u)=\braket{P_{\lambda}|\mcal{T}^{(\gamma)}(u)|Q_{\mu}}, \quad \lambda,\mu\in\mbb{Y}.
\end{equation}

\begin{defn}
Let $\gamma>0$ and $\beta>0$.
The stationary $\beta$-periodic Macdonald Plancherel process of intensity $\gamma$ is a stochastic process $(\lambda (t):t\in\mbb{R})$ in $\mbb{Y}$
such that $\lambda (t+\beta)=\lambda (t)$, $t\in\mbb{R}$ a.s. whose finite dimensional reduction measure for each $0=b_{0}<b_{1}<\cdots <b_{n}<b_{n+1}=\beta $ is given by
\begin{equation}
	\mrm{Prob}\left(\lambda (b_{i})=\lambda^{i},i=0,1,\dots, n\right)=(e^{-\beta};e^{-\beta})_{\infty}\prod_{i=0}^{n}T^{(\gamma)}_{\lambda^{i},\lambda^{i+1}}(e^{-(b_{i+1}-b_{i})}).
\end{equation}
\end{defn}

\begin{prop}
Suppose that $(\lambda (t):t\in\mbb{R})$ obeys the law of the stationary $\beta$-periodic Macdonald Plancherel process of intensity $\gamma$ and take a sequence $0=b_{0}<b_{1}<\cdots <b_{n}<b_{n+1}=\beta$.
Let $\bm{\rho}^{+}=(\rho^{+}_{0},\dots, \rho^{+}_{n})$ and $\bm{\rho}^{-}=(\rho^{-}_{1},\dots, \rho^{-}_{n+1})$ be the sequences of Plancherel specializations defined by $\rho^{+}_{0}=\rho_{\gamma (1-e^{b_{n}-\beta})}$, $\rho^{+}_{i}=\rho_{\gamma (e^{b_{i}}-e^{b_{i-1}})}$, $i=1,\dots, n$, $\rho^{-}_{i}=\rho_{\gamma (e^{-b_{i-1}}-e^{-b_{i}})}$, $i=1,\dots, n+1$.
Then the marginal law for $\lambda (b_{i})$, $i=0,1,\dots, n$ is the corresponding $(n+1)$-step periodic Macdonald process at $u=e^{-\beta}$:
\begin{equation}
	\mrm{Prob}\left(\lambda (b_{i})=\lambda^{i},i=0,1,\dots, n\right)=\mbb{P}^{\bm{\rho}^{+},\bm{\rho^{-}}}_{q,t;e^{-\beta}}(\lambda^{0},\lambda^{1},\dots ,\lambda^{n}).
\end{equation}
\end{prop}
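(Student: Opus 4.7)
\emph{Proof plan.} My plan is to realise both sides as cyclic traces over $\mcal{F}$ with rank-one projectors pinning the partition values, and to identify them by an operator rearrangement followed by a cyclic telescoping. Write $u_i:=e^{-(b_{i+1}-b_i)}$, $c(u):=e^{\frac{1-t}{1-q}\gamma^2(u-1)}$, and $\lambda^{n+1}:=\lambda^0$ as required by periodicity. Introduce the rank-one projectors $\Pi_\lambda:=\ket{P_\lambda}\bra{Q_\lambda}$ and $\Pi'_\lambda:=\ket{Q_\lambda}\bra{P_\lambda}$, both idempotent since $\braket{P_\lambda|Q_\lambda}=1$. The stationary marginal is then
\begin{equation*}
\prod_{i=0}^n T^{(\gamma)}_{\lambda^i,\lambda^{i+1}}(u_i) = \mrm{Tr}_{\mcal{F}}\Bigl(\prod_{i=0}^n \Pi'_{\lambda^i}\,\mcal{T}^{(\gamma)}(u_i)\Bigr),
\end{equation*}
whereas the Macdonald weight sum $\sum_{\bm\mu}W^{\bm\rho^+,\bm\rho^-}_{q,t;e^{-\beta}}(\bm\lambda,\bm\mu)$ is, by the argument of Proposition~\ref{prop:Cauchy_identity}, the Macdonald trace $\mrm{Tr}_{\mcal{F}}(e^{-\beta D}\Gamma(\rho^+_0)_+\Gamma(\rho^-_1)_-\cdots\Gamma(\rho^-_{n+1})_-)$ with the projectors $\Pi_{\lambda^i}$ inserted at the slots of the visible partitions.

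The pivotal step is the operator identity
\begin{equation*}
\prod_{i=0}^n \mcal{T}^{(\gamma)}(u_i) = \Bigl(\prod_{i=0}^n c(u_i)\Bigr)\,\Gamma(\rho^-_1)_-\Gamma(\rho^+_1)_+\cdots\Gamma(\rho^-_{n+1})_-\,e^{-\beta D}\,\Gamma(\rho^+_0)_+,
\end{equation*}
which I would derive by iterating $u^D\Gamma(\rho_\xi)_\pm u^{-D}=\Gamma(\rho_{u^{\mp 1}\xi})_\pm$ to push every $u_i^D$ to the far right, and then sliding the accumulated $e^{-\beta D}$ through the final $\Gamma_+$. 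Setting $v_i:=e^{-b_i}$, the $i$-th $\Gamma_-$ acquires intensity $v_i\gamma(1-u_i)=\gamma(e^{-b_i}-e^{-b_{i+1}})$, matching $\rho^-_{i+1}$; the $i$-th $\Gamma_+$ for $i\le n-1$ acquires $v_{i+1}^{-1}\gamma(1-u_i)=\gamma(e^{b_{i+1}}-e^{b_i})=\rho^+_{i+1}$; and the wrap-around parameter $\gamma(e^\beta-e^{b_n})$ at $i=n$ is transformed via $\Gamma(\rho_\eta)_+\,e^{-\beta D}=e^{-\beta D}\,\Gamma(\rho_{e^{-\beta}\eta})_+$ into precisely $\gamma(1-e^{b_n-\beta})=\rho^+_0$. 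Each $\Pi'_{\lambda^i}$ acts as the scalar $u^{|\lambda^i|}$ on its $D$-eigenspace and so commutes with every $u_j^D$; hence the rearrangement extends verbatim to the projected trace, and a cyclic rotation puts it in the Macdonald form with $\Pi'$ in place of $\Pi$ at each slot.

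The remaining mismatch is dissolved by the cyclic telescoping identity
\begin{equation*}
\prod_{i=0}^n \braket{P_{\lambda^i}|A_i|Q_{\lambda^{i+1}}} = \prod_{i=0}^n \braket{Q_{\lambda^i}|A_i|P_{\lambda^{i+1}}} \qquad (\lambda^{n+1}=\lambda^0),
\end{equation*}
valid for arbitrary $A_i\in\mrm{End}(\mcal{F})$: with $c_\lambda:=\braket{P_\lambda(q,t),P_\lambda(q,t)}_{q,t}$, the definitions give $\ket{Q_\lambda}=c_\lambda^{-1}\ket{P_\lambda}$ and $\bra{Q_\lambda}=c_\lambda^{-1}\bra{P_\lambda}$, so each factor above differs by the scalar $c_{\lambda^i}/c_{\lambda^{i+1}}$, and the cyclic product telescopes to $1$. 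Applied to the common operator blocks sitting between the projectors, this converts the $\Pi'$-trace to the $\Pi$-trace and yields $\prod_i T^{(\gamma)}_{\lambda^i,\lambda^{i+1}}(u_i)=\prod_i c(u_i)\sum_{\bm\mu}W^{\bm\rho^+,\bm\rho^-}_{q,t;e^{-\beta}}(\bm\lambda,\bm\mu)$. Summing over $\bm\lambda$ and using $\mrm{Tr}_{\mcal{F}}(\mcal{T}^{(\gamma)}(e^{-\beta}))=1/(e^{-\beta};e^{-\beta})_\infty$ pins the partition function as $\Pi_{q,t;e^{-\beta}}(\bm\rho^+;\bm\rho^-)^{-1}=(e^{-\beta};e^{-\beta})_\infty\prod_i c(u_i)$; multiplying the previous display by $(e^{-\beta};e^{-\beta})_\infty$ then delivers exactly $\mbb{P}^{\bm\rho^+,\bm\rho^-}_{q,t;e^{-\beta}}(\lambda^0,\dots,\lambda^n)$, as claimed. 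The main obstacle is the careful bookkeeping in the operator rearrangement, in particular verifying that the wrap-around shift produces precisely the intensity defining $\rho^+_0$ rather than an inequivalent rescaling; the cyclic telescoping is then the clean algebraic reason why the $P$-to-$Q$ matrix elements natural on the Plancherel side and the $Q$-to-$P$ ones natural on the Macdonald side coincide along a closed time loop.
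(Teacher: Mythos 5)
Your proposal is correct and follows essentially the same route as the paper: realize the cyclic product of transfer matrices as a trace over $\mcal{F}$ with rank-one projectors inserted, push the degree operators through the $\Gamma_{\pm}$'s via $u^{D}\Gamma(\rho_{\xi})_{\pm}u^{-D}=\Gamma(\rho_{u^{\mp 1}\xi})_{\pm}$ and the cyclicity of the trace, and match the resulting intensities with the stated Plancherel specializations. Your two supplementary checks --- the cyclic telescoping that reconciles the $\ket{Q_{\lambda}}\bra{P_{\lambda}}$ projectors natural on the transfer-matrix side with the $\ket{P_{\lambda}}\bra{Q_{\lambda}}$ ones in the Macdonald weight, and the identification of the proportionality constant via $\mrm{Tr}_{\mcal{F}}(\mcal{T}^{(\gamma)}(e^{-\beta}))=1/(e^{-\beta};e^{-\beta})_{\infty}$ --- are correct and make explicit two points the paper leaves implicit behind the word ``proportional.''
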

\begin{proof}
The marginal probability law of interest is proportional to
\begin{align}
	\mrm{Tr}_{\mcal{F}}\Bigl(& e^{-(\beta-b_{n})D}\Gamma (\rho_{\gamma (1-e^{-(\beta-b_{n})})})_{+}\ket{Q_{\lambda^{0}}}\bra{P_{\lambda^{ 0}}}\Gamma (\rho_{\gamma (1-e^{-b_{1}})})_{-} \\
	&\times e^{-b_{1}D}\Gamma (\rho_{\gamma (1-e^{-b_{1}})})_{+}\ket{Q_{\lambda^{1}}}\bra{P_{\lambda^{1}}}\Gamma (\rho_{\gamma (1-e^{-(b_{2}-b_{1})})})_{-} \notag \\
	&\times e^{-(b_{2}-b_{1})D}\Gamma (\rho_{\gamma (1-e^{-(b_{2}-b_{1})})})_{+}\ket{Q_{\lambda^{2}}}\bra{P_{\lambda^{2}}}\Gamma (\rho_{\gamma (1-e^{-(b_{3}-b_{2})})})_{-} \notag \\
	&\cdots \times e^{-(b_{n}-b_{n-1})D}\Gamma (\rho_{\gamma (1-e^{-(b_{n}-b_{n-1})})})_{+}\ket{Q_{\lambda^{n}}}\bra{P_{\lambda^{n}}}\Gamma (\rho_{\gamma (1-e^{-(\beta-b_{n})})})_{-}\Bigr). \notag
\end{align}
Recall that the degree operator $D$ commutes with a projection $\ket{Q_{\lambda}}\bra{P_{\lambda}}$. By moving the exponentiated degree operators except for the one in the first line to the right and using the cyclic property of the trace, we see that it is identical to
\begin{align}
	\mrm{Tr}_{\mcal{F}}\Bigl(& e^{-\beta D}\Gamma (\rho_{\gamma (1-e^{-(\beta-b_{n})})})_{+}\ket{Q_{\lambda^{0}}}\bra{P_{\lambda^{0}}}\Gamma (\rho_{\gamma (1-e^{-b_{1}})})_{-} \\
	&\times \Gamma (\rho_{\gamma (e^{b_{1}}-1)})_{+}\ket{Q_{\lambda^{1}}}\bra{P_{\lambda^{1}}}\Gamma (\rho_{\gamma (e^{-b_{1}}-e^{-b_{2})})})_{-} \notag \\
	&\times \Gamma (\rho_{\gamma (e^{b_{2}}-e^{b_{1}})})_{+}\ket{Q_{\lambda^{2}}}\bra{P_{\lambda^{2}}}\Gamma (\rho_{\gamma (e^{-b_{2}}-e^{-b_{3}})})_{-} \notag \\
	&\cdots \times \Gamma (\rho_{\gamma (e^{b_{n}}-e^{b_{n-1}})})_{+}\ket{Q_{\lambda^{n}}}\bra{P_{\lambda^{n}}}\Gamma (\rho_{\gamma (e^{-b_{n}}-e^{-\beta})})_{-}\Bigr), \notag
\end{align}
which is proportional to the desired $(n+1)$-step periodic Macdonald process.
\end{proof}

\section{Observables of periodic Macdonald measures}
\label{sect:observables}
For a random variable $f:\mbb{Y}\to\mbb{F}$, we define the operator $\mcal{O}(f)\in\mrm{End}(\mcal{F})$ by
\begin{equation}
	\mcal{O}(f):=\sum_{\lambda\in\mbb{Y}}f(\lambda)\ket{P_{\lambda}}\bra{Q_{\lambda}},
\end{equation}
where the function $f$ is identified with the spectrum of the operator $\mcal{O}(f)$.
We also write, for Macdonald positive specializations $\rho^{+}$ and $\rho^{-}$,
\begin{equation}
	\psi^{\rho^{+},\rho^{-}}(f):=\Gamma (\rho^{+})_{+}\mcal{O}(f)\Gamma (\rho^{-})_{-}.
\end{equation}
Then, the moment of random variables $f_{1},\dots, f_{N}$ under an $N$-step periodic Macdonald process $\mbb{P}^{\bm{\rho}^{+},\bm{\rho}^{-}}_{q,t;u}$ is identified with the following trace over the Fock space $\mcal{F}$:
\begin{equation}
\label{eq:expectation_free_field}
	\mbb{E}^{\bm{\rho}^{+},\bm{\rho}^{-}}_{q,t;u}[f[1]\cdots f[N]]=\frac{\mrm{Tr}_{\mcal{F}}\left(u^{D}\psi^{\rho^{+}_{0},\rho^{-}_{1}}(f_{1})\cdots \psi^{\rho^{+}_{N-1},\rho^{-}_{N}}(f_{N})\right)}{\mrm{Tr}_{\mcal{F}}\left(u^{D}\psi^{\rho^{+}_{0},\rho^{-}_{1}}(1)\cdots \psi^{\rho^{+}_{N-1},\rho^{-}_{N}}(1)\right)},
\end{equation}
where $1$ is the unit function $1(\lambda)=1$, $\lambda\in\mbb{Y}$.

\subsection{Proof of Theorem \ref{thm:first_series}: First series of observables}
\label{subsect:first_series}
Comparing the values of the random variable $\mcal{E}_{r}$ and the eigenvalues of the $r$-th Macdonald operator in Sect.\ \ref{sect:preliminaries}, we can conclude that $\mcal{O}(\mcal{E}_{r})=t^{r}\what{E}_{r}$.

\begin{proof}[Proof of Theorem \ref{thm:first_series}]
The proof is very similar to that presented in \cite{SK2019} except that the vacuum expectation value there is replaced by the trace over the Fock space here.
From Proposition \ref{prop:free_field_Macdonald_operator}, it follows that
\begin{equation}
	\mcal{O}(\mcal{E}_{r})=\frac{1}{r!}\int D^{r}\bm{z}\det\left(\frac{1}{z_{i}-t^{-1}z_{j}}\right)_{1\le i,j\le r}\no{\eta(z_{1})\cdots \eta (z_{r})}.
\end{equation}
Now, the product of operators $\psi^{\rho^{+}_{0},\rho^{-}_{1}}(\mcal{E}_{r_{1}})\cdots \psi^{\rho^{+}_{N-1},\rho^{-}_{N}}(\mcal{E}_{r_{N}})$ is not normally ordered with respect to the Heisenberg algebra. The first step is to rearrange it into the normally ordered product.
By a standard computation of OPE, Proposition \ref{prop:OPE}, we have
\begin{align}
	&\psi^{\rho^{+}_{0},\rho^{-}_{1}}(\mcal{E}_{r_{1}})\cdots \psi^{\rho^{+}_{N-1},\rho^{-}_{N}}(\mcal{E}_{r_{N}}) \\
	&=\frac{1}{\bm{r}!}\int D^{\bm{r}}\bm{z} \prod_{a=1}^{N}\det\left(\frac{1}{z^{(a)}_{i}-t^{-1}z^{(a)}_{j}}\right)_{1\le i,j\le r_{a}}\prod_{a<b}\tilde{\Pi}_{q,t;u}(\rho^{+}_{a};\rho^{-}_{b}) \notag \\
	&\hspace{50pt}\times\prod_{a<b}\exp\left(\sum_{n>0}\frac{1-t^{-n}}{n}p_{n}(\rho^{+}_{a})\sum_{i=1}^{r_{b}}(z^{(b)}_{i})^{n}\right) \notag \\
	&\hspace{50pt}\times \prod_{a\ge b}\exp\left(-\sum_{n>0}\frac{1-t^{n}}{n}p_{n}(\rho^{-}_{a})\sum_{i=1}^{r_{b}}(z^{(b)}_{i})^{-n}\right) \notag \\
	&\hspace{50pt}\times \prod_{a<b}\prod_{i=1}^{r_{a}}\prod_{j=1}^{r_{b}}\frac{(1-z^{(b)}_{j}/z^{(a)}_{i})(1-qt^{-1}z^{(b)}_{j}/z^{(a)}_{i})}{(1-qz^{(b)}_{j}/z^{(a)}_{i})(1-t^{-1}z^{(b)}_{j}/z^{(a)}_{i})} V(\bm{\gamma}), \notag
\end{align}
where
\begin{align}
	\gamma_{n}&=-(1-t^{n})\sum_{a=1}^{N}\sum_{i=1}^{r_{a}}(z^{(a)}_{i})^{-n}+\frac{1-t^{n}}{1-q^{n}}\sum_{a=0}^{N-1}p_{n}(\rho^{+}_{a}), \\
	\gamma_{-n}&=(1-t^{-n})\sum_{a=1}^{N}\sum_{i=1}^{r_{a}}(z^{(a)}_{i})^{n}+\frac{1-t^{n}}{1-q^{n}}\sum_{a=1}^{N}p_{n}(\rho^{-}_{a})
\end{align}
for $n>0$.
Applying the trace formula for a vertex operator in Proposition \ref{prop:trace_formula}, we can see that the desired result follows.
\end{proof}

We saw in Corollary \ref{cor:expectation_first_series} that, when $N=1$, the expectation value gets simpler.
Let us see a particular case where the specializations $\rho^{+}$ and $\rho^{-}$ are the same Plancherel specialization $\rho_{\gamma (1-u)}$ for $\gamma>0$ as in Sect.\ \ref{eq:Plancherel_process}.
Then, the kernel of the determinant becomes
\begin{equation}
	K_{q,t;u;\mcal{E}}^{\rho_{\gamma (1-u)},\rho_{\gamma (1-u)}}(z,w)=\frac{e^{\gamma (1-t^{-1})z- \gamma (1-t)z^{-1}}}{z-t^{-1}w}.
\end{equation}

\begin{exam}
Let us compute the specific case $r=1$.
In this case,
\begin{align}
	\mbb{E}^{\rho_{\gamma (1-u)},\rho_{\gamma (1-u)}}_{q,t;u}[\mcal{E}_{1}]&=\frac{1}{1-t^{-1}}\frac{(u;u)_{\infty}(qt^{-1}u;u)_{\infty}}{(qu;u)_{\infty}(t^{-1}u;u)_{\infty}}\int \frac{dz}{2\pi\sqrt{-1}z}e^{\gamma (1-t^{-1})z-\gamma (1-t)z^{-1}} \\
	&=\frac{1}{1-t^{-1}}\frac{(u;u)_{\infty}(qt^{-1}u;u)_{\infty}}{(qu;u)_{\infty}(t^{-1}u;u)_{\infty}}\sum_{n=0}^{\infty}\frac{(\gamma^{2}(t^{-1}-1)(1-t))^{n}}{(n!)^{2}}. \notag
\end{align}
Recall that the $0$-th order modified Bessel function of the first kind is defined by
\begin{equation}
	I_{0}(z)=\sum_{n=0}^{\infty}\frac{(z^{2}/4)^{n}}{(n!)^{2}}.
\end{equation}
Then, the desired expectation value is expressed as
\begin{equation}
	\mbb{E}^{\rho_{\gamma (1-u)},\rho_{\gamma (1-u)}}_{q,t;u}[\mcal{E}_{1}]=\frac{I_{0}(2\gamma\sqrt{(1-t)(t^{-1}-1)})}{1-t^{-1}}\frac{(u;u)_{\infty}(qt^{-1}u;u)_{\infty}}{(qu;u)_{\infty}(t^{-1}u;u)_{\infty}}.
\end{equation}

Let us further take the Hall--Littlewood limit $q\to 0$. Under this limit, we have
\begin{equation}
	\mcal{E}_{1}(\lambda)\to \frac{t^{-\pr{\lambda}_{1}}}{1-t^{-1}}.
\end{equation}
Therefore, we can see that
\begin{equation}
	\mbb{E}^{\rho_{\gamma (1-u)},\rho_{\gamma (1-u)}}_{0,t;u}\left[t^{-\pr{\lambda}_{1}}\right]=I_{0}(2\gamma\sqrt{(1-t)(t^{-1}-1)})\frac{(u;u)_{\infty}}{(t^{-1}u;u)_{\infty}}.
\end{equation}
\end{exam}

In the subsequent subsections, we focus on a periodic Macdonald measure; the case of $N=1$. It is not difficult to see that the results below can be extended to an $N$-step periodic Macdonald processes of $N\ge 2$.
\subsection{Second series of observables}
\label{subsect:second_series}
The next series of observables is obtained from the first one by inverting parameters $q$ and $t$.
For $r\in\mbb{N}$, we consider the random variable $\mcal{E}^{\prime}_{r}:\mbb{Y}\to\mbb{F}$ defined by
$\mcal{E}^{\prime}_{r}(\lambda)=e_{r}(q^{-\lambda}t^{\rho -1})$, $\lambda\in\mbb{Y}$.
Then the corresponding operator is $\mcal{O}(\mcal{E}^{\prime}_{r})=t^{-r}\what{E}^{\prime}_{r}$, $r\in\mbb{N}$.

\begin{thm}
\label{thm:second_series}
For $r\in\mbb{N}$, the expectation value of $\mcal{E}^{\prime}_{r}$ under a periodic Macdonald measure $\mbb{P}^{\rho^{+},\rho^{-}}_{q,t;u}$ is computed as
\begin{align}
	\mbb{E}^{\rho^{+},\rho^{-}}_{q,t;u}[\mcal{E}^{\prime}_{r}]
	=&\frac{1}{r!}\int D^{r}\bm{z} \det\left(K^{\rho^{+},\rho^{-}}_{q,t;u;\mcal{E}^{\prime}}(z_{i},z_{j})\right)_{1\le i,j\le r}\Delta_{q^{-1},t;u}(\bm{z}),
\end{align}
where
\begin{align}
	K^{\rho^{+},\rho^{-}}_{q,t;u;\mcal{E}^{\prime}}(z,w)=\frac{1}{z-tw}\prod_{n>0}\exp\Biggl(&-\frac{1-t^{-n}}{1-u^{n}}\frac{(t/q)^{n/2}p_{n}(\rho^{+})}{n}z^{n}\\
	&+\frac{1-t^{n}}{1-u^{n}}\frac{(t/q)^{n/2}p_{n}(\rho^{-})}{n}z^{-n}\Biggr). \notag
\end{align}
\end{thm}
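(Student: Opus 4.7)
The strategy is to mimic the proof of Theorem~\ref{thm:first_series}, replacing the vertex operator $\eta$ by $\xi$ and the free field realization of $\what{E}_{r}$ by that of $\what{E}^{\prime}_{r}$ recalled in Subsection~\ref{subsect:free_field_operators}. Since $\what{E}^{\prime}_{r}$ is diagonalized in the Macdonald basis with eigenvalue $e_{r}(q^{-\lambda}t^{\rho})$ and $e_{r}(q^{-\lambda}t^{\rho-1})=t^{-r}e_{r}(q^{-\lambda}t^{\rho})$, we have $\mcal{O}(\mcal{E}^{\prime}_{r})=t^{-r}\what{E}^{\prime}_{r}$, whence
\begin{equation*}
\mcal{O}(\mcal{E}^{\prime}_{r})=\frac{1}{r!}\int D^{r}\bm{z}\,\det\left(\frac{1}{z_{i}-tz_{j}}\right)_{1\le i,j\le r}\no{\xi(z_{1})\cdots\xi(z_{r})}.
\end{equation*}

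Plugging this into (\ref{eq:expectation_free_field}) at $N=1$ reduces the problem to computing
$\mrm{Tr}_{\mcal{F}}\left(u^{D}\,\Gamma(\rho^{+})_{+}\no{\xi(z_{1})\cdots\xi(z_{r})}\Gamma(\rho^{-})_{-}\right)$
and dividing by the partition function, which by Proposition~\ref{prop:Cauchy_identity} at $N=1$ equals $\tilde{\Pi}_{q,t;u}(\rho^{+};\rho^{-})/(u;u)_{\infty}$. I would first bring the operator product into the normally ordered form by repeated use of the OPE formula in Proposition~\ref{prop:OPE}. There are three types of contractions: $\Gamma(\rho^{+})_{+}$ against each $\xi(z_{i})$, each $\xi(z_{i})$ against $\Gamma(\rho^{-})_{-}$, and $\xi(z_{i})$ against $\xi(z_{j})$ for $i<j$. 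The first two assemble into the single-variable exponential factor that eventually makes up the non-rational part of the kernel $K^{\rho^{+},\rho^{-}}_{q,t;u;\mcal{E}^{\prime}}$ (with the characteristic $(t/q)^{n/2}$ coming from the symbols defining $\xi$), while the third yields the rational factor
$\prod_{i<j}\frac{(1-z_{j}/z_{i})(1-q^{-1}tz_{j}/z_{i})}{(1-tz_{j}/z_{i})(1-q^{-1}z_{j}/z_{i})}$,
using the identity $(1-q^{n})(1-t^{-n})(t/q)^{n}=(1-t^{n})(1-q^{-n})$ to rewrite the exponent of the contraction.

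Having obtained a single normally ordered vertex operator $V(\bm{\gamma})$ with
$\gamma_{n}=(1-t^{n})(t/q)^{n/2}\sum_{i}z_{i}^{-n}+\tfrac{1-t^{n}}{1-q^{n}}p_{n}(\rho^{+})$ and
$\gamma_{-n}=-(1-t^{-n})(t/q)^{n/2}\sum_{i}z_{i}^{n}+\tfrac{1-t^{n}}{1-q^{n}}p_{n}(\rho^{-})$,
I would apply the trace formula Proposition~\ref{prop:trace_formula}. Expanding $\gamma_{-n}\gamma_{n}$ and grouping terms gives: (i) the $\rho^{+}$-$\rho^{-}$ cross term reproduces the factor $\tilde{\Pi}_{q,t;u}(\rho^{+};\rho^{-})$, which cancels with the denominator; (ii) the $z$-$\rho^{\pm}$ cross terms promote the pre-normal-ordering single-variable exponentials to their $u$-deformed versions $\exp\bigl(\tfrac{1-t^{\mp n}}{1-u^{n}}\tfrac{(t/q)^{n/2}p_{n}(\rho^{\pm})}{n}z^{\mp n}\bigr)$, giving the claimed kernel; (iii) the $z_{i}$-$z_{j}$ terms, combined with the pre-existing rational factor from the $\xi$-$\xi$ OPE, $u$-deform that factor into $\Delta_{q^{-1},t;u}(\bm{z})$ (the $u$-periodic completion precisely matches the $N=1$ specialization of the second line in the definition of $\Delta_{p_{1},p_{2};u}$ with $p_{1}=q^{-1}$, $p_{2}=t$).

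The only technical point that requires care is the aforementioned identity $(1-q^{n})(1-t^{-n})(t/q)^{n}=(1-t^{n})(1-q^{-n})$, which is the algebraic reason that $\Delta_{q,t^{-1};u}$ (appearing with $\eta$) gets swapped for $\Delta_{q^{-1},t;u}$ (appearing with $\xi$); everything else is a direct transcription of the bookkeeping already carried out for Theorem~\ref{thm:first_series}. No new conceptual input beyond the free field realization of $\what{E}^{\prime}_{r}$ and the trace formula is needed.
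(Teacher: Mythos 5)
Your overall strategy is exactly the paper's: identify $\mcal{O}(\mcal{E}^{\prime}_{r})=t^{-r}\what{E}^{\prime}_{r}$, insert the free field realization into the trace expression (\ref{eq:expectation_free_field}), normal-order via Proposition~\ref{prop:OPE}, and apply the trace formula of Proposition~\ref{prop:trace_formula}; your identification of the normally ordered operator $V(\bm{\gamma})$ and the identity $(1-q^{n})(1-t^{-n})(t/q)^{n}=(1-t^{n})(1-q^{-n})$ are both correct and are indeed the crux of why $\Delta_{q^{-1},t;u}$ replaces $\Delta_{q,t^{-1};u}$.

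There is, however, one concrete error in your bookkeeping. The operator appearing in the free field realization is the \emph{already normally ordered} product $\no{\xi(z_{1})\cdots\xi(z_{r})}=\xi(\bm{z})_{-}\xi(\bm{z})_{+}$, so there is no ``$\xi(z_{i})$ against $\xi(z_{j})$'' contraction at the OPE stage: the only contractions are $\Gamma(\rho^{+})_{+}$ against $\xi(\bm{z})_{-}$, $\xi(\bm{z})_{+}$ against $\Gamma(\rho^{-})_{-}$, and $\Gamma(\rho^{+})_{+}$ against $\Gamma(\rho^{-})_{-}$. The rational factor $\prod_{i<j}\frac{(1-z_{j}/z_{i})(1-q^{-1}tz_{j}/z_{i})}{(1-tz_{j}/z_{i})(1-q^{-1}z_{j}/z_{i})}$ you introduce is spurious; it cannot be ``$u$-deformed into'' $\Delta_{q^{-1},t;u}(\bm{z})$, because at $N=1$ every $q$-Pochhammer in $\Delta_{q^{-1},t;u}(\bm{z})$ (only the $a\ge b$ line of the definition survives) carries at least one power of $u$, whereas your extra factor is of order $u^{0}$. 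Had you kept it, your final formula would differ from the theorem by exactly that factor. The correct accounting is that the whole of $\Delta_{q^{-1},t;u}(\bm{z})$ arises from the $z_{i}$--$z_{j}$ cross terms of $\gamma_{-n}\gamma_{n}$ in the trace formula, summed over \emph{all} ordered pairs $(i,j)$; such mutual contractions between distinct $z$-blocks (your three-contraction picture) only occur for $N\ge 2$, which is why the $a<b$ line of $\Delta_{p_{1},p_{2};u}$ contains $u^{0}$ terms while the $a\ge b$ line does not. With this correction the rest of your argument goes through and coincides with the paper's proof.
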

\begin{proof}
We write $\no{\xi (z_{1})\cdots \xi (z_{r})}=\xi (\bm{z})_{-}\xi (\bm{z})_{+}$, where
\begin{equation}
	\xi (\bm{z})_{\pm}=\exp\left(\pm \sum_{n>0}\frac{1-t^{\pm n}}{n}(t/q)^{n/2}a_{\pm n}\sum_{i=1}^{r}z_{i}^{\mp n}\right).
\end{equation}
Then, by a standard computation of OPE, Proposition \ref{prop:OPE}, we have
\begin{align}
	&\Gamma (\rho^{+})_{+}\xi (\bm{z})_{-} \xi (\bm{z})_{+} \Gamma (\rho^{-})_{-} \\
	&=\tilde{\Pi}_{q,t;0}(\rho^{+};\rho^{-})\exp\left(-\sum_{n>0}\frac{1-t^{-n}}{n}(t/q)^{n/2}p_{n}(\rho^{+})\sum_{i=1}^{r}z_{i}^{n}\right)  \notag\\
	&\hspace{70pt}\times \exp\left(\sum_{n>0}\frac{1-t^{n}}{n}(t/q)^{n/2}p_{n}(\rho^{-})\sum_{i=1}^{r}z_{i}^{-n}\right) \notag \\
	&\hspace{70pt} \times \xi (\bm{z})_{-}\Gamma (\rho^{-})_{-}\Gamma (\rho^{+})_{+} \xi (\bm{z})_{+}.\notag
\end{align}
We can see that the operator in the right hand side is
\begin{equation}
	V (\bm{\gamma})=\xi (\bm{z})_{-}\Gamma (\rho^{-})_{-}\Gamma (\rho^{+})_{+} \xi (\bm{z})_{+},
\end{equation}
where
\begin{align}
	\gamma_{n}&=(1-t^{n})(t/q)^{n/2}\sum_{i=1}^{r}z_{i}^{-n}+\frac{1-t^{n}}{1-q^{n}}p_{n}(\rho^{+}), \\
	\gamma_{-n}&=-(1-t^{-n})(t/q)^{n/2}\sum_{i=1}^{r}z_{i}^{n}+\frac{1-t^{n}}{1-q^{n}}p_{n}(\rho^{-})
\end{align}
for $n>0$.
The trace formula in Proposition \ref{prop:trace_formula} gives the desired result.
\end{proof}

If we take the Plancherel specialization $\rho^{\pm}=\rho_{\gamma (1-u)}$ with $\gamma>0$, the kernel function becomes
\begin{equation}
	K^{\rho_{\gamma (1-u)},\rho_{\gamma (1-u)}}_{q,t;u;\mcal{E}^{\prime}}(z,w)=\frac{e^{\gamma (t/q)^{1/2}\left((t^{-1}-1)z+(1-t)z^{-1}\right)}}{z-tw}.
\end{equation}

\begin{exam}
In the case of $r=1$, we have
\begin{align}
	\mbb{E}^{\rho_{\gamma (1-u)},\rho_{\gamma (1-u)}}_{q,t;u}[\mcal{E}^{\prime}_{1}]=\frac{I_{0}(2\gamma (1-t)\sqrt{q^{-1}})}{1-t}\frac{(u;u)_{\infty}(tq^{-1}u;u)_{\infty}}{(tu;u)_{\infty}(q^{-1}u;u)_{\infty}}.
\end{align}
In the $q$-Whittaker limit, $t\to 0$, we have
\begin{equation}
	\mcal{E}^{\prime}_{1}(\lambda)=\sum_{i\ge 1}q^{-\lambda_{i}}t^{i-1}\to q^{-\lambda_{1}}.
\end{equation}
Therefore,
\begin{equation}
	\mbb{E}^{\rho_{\gamma (1-u)},\rho_{\gamma (1-u)}}_{q,t;u}[q^{-\lambda_{1}}]=I_{0}(2\gamma\sqrt{q^{-1}})\frac{(u;u)_{\infty}}{(q^{-1}u;u)_{\infty}}.
\end{equation}
\end{exam}

\subsection{Third series of observables}
For $r\in\mbb{N}$, we consider $\mcal{G}_{r}:\mbb{Y}\to\mbb{F}$ defined by $\mcal{G}_{r}(\lambda)=g_{r}(q^{\lambda}t^{-\rho};q,t)$.
Following the argument in Subsect.\ \ref{subsect:free_field_operators}, we see that the corresponding operator is
\begin{equation}
	\mcal{O}(\mcal{G}_{r})=\hat{G}_{r}=\frac{(-1)^{r}}{r!}\int D^{r}\bm{z}\det\left(\frac{1}{z_{i}-qz_{j}}\right)_{1\le i,j\le r} \no{\eta (z_{1})\cdots \eta (z_{r})},
\end{equation}
which is almost the same as $\mcal{O}(\mcal{E}_{r})$ except for the determinant part and the overall sign.
Therefore, the following theorem immediately follows.

\begin{thm}
For $r\in\mbb{N}$, the expectation value of $\mcal{G}_{r}$ under a periodic Macdonald measure $\mbb{P}^{\rho^{+},\rho^{-}}_{q,t;u}$ is
\begin{align}
	\mbb{E}^{\rho^{+},\rho^{-}}_{q,t;u}[\mcal{G}_{r}]
	=&\frac{(-1)^{r}}{r!}\int D^{r}\bm{z} \det\left(K^{\rho^{+},\rho^{-}}_{q,t;u;\mcal{G}}(z_{i},z_{j})\right)_{1\le i,j\le r}\Delta_{q,t^{-1};u}(\bm{z}),
\end{align}
where
\begin{equation}
	K^{\rho^{+},\rho^{-}}_{q,t;u;\mcal{G}}(z,w)=\frac{1}{z-qw}\prod_{n>0}\exp\left(\frac{1-t^{-n}}{1-u^{n}}\frac{p_{n}(\rho^{+})}{n}z^{n}-\frac{1-t^{n}}{1-u^{n}}\frac{p_{n}(\rho^{-})}{n}z^{-n}\right).
\end{equation}
\end{thm}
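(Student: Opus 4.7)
The plan is to follow verbatim the template used for Theorem~\ref{thm:first_series} in Subsection~\ref{subsect:first_series}, specialized to $N=1$. What makes this essentially automatic is the observation, already highlighted by the author just before the statement, that the free field realizations $\mcal{O}(\mcal{E}_{r})=t^{r}\what{E}_{r}$ and $\mcal{O}(\mcal{G}_{r})=\what{G}_{r}$ are built from \emph{the same} vertex operators $\eta(z_{1}),\dots,\eta(z_{r})$; they differ only by the scalar determinantal factor, which is $\det(1/(z_{i}-qz_{j}))$ instead of $\det(1/(z_{i}-t^{-1}z_{j}))$, and by an overall sign $(-1)^{r}$.

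First I would substitute into the trace identity~(\ref{eq:expectation_free_field}) at $N=1$, namely
\[
\mbb{E}^{\rho^{+},\rho^{-}}_{q,t;u}[\mcal{G}_{r}]=\frac{\mrm{Tr}_{\mcal{F}}\bigl(u^{D}\,\Gamma(\rho^{+})_{+}\mcal{O}(\mcal{G}_{r})\Gamma(\rho^{-})_{-}\bigr)}{\mrm{Tr}_{\mcal{F}}\bigl(u^{D}\,\Gamma(\rho^{+})_{+}\Gamma(\rho^{-})_{-}\bigr)},
\]
plug in the formula for $\what{G}_{r}$ from Subsection~\ref{subsect:free_field_operators}, and pull the scalar determinant and the functional $\int D^{r}\bm{z}$ outside the trace. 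What remains inside the trace is exactly the operator $u^{D}\Gamma(\rho^{+})_{+}\no{\eta(z_{1})\cdots\eta(z_{r})}\Gamma(\rho^{-})_{-}$ that is already analyzed in the proof of Theorem~\ref{thm:first_series} at $N=1$, with $r_{1}=r$.

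Next I would invoke the OPE (Proposition~\ref{prop:OPE}) to put $\Gamma(\rho^{+})_{+}\,\no{\eta(\bm{z})}\,\Gamma(\rho^{-})_{-}$ into normal order $V(\bm{\gamma})$ with exactly the same $\gamma_{\pm n}$ as in Subsection~\ref{subsect:first_series} at $N=1$. All the scalar OPE prefactors (the $\eta$--$\Gamma$ contractions producing the specialization-dependent exponentials, and the $\eta$--$\eta$ contractions ultimately assembling into $\Delta_{q,t^{-1};u}(\bm{z})$) are therefore identical to those appearing there. Applying the trace formula (Proposition~\ref{prop:trace_formula}) produces an overall factor $1/(u;u)_{\infty}$ together with an exponential in $\gamma_{-n}\gamma_{n}$. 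The $1/(u;u)_{\infty}$ cancels against the same factor hidden in the partition function $\Pi_{q,t;u}(\rho^{+};\rho^{-})=\tilde{\Pi}_{q,t;u}(\rho^{+};\rho^{-})/(u;u)_{\infty}$, while the remaining exponentials combine with the $\eta$--$\Gamma$ prefactors to rebuild precisely the product $\prod_{n>0}\exp(\cdots)$ occurring in the definition of $K^{\rho^{+},\rho^{-}}_{q,t;u;\mcal{G}}(z,w)$.

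Putting the pieces back together, the scalar determinant $\det(1/(z_{i}-qz_{j}))$ inherited from $\what{G}_{r}$ couples with the exponential kernel parts to form $\det(K^{\rho^{+},\rho^{-}}_{q,t;u;\mcal{G}}(z_{i},z_{j}))$, the $\eta$--$\eta$ pairings give the universal factor $\Delta_{q,t^{-1};u}(\bm{z})$, and the overall sign $(-1)^{r}$ is retained. I do not expect a genuine obstacle here: the only care required is bookkeeping, namely that the expansion convention $(z-qw)^{-1}=\sum_{k\ge 0}z^{-1}(qw/z)^{k}$ (cf.\ Proposition~\ref{prop:free_field_Macdonald_operator}) is used consistently when the determinant is moved through the OPE, and that the $\tilde{\Pi}$-type factors cancel cleanly between the numerator and the denominator of~(\ref{eq:expectation_free_field}). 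Because the difference between $\mcal{O}(\mcal{E}_{r})$ and $\mcal{O}(\mcal{G}_{r})$ at the operator level is confined to a scalar that commutes through the trace, the statement follows by direct transcription from the proof of Theorem~\ref{thm:first_series}.
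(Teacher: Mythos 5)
Your proposal is correct and is exactly the paper's own argument: the paper proves this theorem by noting that $\mcal{O}(\mcal{G}_{r})$ differs from $\mcal{O}(\mcal{E}_{r})$ only in the scalar determinant factor (with $1/(z_{i}-qz_{j})$ in place of $1/(z_{i}-t^{-1}z_{j})$) and the overall sign $(-1)^{r}$, so the OPE contractions, the trace formula, and hence the exponential part of the kernel and the universal measure $\Delta_{q,t^{-1};u}(\bm{z})$ carry over verbatim from the proof of Theorem~\ref{thm:first_series}. No gap.
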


\subsection{Fourth series of observables}
\label{subsect:fourth_series}
The final series of observables we consider is that of $\mcal{G}^{\prime}_{r}:\mbb{Y}\to\mbb{F}$, $r\in\mbb{N}$ defined by $\mcal{G}^{\prime}_{r}(\lambda)=g_{r}(q^{-\lambda}t^{\rho};q^{-1},t^{-1})=g_{r}(q^{-\lambda+1}t^{\rho-1};q,t)$, $\lambda\in\mbb{Y}$.
The corresponding operator is
\begin{equation}
	\mcal{O}(\mcal{G}^{\prime}_{r})=\hat{G}_{r}^{\prime}=\frac{(-1)^{r}}{r!}\int D^{r}\bm{z}\det\left(\frac{1}{z_{i}-q^{-1}z_{j}}\right)_{1\le i,j\le r} \no{\xi (z_{1})\cdots \xi (z_{r})},
\end{equation}
which is almost the same as $\mcal{O}(\mcal{E}^{\prime}_{r})$.
Therefore, we have the following.
\begin{thm}
For $r\in\mbb{N}$, the expectation value of $\mcal{G}^{\prime}_{r}$ under a periodic Macdonald measure $\mbb{P}^{\rho^{+},\rho^{-}}_{q,t;u}$ is
\begin{align}
	\mbb{E}^{\rho^{+},\rho^{-}}_{q,t;u}[\mcal{G}^{\prime}_{r}]
	=&\frac{(-1)^{r}}{r!}\int D^{r}\bm{z} \det\left(K^{\rho^{+},\rho^{-}}_{q,t;u;\mcal{G}^{\prime}}(z_{i},z_{j})\right)_{1\le i,j\le r}\Delta_{q^{-1},t;u}(\bm{z}),
\end{align}
where
\begin{align}
	K^{\rho^{+},\rho^{-}}_{q,t;u;\mcal{G}^{\prime}}(z,w)=\frac{1}{z-q^{-1}w}\prod_{n>0}\exp\Biggl(&-\frac{1-t^{-n}}{1-u^{n}}\frac{(t/q)^{n/2}p_{n}(\rho^{+})}{n}z^{n}\\
	&+\frac{1-t^{n}}{1-u^{n}}\frac{(t/q)^{n/2}p_{n}(\rho^{-})}{n}z^{-n}\Biggr). \notag
\end{align}
\end{thm}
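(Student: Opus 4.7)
The plan is to follow, almost verbatim, the argument already carried out for the second series in the proof of Theorem \ref{thm:second_series}. Indeed, the operator $\mcal{O}(\mcal{G}^{\prime}_{r})=\hat{G}^{\prime}_{r}$ differs from $\mcal{O}(\mcal{E}^{\prime}_{r})=t^{-r}\hat{E}^{\prime}_{r}$ only by the rational prefactor inside the integral (here $(z_{i}-q^{-1}z_{j})^{-1}$ instead of $(z_{i}-tz_{j})^{-1}$) and by an overall sign $(-1)^{r}$; the normally ordered product $\no{\xi(z_{1})\cdots \xi(z_{r})}$ and hence every Fock-space manipulation downstream is identical.

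Concretely, I would first specialize (\ref{eq:expectation_free_field}) to $N=1$ to write
\[
\mbb{E}^{\rho^{+},\rho^{-}}_{q,t;u}[\mcal{G}^{\prime}_{r}]=\frac{\mrm{Tr}_{\mcal{F}}\bigl(u^{D}\Gamma(\rho^{+})_{+}\mcal{O}(\mcal{G}^{\prime}_{r})\Gamma(\rho^{-})_{-}\bigr)}{\mrm{Tr}_{\mcal{F}}\bigl(u^{D}\Gamma(\rho^{+})_{+}\Gamma(\rho^{-})_{-}\bigr)},
\]
recalling from the proof of Proposition \ref{prop:Cauchy_identity} that the denominator equals $\tilde{\Pi}_{q,t;u}(\rho^{+};\rho^{-})/(u;u)_{\infty}$. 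Substituting the free-field realization of $\hat{G}^{\prime}_{r}$ from Subsect.\ \ref{subsect:free_field_operators} factors out the constant $(-1)^{r}/r!$, the determinant $\det((z_{i}-q^{-1}z_{j})^{-1})_{1\le i,j\le r}$, and the normally ordered product $\no{\xi(z_{1})\cdots \xi(z_{r})}$ under the formal integral $\int D^{r}\bm{z}$.

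Then I would apply the operator product expansion (Proposition \ref{prop:OPE}) to rearrange $\Gamma(\rho^{+})_{+}\,\xi(\bm{z})_{-}\,\xi(\bm{z})_{+}\,\Gamma(\rho^{-})_{-}$ into the normally ordered form $V(\bm{\gamma})$. Since the vertex operators involved are exactly those of the $\mcal{E}^{\prime}$ computation, the resulting modes $\gamma_{\pm n}$ are identical to those displayed in the proof of Theorem \ref{thm:second_series}, and the OPE prefactor produces $\tilde{\Pi}_{q,t;0}(\rho^{+};\rho^{-})$ together with the two exponentials coupling $\rho^{\pm}$ to the $z_{i}$. Applying the trace formula (Proposition \ref{prop:trace_formula}) and dividing by the denominator, the $\rho^{+}$--$\rho^{-}$ cross terms cancel, the modes linear in $p_{n}(\rho^{\pm})$ pick up the $(1-u^{n})^{-1}$ enhancement that makes up the $\rho^{\pm}$-dependent part of $K^{\rho^{+},\rho^{-}}_{q,t;u;\mcal{G}^{\prime}}(z,w)$, and the modes quadratic in $\bm{z}$ assemble into $\Delta_{q^{-1},t;u}(\bm{z})$ by the same resummation as in the $\mcal{E}^{\prime}$ case.

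No genuine obstacle is expected: the only care needed is the bookkeeping of the sign $(-1)^{r}$ and of the fact that $(z-q^{-1}w)^{-1}$ must be expanded as $\sum_{k\ge 0}z^{-1}(q^{-1}w/z)^{k}$, exactly the convention adopted in Proposition \ref{prop:free_field_Macdonald_operator}, so that the determinantal prefactor is compatible with the formal functional $\int D^{r}\bm{z}$. Combining the three pieces produces the kernel $K^{\rho^{+},\rho^{-}}_{q,t;u;\mcal{G}^{\prime}}(z,w)$ and the measure $\Delta_{q^{-1},t;u}(\bm{z})$ in the claimed form, with no analytic or combinatorial input beyond what was used for Theorems \ref{thm:first_series} and \ref{thm:second_series}.
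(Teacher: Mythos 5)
Your proposal is correct and follows exactly the route the paper takes: the paper's own justification is precisely the observation that $\mcal{O}(\mcal{G}^{\prime}_{r})$ differs from $\mcal{O}(\mcal{E}^{\prime}_{r})$ only in the determinantal prefactor $(z_{i}-q^{-1}z_{j})^{-1}$ and the overall sign $(-1)^{r}$, so the OPE and trace-formula computation from the proof of Theorem~\ref{thm:second_series} carries over verbatim. Your write-up merely makes explicit the steps the paper leaves implicit.
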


\subsection{Analytic interpretation}
As we have already pointed out in Introduction, the above results in Subsect.\,\ref{subsect:first_series}--\ref{subsect:fourth_series} are understood in the formal sense.
Here, we present the analytic interpretation for the formulas for the first and second series of observables as illustrating examples.
The idea is to carefully keep track of the region where an integrand as a formal power series converges to an analytic function.
For simplicity and convenience of description, we concentrate on the case of $N=1$ and assume that the specializations are given by
\begin{equation}
\label{eq:specialization_exam_pure_alpha}
	\rho^{+}(p_{n})=\sum_{s=1}^{\ell_{+}}\left(\alpha^{+}_{s}\right)^{n}, \quad \rho^{-}(p_{n})=\sum_{s=1}^{\ell_{-}}\left(\alpha^{-}_{s}\right)^{n}, \quad n\in \mbb{N},
\end{equation}
where the sequences $(\alpha_{s}^{\pm}\geq 0:s=1,\dots, \ell_{\pm})$ are non-increasing.
More general cases are studied in the analogous way.

\subsubsection{Second series of observables}
It is instructive first to see the analytic interpretation of the formulas in Theorem~\ref{thm:second_series} for the second series of observables.

For convenience, let us employ the change of integral variables $w_{i}:=(t/q)^{1/2}z_{i}^{-1}$, $i=1,\dots, r$.
Under the specialization (\ref{eq:specialization_exam_pure_alpha}), we find the formula
\begin{equation}
\label{eq:second-series_after_change_of_variables}
	\mbb{E}^{\rho^{+},\rho^{-}}_{q,t;u}[\pr{\mcal{E}}_{r}]=\frac{1}{r!}\int D^{r}\bm{w}\det \left[\widetilde{K}^{\rho^{+},\rho^{-}}_{q,t;u;\pr{\mcal{E}}}(w_{i},w_{j})\right]_{1\leq i,j\leq r}\Delta_{q^{-1},t;u}(\bm{w}),
\end{equation}
where
\begin{equation}
	\widetilde{K}^{\rho^{+},\rho^{-}}_{q,t;u;\pr{\mcal{E}}}(z,w)
	=\frac{1}{z-tw}H_{u}((qz)^{-1};\rho^{+})H_{u}(z;\rho^{-}),
\end{equation}
with
\begin{equation}
	H_{u}(z,\rho^{\pm}):=\prod_{s=1}^{\ell_{\pm}}\frac{(t\alpha_{s}^{\pm}z;u)_{\infty}}{(\alpha_{s}^{\pm}z;u)_{\infty}}.
\end{equation}
It is readily seen that the integrand of Eq.~(\ref{eq:second-series_after_change_of_variables}) is convergent in the region
\begin{align}
	&|tw_{i}/w_{j}|<1, \\
	&|(qw_{i})^{-1}\alpha^{+}_{1}|<1, \\
	&|w_{i}\alpha^{-}_{1}|<1, \\
	&|q^{-1}uw_{1}/w_{j}|<1, \quad i,j=1,\dots, r.
\end{align}
Therefore, if $\alpha_{1}^{+}<q$, $\alpha^{-}_{1}<1$ and $u<q$, we can take integral contours on which $|w_{i}|=1$, $i=1,\dots, r$ are satisfied.
In other words, under these conditions, we have a formula involving analytic integrals
\begin{align}
	&\mbb{E}^{\rho^{+},\rho^{-}}_{q,t;u}[\pr{\mcal{E}}_{r}]\\
	&=\frac{1}{(2\pi\sqrt{-1})^{r}r!}\int_{|w_{1}|=1}dw_{1}\cdots \int_{|w_{r}|=1}dw_{r}
	\det \left[\widetilde{K}^{\rho^{+},\rho^{-}}_{q,t;u;\pr{\mcal{E}}}(w_{i},w_{j})\right]_{1\leq i,j\leq r}\Delta_{q^{-1},t;u}(\bm{w}). \notag
\end{align}
Note that this formula recovers \cite[Proposition 3.8]{BorodinCorwinGorinShakirov2016} at the limit $u\to 0$.

\subsubsection{First series of observables}
We next analyze the formula for the first series of observables.
In this case, there appears an additional difficulty: the formal power series $(z_{i}-t^{-1}z_{j})^{-1}$ for all $i,j=1,\dots, r$ do not converge in a common region since we assumed $|t|<1$.
To overcome this difficulty, notice the following identity (see \cite[Section 4]{SK2019}):
\begin{align}
	&\frac{t^{-r}}{r!}\int D^{r}\bm{z}\det \left(\frac{1}{z_{i}-t^{-1}z_{j}}\right)_{1\le i,j\le r}F(z_{1},\dots, z_{r}) \\
	&=\frac{t^{-r(r+1)/2}}{(t^{-1};t^{-1})_{r}}\int D^{r}\bm{z}\prod_{i=1}^{r}z_{i}^{-1}\prod_{1\leq i<j\leq r}\frac{1-z_{i}/z_{j}}{1-t^{-1}z_{i}/z_{j}}F(z_{1},\dots, z_{r}) \notag
\end{align}
for any function $F(z_{1},\dots, z_{r})$ that is symmetric under permutations of $z_{i}$, $i=1,\dots, r$.
Applying the specialization (\ref{eq:specialization_exam_pure_alpha}) and the change of variables $w_{i}:=z_{i}^{-1}$, $i=1,\dots, r$, we have the formula
\begin{align}
\label{eq:first-series_after_det_broken}
	\mbb{E}^{\rho^{+},\rho^{-}}_{q,t;u}[\mcal{E}_{r}]
	=\frac{t^{-r(r-1)/2}}{(t^{-1};t^{-1})_{r}}\int D^{r}\bm{w}&\prod_{i=1}^{r}w_{i}^{-1}\prod_{1\leq i<j\leq r}\frac{1-w_{j}/w_{i}}{1-t^{-1}w_{j}/w_{i}} \\
	\times &\left(\prod_{i=1}^{r}H_{u}((tw_{i})^{-1};\rho^{+})^{-1}H_{u}(w_{i};\rho^{-})^{-1}\right)\Delta_{q,t^{-1};u}(\bm{w}) \notag.
\end{align}
Here, the integrand converges in the region
\begin{align}
	&|\alpha_{1}^{+}w_{i}^{-1}|<1,\quad |\alpha_{1}^{-}tw_{i}|<1, \quad i=1,\dots, r,\\
	&|t^{-1}w_{j}/w_{i}|<1, \quad 1\leq i< j\leq r, \\
	&|t^{-1}uw_{i}/w_{j}|<1, \quad i,j=1,\dots, r.
\end{align}
To make the analytical sense of the formula (\ref{eq:first-series_after_det_broken}), we assume that $\alpha_{1}^{+}<1$, $\alpha_{1}^{-}<t^{r}$, and $u<t^{r}$.
Then, there exists a set of contours $C_{1},\dots, C_{r}$ such that (i) $C_{i}$, $i=1,\dots, r$ encircle $\alpha_{s}^{+}$, $s=1,\dots, \ell_{+}$ and do not encircle $(\alpha_{s}^{-})^{-1}$, $s=1,\dots, \ell_{-}$, (ii) for each $i=1,\dots, r$, the area enclosed by $C_{i}$ contains $t^{-1}C_{j}$ with $j<i$, (iii) if $(w_{1},w_{r})\in C_{1}\times C_{r}$, then $|t^{-1}uw_{1}/w_{r}|<1$.
In this case, the formula (\ref{eq:first-series_after_det_broken}) admits an analytical expression that
\begin{align}
\label{eq:first-series_analytic}
	\mbb{E}^{\rho^{+},\rho^{-}}_{q,t;u}[\mcal{E}_{r}]
	=&\frac{t^{-r(r-1)/2}}{(t^{-1};t^{-1})_{r}}\frac{1}{(2\pi\sqrt{-1})^{r}}\int_{C_{1}}\frac{dw_{1}}{w_{1}}\cdots \int_{C_{r}}\frac{dw_{r}}{w_{r}}\prod_{1\leq i<j\leq r}\frac{1-w_{j}/w_{i}}{1-t^{-1}w_{j}/w_{i}} \\
	&\times \left(\prod_{i=1}^{r}H_{u}((tw_{i})^{-1};\rho^{+})^{-1}H_{u}(w_{i};\rho^{-})^{-1}\right)\Delta_{q,t^{-1};u}(\bm{w}) \notag.
\end{align}

Let us see the Hall--Littlewood limit $q\to 0$ of Eq.~(\ref{eq:first-series_analytic}).
Since we have
\begin{equation}
	\lim_{q\to 0}\mcal{E}_{r}(\lambda)=\frac{t^{-t(t-1)/2}}{(t^{-1};t^{-1})_{r}}t^{-r\pr{\lambda}_{1}}, \quad \lambda\in\mbb{Y}, \quad r\in\mbb{N},
\end{equation}
the formula (\ref{eq:first-series_analytic}) gives
\begin{align}
\label{eq:first-series_analytic_Hall--Littlewood}
	\mbb{E}^{\rho^{+},\rho^{-}}_{0,t;u}[t^{-r\pr{\lambda}_{1}}]
	=&\frac{1}{(2\pi\sqrt{-1})^{r}}\int_{C_{1}}\frac{dw_{1}}{w_{1}}\cdots \int_{C_{r}}\frac{dw_{r}}{w_{r}}\prod_{1\leq i<j\leq r}\frac{1-w_{j}/w_{i}}{1-t^{-1}w_{j}/w_{i}} \\
	&\times \left(\prod_{i=1}^{r}H_{u}((tw_{i})^{-1};\rho^{+})^{-1}H_{u}(w_{i};\rho^{-})^{-1}\right)\Delta_{0,t^{-1};u}(\bm{w}) \notag.
\end{align}
This formula (\ref{eq:first-series_analytic_Hall--Littlewood}) reduces to \cite[Proposition 3.1]{Dimitrov2018} at the limit $u\to 0$.

\section{Shift-mixed measures and the Schur-limit}
\label{app:shift-mixed}
In this section, we prove Theorem \ref{thm:expectation_shift_mixed} concerning an expectation value under a shift-mixed periodic Macdonald measure and its Schur-limit, Proposition \ref{prop:shift-mixed_Schur_limit}.

\subsection{Fock space description}
It is immediate that a shift-mixed periodic Macdonald measure admits an interpretation in terms of Fock spaces.
We set $\Xi=\mcal{F}\otimes \mbb{C}[e^{\pm \alpha}]$ and $\Xi^{\dagger}=\mcal{F}^{\dagger}\otimes \mbb{C}[e^{\pm \alpha}]$.
We often write $\ket{v\otimes e^{n\alpha}}:=\ket{v}\otimes e^{n\alpha}\in \Xi$, $\ket{v}\in\mcal{F}$, $n\in\mbb{Z}$ and
$\bra{v\otimes e^{n\alpha}}=\bra{v}\otimes e^{n\alpha}\in \Xi^{\dagger}$, $\bra{v}\in \mcal{F}^{\dagger}$, $n\in\mbb{Z}$.
The paring $\Xi^{\dagger}\times \Xi\to\mbb{C}$ is naturally defined by $\braket{v\otimes e^{m\alpha}|w\otimes e^{n\alpha}}=\braket{v|w}\delta_{m,n}$, $\bra{v}\in\mcal{F}^{\dagger}$, $\ket{w}\in\mcal{F}$, $m,n\in\mbb{Z}$.
We define the charge operator $a_{0}\in\mrm{End}(\Xi)$ by $a_{0}\ket{v\otimes e^{n\alpha}}=n\ket{v\otimes e^{n\alpha}}$, $\ket{v}\in \mcal{F}$, $n\in\mbb{Z}$ and the energy operator $H=D+a_{0}^{2}/2$.
Then, the weight of a shift-mixed periodic Macdonald measure is expressed as
\begin{equation}
	\mbb{P}^{\rho^{+},\rho^{-}}_{q,t;u,\zeta}(\lambda,n)=\frac{\mrm{Tr}_{\Xi}\left(u^{H}\zeta^{a_{0}}\Gamma (\rho^{+})_{+}\ket{P_{\lambda}\otimes e^{n\alpha}}\bra{Q_{\lambda}\otimes e^{n\alpha}}\Gamma (\rho^{-})_{-}\right)}{\mrm{Tr}_{\Xi}\left(u^{H}\zeta^{a_{0}}\Gamma (\rho^{+})_{+}\Gamma (\rho^{-})_{-}\right)}.
\end{equation}

\subsection{Observables}
We extend the symbol $\mcal{O}(f)$ for a function $f:\mbb{Y}\to \mbb{R}$ to a function on $\mbb{Y}\times\mbb{Z}$ by
\begin{equation}
	\mcal{O}(f)=\sum_{\lambda\in\mbb{Y},n\in\mbb{Z}}f(\lambda,n)\ket{P_{\lambda}\otimes e^{n\alpha}}\bra{Q_{\lambda}\otimes e^{n\alpha}}\in\mrm{End}(\Xi),\ \ f:\mbb{Y}\times\mbb{Z}\to\mbb{R}.
\end{equation}
Obviously, the operator corresponding to $\tilde{\mcal{E}}_{r}$ is 
\begin{equation}
	\mcal{O}(\tilde{\mcal{E}}_{r})=\tilde{E}_{r}:=\frac{1}{r!}\int D^{r}\bm{z}\det\left(\frac{1}{z_{i}-t^{-1}z_{j}}\right)_{1\le i,j\le r}t^{-ra_{0}}\no{\eta (z_{1})\cdots \eta (z_{r})},
\end{equation}
which we call the $r$-th extended Macdonald operator.
Therefore, the expectation value is computed as
\begin{equation}
	\mbb{E}^{\rho^{+},\rho^{-}}_{q,t;u,\zeta}[\tilde{\mcal{E}}_{r}]=\sum_{\lambda\in\mbb{Y},n\in\mbb{Z}}\tilde{\mcal{E}}_{r}(\lambda,n)\mbb{P}^{\rho^{+},\rho^{-}}_{q,t;u,\zeta}(\lambda,n)=\frac{\mrm{Tr}_{\Xi}\left(u^{H}\zeta^{a_{0}}\Gamma (\rho^{+})_{+}\tilde{E}_{r}\Gamma (\rho^{-})_{-}\right)}{\mrm{Tr}_{\Xi}\left(u^{H}\zeta^{a_{0}}\Gamma (\rho^{+})_{+}\Gamma (\rho^{-})_{-}\right)}.
\end{equation}

\begin{proof}[Proof of Theorem \ref{thm:expectation_shift_mixed}]
It follows straightforwardly that
\begin{equation}
	\mrm{Tr}_{\Xi}\left(u^{H}\zeta^{a_{0}}\Gamma (\rho^{+})_{+}\tilde{E}_{r}\Gamma (\rho^{-})_{-}\right)
	=\theta_{3}(\zeta t^{-r};u)\mrm{Tr}_{\mcal{F}}\left(u^{D}\Gamma (\rho^{+})_{+}\what{E}_{r}\Gamma (\rho^{-})_{-}\right),
\end{equation}
while the trace over the Fock space $\mcal{F}$ has been computed in the proof of Theorem \ref{thm:first_series}
\end{proof}

\subsection{Schur-limit}
We fix $t$ and consider the Schur-limit $q\to t$.
Let us introduce the charged free fermions:
\begin{align}
\label{eq:charged_free_fermion_field_1}
	\psi (z)&=e^{-\alpha}z^{-a_{0}}\exp\left(\sum_{n>0}\frac{a_{-n}}{n}z^{n}\right)\exp\left(-\sum_{n>0}\frac{a_{n}}{n}z^{-n}\right), \\
\label{eq:charged_free_fermion_field_2}	
	\psi^{\ast}(z)&= e^{\alpha}z^{a_{0}}\exp\left(-\sum_{n>0}\frac{a_{-n}}{n}z^{n}\right)\exp\left(\sum_{n>0}\frac{a_{n}}{n}z^{-n}\right).
\end{align}
According to the boson-fermion correspondence (see e.g. \cite[Lecture 5]{KacRainaRozhkovskaya2013}), the space $\Xi$ is isomorphic to the fermion Fock space with respect to the fermion fields (\ref{eq:charged_free_fermion_field_1}), (\ref{eq:charged_free_fermion_field_2}) at the Schur-limit $q\to t$.
For $r\in\mbb{N}$, set
\begin{align}
	\scr{E}_{r}(t)&=\frac{1}{r!}\int D^{r}\bm{z}\scr{E}_{r}(\bm{z},t), &
	\scr{E}_{r}(\bm{z},t)&=\psi (z_{1})\cdots \psi (z_{r})\psi^{\ast}(t^{-1}z_{r})\cdots \psi^{\ast}(t^{-1}z_{1}).
\end{align}

\begin{prop}
\label{prop:Macdonald_operator_Schur_limit}
For each $r\in\mbb{N}$, the extended Macdonald operator $\tilde{E}_{r}$ reduces, at the Schur-limit $q\to t$, to $\scr{E}_{r}(t)$.
\end{prop}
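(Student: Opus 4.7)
At the Schur-limit $q\to t$ the Heisenberg commutator degenerates to the canonical $[a_m,a_n]=m\delta_{m+n,0}$, so (\ref{eq:charged_free_fermion_field_1})--(\ref{eq:charged_free_fermion_field_2}) become the standard charged free fermions on $\Xi$. The plan is to bosonize $\scr{E}_r(\bm{z},t)$ directly, via OPE, and to recognize the result as the integrand in the definition of $\tilde{E}_r$.

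Let $V(z)$ and $V^*(w)$ denote the Heisenberg-exponential parts of $\psi(z)$ and $\psi^*(w)$. Using $[a_0,e^{\pm\alpha}]=\pm e^{\pm\alpha}$ together with the OPE $V(z)V(w)=(1-w/z)\no{V(z)V(w)}$, an induction on $r$ yields
\begin{equation*}
\psi(z_1)\cdots\psi(z_r)=\prod_{1\le i<j\le r}(z_i-z_j)\,(e^{-\alpha})^{r}(z_1\cdots z_r)^{-a_0}\no{V(z_1)\cdots V(z_r)},
\end{equation*}
and an analogous formula for $\psi^*(w_r)\cdots\psi^*(w_1)$, which additionally carries a sign $(-1)^{r(r-1)/2}$ coming from the reversal of the $\psi^*$'s. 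I would then multiply the two blocks and normal-order the charge operators using the identity $(z_1\cdots z_r)^{-a_0}(e^{\alpha})^r=(z_1\cdots z_r)^{-r}(e^{\alpha})^r(z_1\cdots z_r)^{-a_0}$ and the cancellation $(e^{-\alpha})^r(e^{\alpha})^r=1$; the mixed OPE $V(z)V^*(w)=(1-w/z)^{-1}\no{V(z)V^*(w)}$ then merges the two normal-ordered blocks, and the factor $(z_1\cdots z_r)^{r}$ arising from it absorbs the $(z_1\cdots z_r)^{-r}$ produced earlier. What survives is the residual charge shift $(w_1\cdots w_r/z_1\cdots z_r)^{a_0}$, the combined normal-ordered product, and the combinatorial factor
\begin{equation*}
(-1)^{r(r-1)/2}\prod_{i<j}(z_i-z_j)(w_i-w_j)\prod_{i,j}(z_i-w_j)^{-1},
\end{equation*}
which the Cauchy determinant identity recognizes as $\det\left(1/(z_i-w_j)\right)_{1\le i,j\le r}$.

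Specializing $w_i=t^{-1}z_i$ turns the charge shift into $t^{-ra_0}$, while a short power-sum computation (identical in spirit to the identification of $\eta(z)$ in Subsect.\ \ref{subsect:free_field_operators}) gives $\no{V(z)V^*(t^{-1}z)}=\eta(z)|_{q=t}$; permutation-symmetry of normal-ordered products then upgrades this to $\no{V(z_1)\cdots V(z_r)V^*(t^{-1}z_1)\cdots V^*(t^{-1}z_r)}=\no{\eta(z_1)\cdots\eta(z_r)}|_{q=t}$. Since $t^{-ra_0}$ commutes with the $\eta$-factors, the resulting integrand coincides with the one in the definition of $\tilde{E}_r$, and the claim follows by integrating against $\frac{1}{r!}\int D^{r}\bm{z}$. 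The main bookkeeping obstacle is lining up the three independent sources of signs and shift factors — fermion antisymmetry, the BCH commutations $z^{\mp a_0}e^{\pm\alpha}=z^{\pm 1}e^{\pm\alpha}z^{\mp a_0}$, and the Cauchy determinant — so that the determinantal kernel and the operator $t^{-ra_0}$ emerge on the nose, without leftover powers of the $z_i$ or stray minus signs.
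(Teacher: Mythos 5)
Your proposal is correct and follows essentially the same route as the paper: the paper's proof is a one-line appeal to the fermionic Wick formula / Cauchy determinant identity to write $\scr{E}_{r}(\bm{z},t)=\det\bigl(1/(z_{i}-t^{-1}z_{j})\bigr)t^{-ra_{0}}\no{\eta(z_{1})\cdots\eta(z_{r})}$, and your OPE/charge-commutation bookkeeping (Vandermonde factors, cancellation of the $(z_1\cdots z_r)^{\pm r}$ powers, the residual $t^{-ra_0}$, and $\no{V(z)V^*(t^{-1}z)}=\eta(z)|_{q=t}$) is exactly the computation underlying that identity, which the paper itself carries out in the proof of Proposition \ref{prop:finite_temperature_determinant}. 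All the signs and shift factors you flag do check out.
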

\begin{proof}
Owing to the fermionic Wick formula or the Cauchy determinant formula, we see that
\begin{equation}
	\scr{E}_{r}(\bm{z},t)=\det\left(\frac{1}{z_{i}-t^{-1}z_{j}}\right)_{1\le i,j\le r}t^{-ra_{0}}\no{\eta (z_{1})\cdots \eta (z_{r})},
\end{equation}
which implies the desired result.
\end{proof}

Note that the operator $\scr{E}_{1}(t)$ is essentially the same as one introduced in \cite{OkounkovPandharipande2006}.
Due to Proposition \ref{prop:Macdonald_operator_Schur_limit}, we can say that the Macdonald operators are deformation of the operators $\scr{E}_{r}(t)$, $r\in\mbb{N}$, each of which is realized by means of the charged free fermion.
More precisely, we apply deformation of the Heisenberg algebra only after rearranging the operator $\scr{E}_{r}(t)$ in the normally ordered manner with respect to the bosonic modes to obtain the corresponding Macdonald operator. Note that a similar observation has been given in \cite{Prochazka2019} for the Nazarov--Sklyanin operator in the Jack case \cite{NazarovSklyanin2013a,NazarovSklyanin2013b}.
Following this recognition about the Macdonald operators, we can understand the determinant found in the free field realization of the Macdonald operators, which is also the origin of the determinantal structure of Macdonald processes, as remnant of the free fermions before the deformation.

As was noticed in \cite[Remark 4.1]{BeteaBouttier2019}, we can see that the boson-fermion correspondence implies the following identity.
\begin{prop}
\label{prop:finite_temperature_determinant}
Let $x_{1},\dots, x_{r}$, $y_{1},\dots, y_{r}$ be indeterminates and let $u,\zeta\in (0,1)$ be parameters. We have
\begin{align}
	& \frac{\prod_{i<j}(x_{i}-x_{j})\prod_{i>j}(y_{i}-y_{j})}{\prod_{i,j=1}^{r}(x_{i}-x_{j})}\frac{\theta_{3}\left(\zeta\frac{y_{1}\cdots y_{r}}{x_{1}\cdots x_{r}};u\right)}{\theta_{3}(\zeta;u)}\prod_{i,j=1}^{r}\frac{(ux_{i}/x_{j};u)_{\infty}(uy_{i}/y_{j};u)_{\infty}}{(ux_{i}/y_{j};u)_{\infty}(uy_{i}/x_{j};u)_{\infty}} \\
	&=\det\left(\frac{1}{x_{i}-y_{j}}\frac{\theta_{3}(\zeta y_{j}/x_{i};u)}{\theta_{3}(\zeta;u)}\frac{(u;u)_{\infty}^{2}}{(ux_{i}/y_{j};u)_{\infty}(uy_{j}/x_{i};u)_{\infty}}\right)_{1\le i,j\le r}. \notag
\end{align}
\end{prop}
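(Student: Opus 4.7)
The plan is to exhibit both sides of the claimed identity as two different evaluations of the single fermionic trace
\begin{equation*}
	T_{r}(\bm{x},\bm{y}):=\frac{\mrm{Tr}_{\Xi}\!\left(u^{H}\zeta^{a_{0}}\,\psi(x_{1})\cdots\psi(x_{r})\,\psi^{*}(y_{r})\cdots\psi^{*}(y_{1})\right)}{\mrm{Tr}_{\Xi}\!\left(u^{H}\zeta^{a_{0}}\right)},
\end{equation*}
where the normalization is the Jacobi triple product $\mrm{Tr}_{\Xi}(u^{H}\zeta^{a_{0}})=\theta_{3}(\zeta;u)/(u;u)_{\infty}$ under the boson--fermion correspondence at $q=t$. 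The two sides of the proposition will be the bosonic and the fermionic evaluations of this trace.

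Bosonic evaluation (LHS). I would substitute the vertex operator expressions (\ref{eq:charged_free_fermion_field_1})--(\ref{eq:charged_free_fermion_field_2}) for each $\psi$ and $\psi^{*}$ and bring the string into bosonic normal order using Proposition \ref{prop:OPE} at $q=t$, together with the relation $x^{\mp a_{0}}e^{\pm\alpha}=e^{\pm\alpha}x^{\mp a_{0}\pm 1}$. The computation produces
\begin{equation*}
	\prod_{i=1}^{r}\psi(x_{i})\prod_{j=r}^{1}\psi^{*}(y_{j})=\frac{\prod_{i<j}(x_{i}-x_{j})\prod_{i>j}(y_{i}-y_{j})}{\prod_{i,j}(x_{i}-y_{j})}\left(\frac{y_{1}\cdots y_{r}}{x_{1}\cdots x_{r}}\right)^{a_{0}}W(\bm{x},\bm{y}),
\end{equation*}
where $W(\bm{x},\bm{y})$ is a single normal-ordered Heisenberg vertex operator. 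The spurious monomials in $x_{i}$ and $y_{j}$ thrown off by the zero-mode commutations $x^{-a_{0}}e^{-\alpha}$ and $y^{a_{0}}e^{\alpha}$ must be shown to cancel against the corresponding monomials produced by the $E_{\pm}$-OPE factors $1-x_{j}/x_{i}$, $(1-y_{j}/x_{i})^{-1}$, $1-y_{k}/y_{j}$. After that cancellation, the trace over $\Xi=\mcal{F}\otimes\mbb{C}[e^{\pm\alpha}]$ factorizes: the zero-mode piece yields the theta function $\theta_{3}(\zeta\,y_{1}\cdots y_{r}/x_{1}\cdots x_{r};u)$, and the Heisenberg trace of $W(\bm{x},\bm{y})$ is evaluated via Proposition \ref{prop:trace_formula} at $q=t$, giving the double Pochhammer product $\prod_{i,j}(ux_{i}/x_{j};u)_{\infty}(uy_{i}/y_{j};u)_{\infty}/\bigl((ux_{i}/y_{j};u)_{\infty}(uy_{i}/x_{j};u)_{\infty}\bigr)$ up to $1/(u;u)_{\infty}$. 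Dividing by the normalization $\theta_{3}(\zeta;u)/(u;u)_{\infty}$ reproduces exactly the LHS of Proposition \ref{prop:finite_temperature_determinant}.

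Fermionic evaluation (RHS). The Gibbs weight $u^{H}\zeta^{a_{0}}$ is Gaussian (quasi-free) in the fermionic picture, so the multi-point trace satisfies the fermionic Wick theorem,
\begin{equation*}
	T_{r}(\bm{x},\bm{y})=\det\bigl(T_{1}(x_{i},y_{j})\bigr)_{1\le i,j\le r},
\end{equation*}
where the choice of reversed ordering $\psi^{*}(y_{r})\cdots\psi^{*}(y_{1})$ precisely matches the standard row--column expansion of the determinant (this fixes the overall sign). The two-point function $T_{1}(x,y)$ is the $r=1$ instance of the bosonic evaluation carried out above, and a direct computation gives
\begin{equation*}
	T_{1}(x,y)=\frac{1}{x-y}\frac{\theta_{3}(\zeta y/x;u)}{\theta_{3}(\zeta;u)}\frac{(u;u)_{\infty}^{2}}{(ux/y;u)_{\infty}(uy/x;u)_{\infty}},
\end{equation*}
which is exactly the kernel of the determinant on the RHS of Proposition \ref{prop:finite_temperature_determinant}. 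Equating the two evaluations finishes the proof.

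Main obstacle. The delicate point is the bookkeeping in the bosonic reduction: monomial factors in $x_{i}$ and $y_{j}$ coming from pushing the $e^{\pm\alpha}$'s past the $z^{\mp a_{0}}$'s must cancel cleanly against the monomial parts of the $E_{\pm}$-OPE factors, so that one is left with only Vandermonde-type ratios. A companion subtlety is the precise form of Wick's theorem for charged fermions in the thermal state $u^{H}\zeta^{a_{0}}$; this is standard, but one must match signs coming from the fermionic anti-commutations against the signs implicit in the determinant expansion, which is why the ordering $\psi^{*}(y_{r})\cdots\psi^{*}(y_{1})$ is chosen as above.
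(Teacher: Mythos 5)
Your proposal is correct and follows essentially the same route as the paper: the paper likewise evaluates the thermal correlation function $\braket{\psi(x_{1})\cdots\psi(x_{r})\psi^{*}(y_{r})\cdots\psi^{*}(y_{1})}_{u,\zeta}$ bosonically (normal ordering plus the trace formula of Proposition~\ref{prop:trace_formula}, with the charge sum producing the theta function) to get the left-hand side, and fermionically via the Wick formula to get the determinant. The two evaluations are then compared exactly as you describe.
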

\begin{proof}
We compute the correlation function
\begin{equation}
	\braket{\psi (x_{1})\cdots \psi (x_{r}) \psi^{\ast}(y_{r})\cdots \psi^{\ast} (y_{1})}_{u,\zeta}
	:=\frac{\mrm{Tr}_{\Xi}\left(u^{H}\zeta^{a_{0}}\psi (x_{1})\cdots \psi (x_{r}) \psi^{\ast}(y_{r})\cdots \psi^{\ast}(y_{1})\right)}{\mrm{Tr}_{\Xi}\left(u^{H}\zeta^{a_{0}}\right)}
\end{equation}
in two different ways; in the bosonic and fermionic methods.
On the bosonic side, we have
\begin{equation}
	\psi (x_{1})\cdots \psi (x_{r}) \psi^{\ast} (y_{r})\cdots \psi^{\ast}(y_{1})
	=\frac{\prod_{i<j}(x_{i}-x_{j})\prod_{i>j}(y_{i}-y_{j})}{\prod_{i,j=1}^{r}(x_{i}-y_{j})}\left(\frac{y_{1}\cdots y_{r}}{x_{1}\cdots x_{r}}\right)^{a_{0}}V(\bm{\gamma}),
\end{equation}
where
\begin{align}
	\gamma_{n}&=-\frac{n}{|n|}\sum_{i=1}^{r}(x_{i}^{-n}-y_{i}^{-n}),\ \ n\in\mbb{Z}\backslash \set{0}.
\end{align}
The trace over $\Xi$ is evaluated by decomposing it into the sum over charges and the trace over $\mcal{F}$ that can be computed by applying Proposition \ref{prop:trace_formula}. Consequently, we have
\begin{align}
\label{eq:multi_point_boson}
	&\braket{\psi (x_{1})\cdots \psi (x_{r}) \psi^{\ast}(y_{r})\cdots \psi^{\ast} (y_{1})}_{u,\zeta} \\
	&=\frac{\prod_{i<j}(x_{i}-x_{j})\prod_{i>j}(y_{i}-y_{j})}{\prod_{i,j=1}^{r}(x_{i}-x_{j})}\frac{\theta_{3}\left(\zeta\frac{y_{1}\cdots y_{r}}{x_{1}\cdots x_{r}};u\right)}{\theta_{3}(\zeta;u)}\prod_{i,j=1}^{r}\frac{(ux_{i}/x_{j};u)_{\infty}(uy_{i}/y_{j};u)_{\infty}}{(ux_{i}/y_{j};u)_{\infty}(uy_{i}/x_{j};u)_{\infty}}. \notag
\end{align}

On the other hand, the fermionic Wick formula (see \cite[Appendix B]{BeteaBouttier2019}) allows us to have
\begin{equation}
\label{eq:multi_point_fermion}
	\braket{\psi (x_{1})\cdots \psi (x_{r}) \psi^{\ast}(y_{r})\cdots \psi^{\ast} (y_{1})}_{u,\zeta}
	=\det\left(\braket{\psi (x_{i}) \psi^{\ast} (y_{j})}_{u,\zeta}\right)_{1\le i,j\le r},
\end{equation}
where the two point function is obviously
\begin{equation}
	\braket{\psi (x) \psi^{\ast} (y)}_{u,\zeta}=\frac{1}{x-y}\frac{\theta_{3}(\zeta y/x;u)}{\theta_{3}(\zeta;u)}\frac{(u;u)_{\infty}^{2}}{(ux/y;u)_{\infty}(uy/x;u)_{\infty}}
\end{equation}
as a special case of (\ref{eq:multi_point_boson}) at $r=1$.
Comparing (\ref{eq:multi_point_boson}) and (\ref{eq:multi_point_fermion}), we can see the desired identity.
\end{proof}

As an application of Proposition \ref{prop:finite_temperature_determinant}, we can check that the formula in Theorem \ref{thm:expectation_shift_mixed} reduces to an integral of a single determinant at the Schur-limit $q\to t$ proving Proposition \ref{prop:shift-mixed_Schur_limit}, which is of course expected since the observable $\tilde{E}_{r}$ reduces to $\scr{E}_{r}$ written by means of fermions.

\appendix

\section{Generalized MacMahon formula for cylindric partitions}
\label{app:MacMahon_cylindric_partitions}
In this appendix, we describe an application of periodic Macdonald processes to combinatorics; a generalization of the MacMahon formula for cylindric partitions. 
The original MacMahon formula gives a generating function that enumerates plane partitions. As its $(q,t)$-deformation, a generalization of the MacMahon formula was proposed in \cite{Vuletic2009}.
On the other hand, the notion of plane partitions can be generalized to that of cylindric partitions by imposing a certain periodicity.
The MacMahon formula for cylindric partitions was given in \cite{Borodin2007} and its Hall--Littlewood extension was in \cite{CorteelSaveliefVuletic2011}.
Here, we present a further generalization; the Macdonald-level extension of the MacMahon formula for cylindric partitions.

In the first version of the manuscript of this paper that appeared on arXiv, the content of this appendix was in the main text. Later, we found that the same result had been obtained in \cite{Langer2012}. We believe, however, that including this appendix helps readers who are concerned with combinatorics.

\subsection{Cylindric partitions}
A cylindric partition is defined as a sequence of partitions subject to certain inclusion relations and a periodicity (Figure \ref{fig:cylindric_partition}).
Let us fix $N\in\mbb{N}$ and write $[1,N]:=\set{1,\dots, N}$.
A cylindric partition of periodicity $N$ and boundary profile $\mbb{M}\subset [1,N]$ is a sequence $\bm{\lambda}=(\lambda^{1},\dots,\lambda^{N})$ of partitions such that
\begin{equation}
	\begin{cases}
		\lambda^{k}\prec \lambda^{k+1}, & k\in \mbb{M},\\
		\lambda^{k}\succ \lambda^{k+1}, & k\not\in \mbb{M}
	\end{cases}
\end{equation}
for all $k=1,\dots, N$,
where we identify $\lambda^{N+1}=\lambda^{1}$.
Here, we write $\lambda\succ\mu$ for two partitions $\lambda,\mu$ if $\mu\subset\lambda$ and the skew-partition $\lambda/\mu$ is a horizontal strip, i.e., it has at most a single box in each column.
We write $\mcal{CP}(N,\mbb{M})$ for the collection of cylindric partitions of periodicity $N$ and boundary profile $\mbb{M}$.
For a cylindric partition $\bm{\lambda}\in\mcal{CP}(N,\mbb{M})$, we call the number $|\bm{\lambda}|=\sum_{k=1}^{N}|\lambda^{k}|$ its weight.

\begin{figure}[h]
\begin{center}
\includegraphics[width=.8\linewidth]{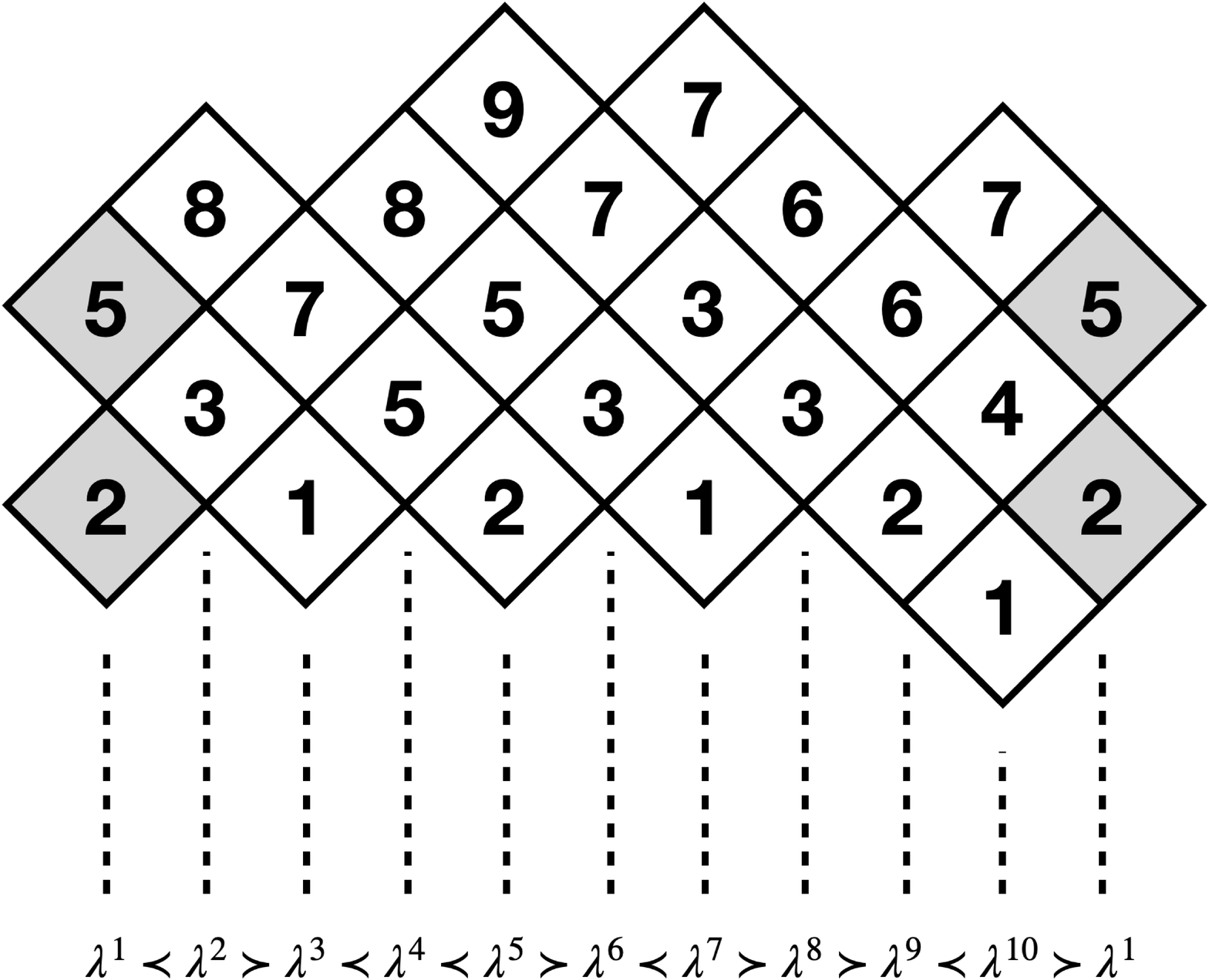}
\caption{A cylindric partition in the case that $N=10$ and $\mbb{M}=\{1,3,4,6,9\}$.}
\label{fig:cylindric_partition}
\end{center}
\end{figure}

We define a weight of a cylindric partition $\bm{\lambda}\in\mcal{CP}(N,\mbb{M})$ as follows.
To a pair of $k\in [1,N]$ and $j\in\mbb{N}$, we associate partitions
\begin{align}
\label{eq:partition_associated_to_a_box_1}
	\lambda &=(\lambda^{k}_{j},\lambda^{k}_{j+1},\dots), \\
\label{eq:partition_associated_to_a_box_2}
	\mu &= (\lambda^{k+1}_{j+\chi [k\in \mbb{M}]},\lambda^{k+1}_{j+\chi [k\in \mbb{M}]+1},\dots), \\
\label{eq:partition_associated_to_a_box_3}
	\nu &= (\lambda^{k-1}_{j+\chi [k-1\not\in \mbb{M}]},\lambda^{k-1}_{j+\chi [k-1\not\in \mbb{M}]+1},\dots ),
\end{align}
and define a weight of the box specified by $(k,j)$ as
\begin{equation}
\label{eq:weight_cylindric_partition_building_block}
	F_{\bm{\lambda}}(k,j;q,t):=\prod_{m=0}^{\infty}\frac{f(q^{\lambda_{1}-\lambda_{m+1}}t^{m})f(q^{\lambda_{1}-\lambda_{m+2}}t^{m})}{f(q^{\lambda_{1}-\mu_{m+1}}t^{m})f(q^{\lambda_{1}-\nu_{m+1}}t^{m})}, \quad f(u)=\frac{(tu;q)_{\infty}}{(qu;q)_{\infty}}.
\end{equation}
Then, the weight of $\bm{\lambda}$ is defined as the product of (\ref{eq:weight_cylindric_partition_building_block}) over all boxes:
\begin{equation}
\label{eq:weight_cylindric_partition}
	F_{\bm{\lambda}}:=\prod_{k=1}^{N}\prod_{j\in\mbb{N}}F_{\bm{\lambda}}(k,j).
\end{equation}

The generalized MacMahon formula \cite{Langer2012} for cylindric partitions reads as follows.
\begin{thm}
\label{thm:MacMahon_cylindric_partitions}
Let $N\in\mbb{N}$ and let $\mbb{M}\subset [1,N]$ be a boundary profile. Then
\begin{equation}
	\sum_{\bm{\lambda}\in\mcal{CP}(N,\mbb{M})}F_{\bm{\lambda}}(q,t)s^{|\bm{\lambda}|}=\frac{1}{(s^{N};s^{N})_{\infty}}\prod_{k\in\mbb{M},l\not\in\mbb{M}}\frac{(ts^{[l-k]};q,s^{N})_{\infty}}{(s^{[l-k]};q,s^{N})_{\infty}},
\end{equation}
where we set
\begin{equation}
	[l-k]=
	\begin{cases}
	l-k, & l>k, \\
	l-k+N, & l<k.
	\end{cases}
\end{equation}
\end{thm}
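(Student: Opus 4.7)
The plan is to identify the generating function on the left-hand side with a partition function of a periodic Macdonald process, for a carefully chosen set of specializations, and then invoke Proposition~\ref{prop:Cauchy_identity} to evaluate the result.

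The first, and most substantive, step is to establish the combinatorial factorization
\[F_{\bm\lambda}(q,t)=\prod_{k\in\mbb{M}}\psi_{\lambda^{k+1}/\lambda^k}(q,t)\cdot\prod_{k\not\in\mbb{M}}\varphi_{\lambda^k/\lambda^{k+1}}(q,t),\]
which expresses the product-over-boxes weight $F_{\bm\lambda}$ as one $\psi$-factor for each ascent and one $\varphi$-factor for each descent of the cylindric partition. This is the Macdonald analogue of Vuleti\'{c}'s factorization in the plane-partition setting, and I would verify it by matching the infinite product (\ref{eq:weight_cylindric_partition_building_block}) column by column against the explicit product formulas for $\psi$ and $\varphi$ recalled in Section~\ref{sect:preliminaries}. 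I expect this combinatorial identity to be the main technical obstacle.

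Granting this factorization, I would use the cyclic symmetry of both sides of the theorem under $\mbb{M}\mapsto\mbb{M}+1\pmod N$ to reduce to the case $N\not\in\mbb{M}$, and recall the one-variable specialization identities $P_{\lambda/\mu}(x)=\psi_{\lambda/\mu}(q,t)x^{|\lambda|-|\mu|}$ and $Q_{\lambda/\mu}(x)=\varphi_{\lambda/\mu}(q,t)x^{|\lambda|-|\mu|}$, each vanishing unless $\mu\prec\lambda$. I would then take
\[\rho^+_k=\{s^{-k}\}\ \text{for}\ k\in\mbb{M},\qquad \rho^-_l=\{s^l\}\ \text{for}\ l\not\in\mbb{M},\qquad u=s^N,\]
with the remaining specializations declared empty. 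Since $P_{\lambda/\mu}(\emptyset)=Q_{\lambda/\mu}(\emptyset)=\delta_{\lambda,\mu}$, the empty specializations force $\mu^k=\lambda^k$ for $k\in\mbb{M}$ and $\mu^k=\lambda^{k+1}$ for $k\not\in\mbb{M}$ inside $W^{\bm\rho^+,\bm\rho^-}_{q,t;u}(\bm\lambda,\bm\mu)$, and the surviving single-variable factors then produce exactly the $\psi/\varphi$ weights times an $s$-power. A short telescoping computation, using $|\mu^N|=|\lambda^1|$ (which is where $N\not\in\mbb{M}$ enters), should collapse this $s$-power to $|\bm\lambda|$, yielding
\[\sum_{\bm\lambda\in\mcal{CP}(N,\mbb{M})}F_{\bm\lambda}(q,t)s^{|\bm\lambda|}=\Pi_{q,t;s^N}(\bm\rho^+;\bm\rho^-).\]

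The last step is to feed these specializations into Proposition~\ref{prop:Cauchy_identity}. The only nontrivial $\tilde\Pi$-factors arise from pairs $(k,l)\in\mbb{M}\times\mbb{M}^c$, each equal to $(ts^{l-k};q,s^N)_\infty/(s^{l-k};q,s^N)_\infty$. For $l>k$ this is already the desired factor $(ts^{[l-k]};q,s^N)_\infty/(s^{[l-k]};q,s^N)_\infty$, while for $l<k$ the correction factors built into Proposition~\ref{prop:Cauchy_identity}, combined with the shift identity $(s^a;q,s^N)_\infty=(s^a;q)_\infty(s^{a+N};q,s^N)_\infty$, should convert $s^{l-k}$ into $s^{l-k+N}=s^{[l-k]}$. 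Assembling these contributions with the prefactor $(s^N;s^N)_\infty^{-1}$ arising from $(u;u)_\infty^{-1}$ gives the theorem's right-hand side.
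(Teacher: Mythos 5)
Your proposal is correct and follows essentially the same route as the paper: one reduces the left-hand side to the partition function $\Pi_{q,t;s^N}(\bm{\rho}^{+};\bm{\rho}^{-})$ of a periodic Macdonald process with single-variable specializations $s^{-k}$ for $k\in\mbb{M}$ and $s^{l}$ for $l\notin\mbb{M}$ at $u=s^{N}$, with the identity $F_{\bm{\lambda}}=\prod_{k\in\mbb{M}}\psi_{\lambda^{k+1}/\lambda^{k}}\prod_{k\notin\mbb{M}}\varphi_{\lambda^{k}/\lambda^{k+1}}$ (which the paper likewise isolates as the key combinatorial lemma, handled by adapting Vuleti\'{c}'s argument) and the telescoping of the $s$-powers, followed by Proposition~\ref{prop:Cauchy_identity}. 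The only cosmetic difference is that you normalize via a cyclic shift to assume $N\notin\mbb{M}$, whereas the paper instead defines the boundary specializations $\rho^{+}_{0}$ and $\rho^{-}_{N}$ separately so that the telescoping works in both cases.
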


\subsection{Proof of Theorem \ref{thm:MacMahon_cylindric_partitions}}
This subsection is devoted to a proof of Theorem \ref{thm:MacMahon_cylindric_partitions}.
The idea follows \cite{Vuletic2009, CorteelSaveliefVuletic2011}; we express each weight (\ref{eq:weight_cylindric_partition}) using a weight of a periodic Macdonald process given a certain family of specializations.

To compare our convention to that adopted in \cite{Vuletic2009}, let us set, for $n,m\in\mbb{Z}_{\ge 0}$,
\begin{equation}
	\tilde{f}(n,m)=
	\begin{cases}
		\prod_{i=0}^{n-1}\frac{1-q^{i}t^{m+1}}{1-q^{i+1}t^{m}}, & n\ge 1,\\
		1, & n=0.
	\end{cases}
\end{equation}
Then we have
\begin{equation}
	\tilde{f}(n,m)=\frac{(t^{m+1};q)_{\infty}}{(t^{m}q;q)_{\infty}}\frac{1}{f(q^{n}t^{m})}.
\end{equation}
Therefore, the weight for each box (\ref{eq:weight_cylindric_partition_building_block}) is also written as
\begin{equation}
	F_{\bm{\lambda}}(k,j;q,t)=\prod_{m=0}^{\infty}\frac{\tilde{f}(\lambda_{1}-\mu_{m+1},m)\tilde{f}(\lambda_{1}-\nu_{m+1},m)}{\tilde{f}(\lambda_{1}-\lambda_{m+1},m)\tilde{f}(\lambda_{1}-\lambda_{m+2},m)},
\end{equation}
which is the same expression as its equivalent for plane partitions found in \cite{Vuletic2009}, where the function $\tilde{f}$ here is simply denoted by $f$.
On the other hand, our convention of the function $f$ is taken from \cite{Macdonald1995}.

Given a boundary profile $\mbb{M}\subset [1,N]$, we apply the following specializations to the weight $W^{\bm{\rho}^{+},\bm{\rho}^{-}}_{q,t;u}(\bm{\lambda},\bm{\mu})$ in Eq. (\ref{eq:periodic_weight}).
For $k=1,\dots, N-1$, if $k\in\mbb{M}$, then
\begin{align}
	\rho^{+}_{k}&:x_{1}=s^{-k},\ \ x_{2}=x_{3}=\cdots =0,\\
	\rho^{-}_{k}&:x_{1}=x_{2}=\cdots =0,
\end{align}
and if $k\not\in\mbb{M}$, then
\begin{align}
	\rho^{+}_{k}&:x_{1}=x_{2}=\cdots =0,\\
	\rho^{-}_{k}&:x_{1}=s^{k},\ \ x_{2}=x_{3}=\cdots =0.
\end{align}
The specializations $\rho^{+}_{0}$ and $\rho^{-}_{N}$ are defined so that if $N\in\mbb{M}$, then
\begin{align}
	\rho^{+}_{0}&:x_{1}=1,\ \ x_{2}=x_{3}=\cdots=0, \\
	\rho^{-}_{N}&:x_{1}=x_{2}=\cdots =0,
\end{align}
and if $N\not\in\mbb{M}$, then
\begin{align}
	\rho^{+}_{0}&:x_{1}=x_{2}=\cdots =0,\\
	\rho^{+}_{N}&:x_{1}=s^{N},\ \ x_{2}=x_{3}=\cdots =0.
\end{align}

\begin{lem}
Under the above specializations, the weight $W^{\bm{\rho}^{+},\bm{\rho}^{-}}_{u}(\bm{\lambda},\bm{\mu})$ vanishes unless
$\mu^{k}=\lambda^{k}$ when $k\in\mbb{M}$ and $\mu^{k}=\lambda^{k+1}$ when $k\not\in\mbb{M}$ for all $k=1,\dots, N$, and $\bm{\lambda}$ is a cylindric partition of periodicity $N$ and boundary profile $\mbb{M}$.
Moreover, for such $\bm{\mu}$, we have
\begin{equation}
	W^{\bm{\rho}^{+},\bm{\rho}^{-}}_{s^{N}}(\bm{\lambda},\bm{\mu})=\Phi_{\bm{\lambda}}(q,t) s^{|\bm{\lambda}|},
\end{equation}
where
\begin{equation}
	\Phi_{\bm{\lambda}}(q,t):=\prod_{i=1}^{N}J_{[\lambda^{i+1}/\lambda^{i}]}(q,t),\ \ 
	J_{[\lambda^{i+1}/\lambda^{i}]}(q,t):=
	\begin{cases}
		\psi_{\lambda^{i+1}/\lambda^{i}}(q,t), & i\in\mbb{M},\\
		\varphi_{\lambda^{i}/\lambda^{i+1}}(q,t), & i\not\in\mbb{M}.
	\end{cases}
\end{equation}
\end{lem}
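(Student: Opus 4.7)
The plan is to evaluate the weight
$$W^{\bm{\rho}^{+},\bm{\rho}^{-}}_{q,t;s^{N}}(\bm{\lambda},\bm{\mu})=s^{N|\mu^{N}|}\prod_{i=1}^{N}Q_{\lambda^{i}/\mu^{i}}(\rho^{-}_{i};q,t)P_{\lambda^{i+1}/\mu^{i}}(\rho^{+}_{i};q,t)$$
factor by factor, relying on two elementary single-variable identities for skew Macdonald functions: (i) $P_{\lambda/\mu}(\emptyset;q,t)=Q_{\lambda/\mu}(\emptyset;q,t)=\delta_{\lambda,\mu}$, and (ii) for a single variable $y$,
$$P_{\lambda/\mu}(y;q,t)=\psi_{\lambda/\mu}(q,t)\,y^{|\lambda|-|\mu|},\qquad Q_{\lambda/\mu}(y;q,t)=\varphi_{\lambda/\mu}(q,t)\,y^{|\lambda|-|\mu|},$$
both understood as zero unless $\mu\prec\lambda$. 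Fact (ii) follows from iterating the single-variable branching $P_{\lambda}(x_{1},\ldots,x_{n})=\sum_{\mu}P_{\lambda/\mu}(x_{n})P_{\mu}(x_{1},\ldots,x_{n-1})$ together with the single-row Pieri rules recalled in Subsect.\ \ref{subsect:Fock_representation}.

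Next I would run the case analysis for the interior indices $k\in\{1,\ldots,N-1\}$. When $k\in\mbb{M}$, the vanishing of $\rho^{-}_{k}$ forces $\mu^{k}=\lambda^{k}$ (with $Q$-factor equal to $1$), and the single-variable specialization $\rho^{+}_{k}=s^{-k}$ turns the $P$-factor into $\psi_{\lambda^{k+1}/\lambda^{k}}(q,t)\,s^{-k(|\lambda^{k+1}|-|\lambda^{k}|)}$, automatically enforcing $\lambda^{k}\prec\lambda^{k+1}$. Symmetrically, for $k\notin\mbb{M}$ the vanishing of $\rho^{+}_{k}$ fixes $\mu^{k}=\lambda^{k+1}$ and the single-variable $\rho^{-}_{k}=s^{k}$ gives $\varphi_{\lambda^{k}/\lambda^{k+1}}(q,t)\,s^{k(|\lambda^{k}|-|\lambda^{k+1}|)}$, enforcing $\lambda^{k+1}\prec\lambda^{k}$. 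The surviving horizontal-strip constraints are exactly those defining a cylindric partition of periodicity $N$ and profile $\mbb{M}$.

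The wrap-around slot $i=N$ needs a little more care, because it couples $\rho^{-}_{N}$ with $\rho^{+}_{0}=\rho^{+}_{N}$ and carries the extra factor $u^{|\mu^{N}|}=s^{N|\mu^{N}|}$. I would split into two subcases: if $N\in\mbb{M}$ then $\mu^{N}=\lambda^{N}$, the evaluation of $P_{\lambda^{1}/\lambda^{N}}$ at $x_{1}=1$ contributes $\psi_{\lambda^{1}/\lambda^{N}}(q,t)$ with no power of $s$, and $u^{|\mu^{N}|}$ contributes $s^{N|\lambda^{N}|}$; if $N\notin\mbb{M}$ then $\mu^{N}=\lambda^{1}$, the evaluation of $Q_{\lambda^{N}/\lambda^{1}}$ at $x_{1}=s^{N}$ contributes $\varphi_{\lambda^{N}/\lambda^{1}}(q,t)\,s^{N(|\lambda^{N}|-|\lambda^{1}|)}$, and $u^{|\mu^{N}|}$ contributes $s^{N|\lambda^{1}|}$. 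A quick check shows that in both subcases the combined boundary $s$-power equals $s^{N|\lambda^{N}|}$, so the two possibilities can be uniformised.

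Finally I would collect everything. The combinatorial part of $W$ is the product of exactly one $\psi$- or $\varphi$-factor per slot, which matches $\prod_{i=1}^{N}J_{[\lambda^{i+1}/\lambda^{i}]}(q,t)=\Phi_{\bm{\lambda}}(q,t)$ by construction. For the total power of $s$, an interior slot $k\in\{1,\ldots,N-1\}$ contributes the exponent $k(|\lambda^{k}|-|\lambda^{k+1}|)$ regardless of whether $k\in\mbb{M}$, so Abel summation yields
$$\sum_{k=1}^{N-1}k\bigl(|\lambda^{k}|-|\lambda^{k+1}|\bigr)+N|\lambda^{N}|=\sum_{k=1}^{N}|\lambda^{k}|=|\bm{\lambda}|,$$
which is exactly the exponent claimed. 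The only real (and mild) subtlety is aligning the two $i=N$ subcases so that the resulting boundary $s$-power is uniform; once that is done, the remainder is a clean telescoping.
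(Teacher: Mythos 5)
Your proposal is correct and takes essentially the same route as the paper's own proof: both arguments evaluate the weight slot by slot using the zero and single-variable specializations of the skew Macdonald functions (Macdonald VI, (7.14) and (7.14$'$)), treat the wrap-around slot $i=N$ separately so that the factor $u^{|\mu^{N}|}=s^{N|\mu^{N}|}$ combines with the boundary specialization to give $s^{N|\lambda^{N}|}$ in either subcase, and finish with the same telescoping sum $\sum_{k=1}^{N-1}k(|\lambda^{k}|-|\lambda^{k+1}|)+N|\lambda^{N}|=|\bm{\lambda}|$. No gaps.
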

\begin{proof}
The proof goes in the analogous way as the proof of \cite[Proposition 2.3]{Vuletic2009}.
Since for each $k=1,\dots, N$, either $\rho^{+}_{k}$ or $\rho^{-}_{k}$ is the zero specialization, the weight vanishes unless $\mu^{k}=\lambda^{k}$ if $k\in\mbb{M}$ and $\mu^{k}=\lambda^{k+1}$ if $k\not\in\mbb{M}$.
The formula \cite[Chapter VI, (7.14) and (7.14$^{\prime}$)]{Macdonald1995} for the single variable specializations of the Macdonald symmetric functions
verifies that the weight $W^{\bm{\rho}^{+},\bm{\rho}^{-}}_{u}(\bm{\lambda},\bm{\mu})$ is nonzero only if $\bm{\lambda}\in \mcal{CP}(N,\mbb{M})$.
Noting that, we defined the specializations $\rho^{+}_{0}$ and $\rho^{-}_{N}$ at the {\it boundary} separately from the others, for $k=1,\dots, N-1$, we have
\begin{align}
	Q_{\lambda^{k}/\mu^{k}}(\rho^{-}_{k})P_{\lambda^{k+1}/\mu^{k}}(\rho^{+}_{k})=
	\begin{cases}
		\chi[\mu^{k}=\lambda^{k}]\psi_{\lambda^{k+1}/\lambda^{k}}(q,t)s^{-k(|\lambda^{k+1}|-|\lambda^{k}|)}, & k\in\mbb{M}, \\
		\chi[\mu^{k}=\lambda^{k+1}]\varphi_{\lambda^{k}/\lambda^{k+1}}(q,t)s^{k(|\lambda^{k}|-|\lambda^{k+1}|)}, & k\not\in\mbb{M},
	\end{cases}
\end{align}
and
\begin{align}
	s^{N|\mu^{N}|}Q_{\lambda^{N}/\mu^{N}}(\rho^{-}_{N})P_{\lambda^{1}/\mu^{N}}(\rho^{+}_{0})=
	\begin{cases}
		\chi[\mu^{N}=\lambda^{N}]\psi_{\lambda^{1}/\lambda^{N}}(q,t)s^{N|\lambda^{N}|}, & N\in\mbb{M}, \\
		\chi[\mu^{N}=\lambda^{1}]\varphi_{\lambda^{N}/\lambda^{1}}(q,t)s^{N|\lambda^{N}|}, & N\not\in\mbb{M}.
	\end{cases}
\end{align}
Therefore, we can see that
\begin{align}
	W^{\bm{\rho}^{+},\bm{\rho}^{-}}_{s^{N}}(\bm{\lambda},\bm{\mu})=\Phi_{\bm{\lambda}}(q,t)s^{\sum_{k=1}^{N-1}k(|\lambda^{k}|-|\lambda^{k+1}|)+N|\lambda^{N}|}=\Phi_{\bm{\lambda}}(q,t)s^{|\bm{\lambda}|},
\end{align}
where $\mu^{k}=\lambda^{k}$ if $k\in\mbb{M}$ and $\mu^{k}=\lambda^{k+1}$ if $k\not\in\mbb{M}$, $k=1,\dots, N$.
\end{proof}

It is obvious from the definition of the weight $W^{\bm{\rho}^{+},\bm{\rho}^{-}}_{u}(\bm{\lambda},\bm{\mu})$ that
\begin{equation}
	\sum_{\bm{\lambda}\in\mcal{CP}(N,\mbb{M})}\Phi_{\bm{\lambda}}(q,t)s^{|\bm{\lambda}|}=\sum_{\bm{\lambda},\bm{\mu}\in\mbb{Y}^{N}}W_{s^{N}}^{\bm{\rho}^{+},\bm{\rho}^{-}}(\bm{\lambda},\bm{\mu})=\Pi_{q,t;s^{N}}(\bm{\rho}^{+};\bm{\rho}^{-}).
\end{equation}
Now the right hand side can be further computed by means of Proposition \ref{prop:Cauchy_identity} to be
\begin{align}
	\Pi_{q,t;s^{N}}(\bm{\rho}^{+};\bm{\rho}^{-})=&\frac{1}{(s^{N};s^{N})_{\infty}}\prod_{\substack{k<l\\ k\in\mbb{M},\ l\not\in\mbb{M}}}\frac{(ts^{l-k};q,s^{N})_{\infty}}{(s^{l-k};q,s^{N})_{\infty}}\prod_{\substack{k>l \\ k\in\mbb{M},\ l\not\in\mbb{M}}}\frac{(ts^{N-k+l};q,s^{N})_{\infty}}{(s^{N-k+l};q,s^{N})_{\infty}} \\
	=&\frac{1}{(s^{N};s^{N})}\prod_{k\in\mbb{M},\ l\not\in\mbb{M}}\frac{(ts^{[l-k]};q,s^{N})_{\infty}}{(s^{[l-k]};q,s^{N})_{\infty}}. \notag
\end{align}

It remains to show the following property.
\begin{lem}
For every $\bm{\lambda}\in\mcal{CP}(N,\mbb{M})$, we have
\begin{equation}
\label{eq:coincidence_coefficients_of_weights}
	F_{\bm{\lambda}}(q,t)=\Phi_{\bm{\lambda}}(q,t).
\end{equation}
\end{lem}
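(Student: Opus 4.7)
The strategy follows that of Vuletić \cite{Vuletic2009} for plane partitions and its cylindric extension in \cite{CorteelSaveliefVuletic2011, Langer2012}. The crucial structural feature of $F_{\bm{\lambda}}(k,j;q,t)$ defined in (\ref{eq:weight_cylindric_partition_building_block}) is that its numerator and denominator separate cleanly into a part depending only on the pair $(\lambda^{k},\lambda^{k+1})$ and a part depending only on $(\lambda^{k-1},\lambda^{k})$:
\begin{equation*}
F_{\bm{\lambda}}(k,j;q,t) = G^{\rightarrow}_{\bm{\lambda}}(k,j)\, G^{\leftarrow}_{\bm{\lambda}}(k,j),
\end{equation*}
where, with $\lambda,\mu,\nu$ as in (\ref{eq:partition_associated_to_a_box_1})--(\ref{eq:partition_associated_to_a_box_3}),
\begin{equation*}
G^{\rightarrow}_{\bm{\lambda}}(k,j) := \prod_{m\ge 0} \frac{f(q^{\lambda_{1}-\lambda_{m+1}}t^{m})}{f(q^{\lambda_{1}-\mu_{m+1}}t^{m})}, \qquad G^{\leftarrow}_{\bm{\lambda}}(k,j) := \prod_{m\ge 0} \frac{f(q^{\lambda_{1}-\lambda_{m+2}}t^{m})}{f(q^{\lambda_{1}-\nu_{m+1}}t^{m})}.
\end{equation*}
Here $G^{\rightarrow}_{\bm{\lambda}}(k,j)$ involves only $\lambda^{k}$ and $\lambda^{k+1}$, while $G^{\leftarrow}_{\bm{\lambda}}(k,j)$ involves only $\lambda^{k-1}$ and $\lambda^{k}$.

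Taking the product over all boxes $(k,j)$ and using the cylindric identifications $\lambda^{0}=\lambda^{N}$ and $\lambda^{N+1}=\lambda^{1}$, a cyclic shift of the $k$-index pairs $G^{\leftarrow}_{\bm{\lambda}}(k+1,j)$ with $G^{\rightarrow}_{\bm{\lambda}}(k,j)$, both of which depend on the same adjacent pair $(\lambda^{k},\lambda^{k+1})$. Hence
\begin{equation*}
F_{\bm{\lambda}}(q,t) = \prod_{k=1}^{N}\prod_{j\ge 1} G^{\rightarrow}_{\bm{\lambda}}(k,j)\, G^{\leftarrow}_{\bm{\lambda}}(k+1,j),
\end{equation*}
and the problem reduces to an edge-by-edge identity: for each $k$, show that
\begin{equation*}
\prod_{j\ge 1} G^{\rightarrow}_{\bm{\lambda}}(k,j)\, G^{\leftarrow}_{\bm{\lambda}}(k+1,j) \;=\; J_{[\lambda^{k+1}/\lambda^{k}]}(q,t),
\end{equation*}
i.e. $\psi_{\lambda^{k+1}/\lambda^{k}}(q,t)$ when $k\in\mbb{M}$ (so $\lambda^{k}\prec\lambda^{k+1}$) and $\varphi_{\lambda^{k}/\lambda^{k+1}}(q,t)$ when $k\not\in\mbb{M}$ (so $\lambda^{k+1}\prec\lambda^{k}$).

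This per-edge identity is the Macdonald-level telescoping formula for the Pieri coefficients $\psi$, $\varphi$ on horizontal strips in terms of the function $f(u)=(tu;q)_{\infty}/(qu;q)_{\infty}$, which is established in \cite[Chapter VI]{Macdonald1995} and used explicitly in \cite{Vuletic2009}. The role of the indicators $\chi [k\in\mbb{M}]$ and $\chi [k-1\not\in\mbb{M}]$ in the definitions (\ref{eq:partition_associated_to_a_box_2})--(\ref{eq:partition_associated_to_a_box_3}) is precisely to align the row indices of the truncations of $\lambda^{k\pm 1}$ in $\mu,\nu$ with the ``upper'' and ``lower'' partitions of the corresponding horizontal strip, regardless of which direction the strip points along the cylinder. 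The main obstacle will be this case analysis: in the two cases $k\in\mbb{M}$ and $k\not\in\mbb{M}$, one must verify that the combined product indeed matches the correct Pieri coefficient, a check that is straightforward once one notes that $\lambda^{i}_{m}\to 0$ ensures absolute convergence of every infinite product and the telescoping is entirely analogous to the non-cylindric argument of Vuletić, with closure handled by the identification $\lambda^{0}=\lambda^{N}$. Once the per-edge identity is in place, multiplying over $k=1,\dots,N$ delivers $\Phi_{\bm{\lambda}}(q,t)$, completing the proof.
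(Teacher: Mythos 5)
Your proposal is correct and follows essentially the same route as the paper, which simply defers to the argument of \cite[Proposition 2.4]{Vuletic2009} with attention to the boundary profile; your decomposition of each box weight into the two half-factors $G^{\rightarrow}$ and $G^{\leftarrow}$, the cyclic regrouping into per-edge products, and the identification of each edge product with the Pieri coefficient via the $f$-product formulas of \cite[Chapter VI]{Macdonald1995} is precisely the content of that argument. In fact you supply more structural detail than the paper's own one-line reduction, and your observation that periodicity removes the boundary terms of the non-cylindric case matches the paper's remark.
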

\begin{proof}
This is the analogy of \cite[Proposition 2.4]{Vuletic2009} for the case of plane partitions.
Note that the periodicity of a sequence of partitions do not make any difference regarding Eq.(\ref{eq:coincidence_coefficients_of_weights}),
and the only difference is that we impose a boundary profile.
Therefore, Eq.~(\ref{eq:coincidence_coefficients_of_weights}) is proved by modifying the proof of \cite[Proposition 2.4]{Vuletic2009} taking care of the effects of a boundary profile.
\end{proof}

\subsection{Limiting cases}
Let us investigate some limiting cases.
Obviously, at the Schur-limit $q\to t$, the weight $F_{\bm{\lambda}}(q,t)$ reduces to the unity and our formula recovers the MacMahon formula for cylindric partitions presented in \cite{Borodin2007}.
\begin{cor}
Let $N\in\mbb{N}$ and let $\mbb{M}$ be a boundary profile. Then
\begin{equation}
	\sum_{\bm{\lambda}\in\mcal{CP}(N,\mbb{M})}s^{|\bm{\lambda}|}=\frac{1}{(s^{N};s^{N})_{\infty}}\prod_{k\in\mbb{M},l\not\in\mbb{M}}\frac{1}{(s^{[l-k]};s^{N})_{\infty}}.
\end{equation}
\end{cor}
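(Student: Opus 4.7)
The plan is to obtain this corollary as the Schur-limit $q\to t$ of Theorem \ref{thm:MacMahon_cylindric_partitions}, already established in the subsection just above. The reduction factors neatly into two independent simplifications: the cylindric weight $F_{\bm{\lambda}}(q,t)$ trivializes, and each factor in the infinite product collapses from a double $q$-Pochhammer to a single one.

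First I would examine the elementary function $f(u)=(tu;q)_{\infty}/(qu;q)_{\infty}$ that builds up the box weight (\ref{eq:weight_cylindric_partition_building_block}). At $q=t$ numerator and denominator coincide, so $f(u)\equiv 1$. Since $F_{\bm{\lambda}}(k,j;q,t)$ is a finite ratio of such $f$-values (only finitely many $m$ contribute because $\lambda_{m+1}=\mu_{m+1}=\nu_{m+1}=0$ for large $m$), the box-wise weight is $1$, and hence the total weight (\ref{eq:weight_cylindric_partition}) satisfies $F_{\bm{\lambda}}(q,t)\big|_{q=t}=1$ for every cylindric partition $\bm{\lambda}$. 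Thus the left-hand side of Theorem \ref{thm:MacMahon_cylindric_partitions} degenerates into the plain generating function $\sum_{\bm{\lambda}\in\mcal{CP}(N,\mbb{M})}s^{|\bm{\lambda}|}$.

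Second I would simplify each factor in the right-hand side. For $k\in\mbb{M}$, $l\not\in\mbb{M}$, setting $q=t$ gives
\begin{equation*}
\frac{(ts^{[l-k]};q,s^{N})_{\infty}}{(s^{[l-k]};q,s^{N})_{\infty}}\Big|_{q=t}
=\prod_{i,j\ge 0}\frac{1-t^{i+1}s^{[l-k]+jN}}{1-t^{i}s^{[l-k]+jN}}
=\prod_{j\ge 0}\frac{1}{1-s^{[l-k]+jN}}
=\frac{1}{(s^{[l-k]};s^{N})_{\infty}},
\end{equation*}
where the $i$-product telescopes since for each fixed $j$ the factor $(1-t^{i}s^{[l-k]+jN})$ with $i=0$ survives in the denominator and all others cancel. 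Combining with the first step and with the universal $1/(s^N;s^N)_{\infty}$ prefactor yields the asserted identity.

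The only point requiring mild care is legitimacy of the termwise limit: for $|s|<1$ the double $q$-Pochhammer converges uniformly in $(q,t)$ on compact subsets of $(0,1)^2$ and the individual factors $F_{\bm{\lambda}}(q,t)$ are rational with nonvanishing denominators near $q=t$, so continuity at $q=t$ is immediate and no obstacle arises. I therefore expect this corollary to be essentially a one-line consequence of Theorem \ref{thm:MacMahon_cylindric_partitions} once these two simplifications are recorded.
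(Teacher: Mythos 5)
Your proposal is correct and follows the same route as the paper, which simply observes that at $q=t$ the weight $F_{\bm{\lambda}}(q,t)$ becomes unity (since $f(u)=(tu;q)_{\infty}/(qu;q)_{\infty}\equiv 1$ there) and the right-hand side of Theorem \ref{thm:MacMahon_cylindric_partitions} telescopes accordingly. You have merely written out the details that the paper leaves as ``obvious,'' and both the trivialization of the box weights and the collapse of the double $q$-Pochhammer factors are verified correctly.
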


The generalized MacMahon formula for plane partitions \cite{Vuletic2009} is recovered at the infinite period limit $N\to\infty$.
To see this, we translate a boundary profile $\mbb{M}$ in $\mbb{Z}$ and understand $\mbb{M}\subset \set{-N,-N+1,\dots, N-2, N-1}$.
Therefore, it is a boundary profile of cylindric partitions of periodicity $2N$.
In particular, if we take $\mbb{M}=\set{-N,-N+1,\dots,-1}$, we have
\begin{equation}
	\sum_{\bm{\lambda}\in\mcal{CP}(2N,\mbb{M})}F_{\bm{\lambda}}(q,t)s^{|\bm{\lambda}|}=\frac{1}{(s^{2N};s^{2N})_{\infty}}\prod_{k=-N}^{-1}\prod_{l=0}^{N-1}\frac{(ts^{l-k};q,s^{2N})_{\infty}}{(s^{l-k};q,s^{2N})_{\infty}}.
\end{equation}
At the limit $N\to\infty$, the collection $\mcal{CP}(2N,\mbb{M})$ of cylindric partitions approaches the collection of plane partitions $\mcal{P}$
and, assuming $s\in (0,1)$, the right hand side converges to
\begin{equation}
	\prod_{k=-\infty}^{-1}\prod_{l=0}^{\infty}\frac{(ts^{l-k};q)_{\infty}}{(s^{l-k};q)_{\infty}}=\prod_{n=1}^{\infty}\left(\frac{(ts^{n};q)_{\infty}}{(s^{n};q)_{\infty}}\right)^{n}.
\end{equation}

To describe the Hall--Littlewood limit $q\to 0$, we introduce some notions.
Let us fix a cylindric partition $\bm{\lambda}\in\mcal{CP}(N,\mbb{M})$ of periodicity $N$ and boundary profile $\mbb{M}$.
For a position $(k,j)\in [1,N]\times \mbb{N}$, the level $h(k,j)$ is defined by
\begin{equation}
	h(k,j)=\mrm{min}\set{h\in\mbb{N}|\lambda^{k}_{j+h}<\lambda^{k}_{j}}.
\end{equation}
We also say that a position $(k,j)$ is adjacent to $(k+1,j-\chi [k\not\in\mbb{M}])$ and $(k+1,j+\chi [k\in\mbb{M}])$. Then the support of $\bm{\lambda}$, the collection of positions with nonzero entries, is decomposed into a disjoint union of connected components, each of which consists of positions of the same level. Obviously, such a connected component contains at most $N$ boxes and, if it indeed consists of $N$ boxes, it winds around the cylinder.
In contrast to such a {\it global} connected component, we call a connected component of a fixed level $B$ is {\it local} if $|B|<N$.
Now, to the cylindric partition $\bm{\lambda}$, we associate a weight
\begin{equation}
	A_{\bm{\lambda}}(t)=\prod_{B\subset\mrm{Supp}(\bm{\lambda}):\mrm{local}}(1-t^{h(B)}),
\end{equation}
where the product runs over local connected components of fixed levels and $h(B)$ is the level of boxes constituting $B$.
The following proposition obtained in \cite{CorteelSaveliefVuletic2011} is the Hall--Littlewood limit of Theorem \ref{thm:MacMahon_cylindric_partitions}.
\begin{prop}
\label{prop:MacMahon_HL}
Let $N\in\mbb{N}$ and let $\mbb{M}$ be a boundary profile. Then
\begin{equation}
	\sum_{\bm{\lambda}\in\mcal{CP}(N,\mbb{M})}A_{\bm{\lambda}}(t)s^{|\bm{\lambda}|}=\frac{1}{(s^{N};s^{N})_{\infty}}\prod_{k\in\mbb{M},l\not\in\mbb{M}}\frac{(ts^{[l-k]};s^{N})_{\infty}}{(s^{[l-k]};s^{N})_{\infty}}.
\end{equation}
\end{prop}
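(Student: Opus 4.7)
The plan is to derive Proposition~\ref{prop:MacMahon_HL} as the Hall--Littlewood specialization $q \to 0$ of Theorem~\ref{thm:MacMahon_cylindric_partitions}. On the right-hand side, I use $(a;q,s^{N})_{\infty} = \prod_{i,j \geq 0}(1 - a q^{i}s^{Nj})$ and observe that only the $i=0$ layer survives at $q=0$, which immediately yields the ratio $(ts^{[l-k]};s^{N})_{\infty}/(s^{[l-k]};s^{N})_{\infty}$ matching the proposition. The prefactor $(s^{N};s^{N})_{\infty}^{-1}$ is independent of $q$. The non-trivial content is therefore to show $\lim_{q \to 0}F_{\bm{\lambda}}(q,t) = A_{\bm{\lambda}}(t)$ for every $\bm{\lambda} \in \mcal{CP}(N,\mbb{M})$.

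For the local analysis I use that $f(u) = (tu;q)_{\infty}/(qu;q)_{\infty} \to 1 - tu$ as $q \to 0$. Consequently, $f(q^{\lambda^{k}_{j} - \lambda^{k}_{j+m}} t^{m})|_{q=0}$ equals $1 - t^{m+1}$ when $\lambda^{k}_{j+m} = \lambda^{k}_{j}$ (i.e.\ when $m < h(k,j)$) and equals $1$ otherwise, and analogous statements hold for the factors involving $\mu$ and $\nu$ from (\ref{eq:partition_associated_to_a_box_2})--(\ref{eq:partition_associated_to_a_box_3}). Thus $F_{\bm{\lambda}}(k,j;0,t)$ becomes a finite product of $(1-t^{m+1})^{\pm 1}$ whose exponents depend only on level coincidences among $(k,j)$ and its four cylindric neighbours, where the neighbour shifts are controlled by $\chi[k \in \mbb{M}]$ and $\chi[k-1 \notin \mbb{M}]$.

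The main step is then the telescoping argument obtained by summing these local exponents over all $(k,j)$. I would organise the sum by walking around the boundary of each connected component $B \subset \mathrm{Supp}(\bm{\lambda})$ at level $h(B)$: contributions from pairs of positions interior to $B$ cancel, and the surviving factors are indexed by edges of the cylindric diagram joining $B$ to a position of strictly lower level. For a local component this boundary is nonempty and produces a single surviving factor $(1-t^{h(B)})$, which is precisely the contribution of $B$ to $A_{\bm{\lambda}}(t)$. For a global (winding) component with $|B|=N$ the boundary closes up around the cylinder and all factors cancel, contributing $1$. This follows the strategy of \cite[Proposition~2.4]{Vuletic2009} in the plane partition case, adapted to periodicity along the lines of \cite{CorteelSaveliefVuletic2011}; the bookkeeping novelty is that neighbour relations switch direction across the profile $\mbb{M}$.

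The main obstacle is precisely this combinatorial cancellation argument. While it is clear that factors $(1-t^{m+1})$ with $m+1 \neq h(B)$ must telescope to $1$, one must verify carefully that the signed count of $(1-t^{h(B)})$ factors along the cylindric boundary of each component equals $1$ for local components and $0$ for winding components, keeping track of the shifts $\chi[k \in \mbb{M}]$ introduced in the definitions (\ref{eq:partition_associated_to_a_box_2}) and (\ref{eq:partition_associated_to_a_box_3}). Once this count is established, the equality $F_{\bm{\lambda}}(0,t) = A_{\bm{\lambda}}(t)$ follows, and combining it with the limit of the right-hand side of Theorem~\ref{thm:MacMahon_cylindric_partitions} completes the proof.
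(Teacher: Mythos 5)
Your proposal follows essentially the same route as the paper: take the $q\to 0$ limit of Theorem~\ref{thm:MacMahon_cylindric_partitions} (only the $i=0$ layer of each triple Pochhammer survives on the right-hand side) and reduce everything to the box-by-box identity $F_{\bm{\lambda}}(0,t)=A_{\bm{\lambda}}(t)$. The combinatorial count you flag as the main remaining obstacle is precisely the step the paper carries out, and it closes more directly than a boundary-walk: once one notes that the $m\le h-2$ factors cancel exactly by interlacing and the $m\ge h$ factors tend to $1$, only $m=h-1$ survives, each box contributes $(1-t^{h})^{1-c(k,j)}$ with $c(k,j)\in\{0,1,2\}$ counting its two designated same-component neighbours, and $\sum_{(k,j)\in B}c(k,j)$ equals $|B|-1$ for a local component and $N$ for a winding one, yielding $1-t^{h(B)}$ or $1$ as you anticipated.
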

\begin{proof}
Our goal is to show that $F_{\bm{\lambda}}(0,t)=A_{\bm{\lambda}}(t)$ for all $\bm{\lambda}\in\mcal{CP}(N,\mbb{M})$.
Fix $\bm{\lambda}$ and assume that $(k,j)\in [1,N]\times\mbb{N}$ is of level $h$ and belongs to a connected component $B$ of the same level. The weight $F_{\bm{\lambda}}(k,j;q,t)$ is decomposed into three parts:
\begin{align}
	F_{\bm{\lambda}}(k,j;q,t)=&\prod_{m=0}^{h-2}\frac{f(q^{\lambda_{1}-\lambda_{m+1}}t^{m})f(q^{\lambda_{1}-\lambda_{m+2}}t^{m})}{f(q^{\lambda_{1}-\mu_{m+1}}t^{m})f(q^{\lambda_{1}-\nu_{m+1}}t^{m})} \\
	&\times \frac{f(q^{\lambda_{1}-\lambda_{h}}t^{h-1})f(q^{\lambda_{1}-\lambda_{h+1}}t^{h-1})}{f(q^{\lambda_{1}-\mu_{h}}t^{h-1})f(q^{\lambda_{1}-\nu_{h}}t^{h-1})} \notag \\
	&\times \prod_{m=0}^{h-2}\frac{f(q^{\lambda_{1}-\lambda_{m+1}}t^{m})f(q^{\lambda_{1}-\lambda_{m+2}}t^{m})}{f(q^{\lambda_{1}-\mu_{m+1}}t^{m})f(q^{\lambda_{1}-\nu_{m+1}}t^{m})}. \notag
\end{align}
The partitions $\lambda,\mu,\nu$ associated with $(k,j)$ are defined in Eqs.~(\ref{eq:partition_associated_to_a_box_1})--(\ref{eq:partition_associated_to_a_box_3}).
The first part is identical to unity because, by definition of the level, we have
\begin{equation}
	\lambda_{1}\ge \mu_{1}\ge \cdots \ge \mu_{h-1}\ge \lambda_{h}=\lambda_{1},\ \ \lambda_{1}\ge \nu_{1}\ge \cdots \ge \nu_{h-1}\ge \lambda_{h}=\lambda_{1}.
\end{equation}
Since $f(u)\to 1-tu$ at the Hall--Littlewood limit $q\to 0$, the third parts converge to unity at that limit. Only the second part has nontrivial limit so that
\begin{equation}
	F_{\bm{\lambda}}(k,j;0,t)
	=\frac{1-t^{h}}{(1-t^{h})^{\chi [\lambda_{1}=\mu_{h}]}(1-t^{h})^{\chi [\lambda_{1}=\nu_{h}]}}=(1-t^{h})^{1-c(k,j)},
\end{equation}
where we defined
\begin{equation}
	c(k,j)=\chi [(k-1,j+\chi [k-1\not\in\mbb{M}])\in B]+\chi [(k+1,j+\chi [k\in\mbb{M}])\in B].
\end{equation}
Notice that
\begin{equation}
	\sum_{(k,j)\in B}c(k,j)=
	\begin{cases}
		|B|-1 & B:\mrm{local}, \\
		|B|=N & B:\mrm{global}.
	\end{cases}
\end{equation}
Therefore, we have
\begin{equation}
	\prod_{(k,j)\in B}F_{\bm{\lambda}}(k,j;0,t)=
	\begin{cases}
		1-t^{h} & B:\mrm{local}, \\
		1 & B:\mrm{global},
	\end{cases}
\end{equation}
which proves $F_{\bm{\lambda}}(0,t)=A_{\bm{\lambda}}(t)$.
\end{proof}

We say that a cylindric partition $\bm{\lambda}\in\mcal{CP}(N,\mbb{M})$ is a strict cylindric partition if there is no local connected component of level larger than 1. We denote $\mcal{SCP}(N,\mbb{M})$ is the collection of strict cylindric partitions in $\mcal{CP}(N,\mbb{M})$. Since the weight $A_{\bm{\lambda}}(t)$ is defined multiplicatively, it vanishes at $t=-1$ if $\bm{\lambda}$ contains a local connected component of level larger than 1. Therefore, we have
\begin{equation}
	A_{\bm{\lambda}}(-1)=\chi [\bm{\lambda}\in\mcal{SCP}(N,\mbb{M})] 2^{k(\bm{\lambda})},
\end{equation}
where $k(\bm{\lambda})$ is the number of local connected components (of level 1) in the support of $\bm{\lambda}$.
In \cite{FodaWheeler2007,Vuletic2007}, the authors found the shifted MacMahon formula for strict plane partitions. The following corollary of Proposition \ref{prop:MacMahon_HL} is the analogue of their formula for cylindric partitions.
\begin{cor}
Let $N\in\mbb{N}$ and let $\mbb{M}$ be a boundary profile. Then
\begin{equation}
	\sum_{\bm{\lambda}\in\mcal{SCP}(N,\mbb{M})}2^{k(\bm{\lambda})}s^{|\bm{\lambda}|}=\frac{1}{(s^{N};s^{N})_{\infty}}\prod_{k\in\mbb{M},l\not\in\mbb{M}}\frac{(-s^{[l-k]};s^{N})_{\infty}}{(s^{[l-k]};s^{N})_{\infty}}.
\end{equation}
\end{cor}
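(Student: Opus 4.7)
The plan is to specialize $t\to -1$ in the Hall--Littlewood MacMahon formula for cylindric partitions (Proposition \ref{prop:MacMahon_HL}). This substitution sends the right-hand side directly to $(s^{N};s^{N})_{\infty}^{-1}\prod_{k\in\mbb{M},l\notin\mbb{M}}(-s^{[l-k]};s^{N})_{\infty}/(s^{[l-k]};s^{N})_{\infty}$, matching exactly the product appearing in the corollary. It then remains to identify the left-hand side with the generating function for strict cylindric partitions weighted by $2^{k(\bm{\lambda})}$.

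For this, I would exploit the multiplicative structure $A_{\bm{\lambda}}(t)=\prod_{B:\,\mrm{local}}(1-t^{h(B)})$, where the product runs over local connected components of $\mrm{Supp}(\bm{\lambda})$ introduced in the discussion preceding Proposition \ref{prop:MacMahon_HL}. Evaluated at $t=-1$, each factor equals $1-(-1)^{h(B)}$, which is $2$ when $h(B)$ is odd and $0$ when $h(B)$ is even. In particular, the sum over $\mcal{CP}(N,\mbb{M})$ localizes to those $\bm{\lambda}$ for which every local component has odd level.

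The key combinatorial step, asserted just before the statement of the corollary, is then the identity
\begin{equation}
A_{\bm{\lambda}}(-1)=\chi[\bm{\lambda}\in\mcal{SCP}(N,\mbb{M})]\,2^{k(\bm{\lambda})}.
\end{equation}
One direction is immediate: if $\bm{\lambda}\in\mcal{SCP}(N,\mbb{M})$, then by definition all local components have level $1$, each contributing a factor $1-(-1)=2$, so $A_{\bm{\lambda}}(-1)=2^{k(\bm{\lambda})}$. The converse is the nontrivial part: any $\bm{\lambda}$ possessing a local component of level $h\ge 2$ must also carry a local component of even level, forcing $A_{\bm{\lambda}}(-1)=0$. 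I would verify this structural claim by inspecting, for a position $(k,j)$ of level $h\ge 2$, the chain of positions $(k,j),(k,j+1),\ldots,(k,j+h-1)$ in the same row, which by the definition of level carry levels $h,h-1,\ldots,1$ respectively, and then tracing how the horizontal-strip inclusions $\lambda^{k}\prec\lambda^{k\pm 1}$ (or $\succ$, depending on $k\in\mbb{M}$) propagate these intermediate-level positions into separate local components whose levels necessarily include an even integer. This propagation argument is where the main obstacle lies, since one must rule out the possibility of all these intermediate-level positions merging into a single global component of size $N$ winding around the cylinder.

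Combining the two reductions yields
\begin{equation}
\sum_{\bm{\lambda}\in\mcal{CP}(N,\mbb{M})}A_{\bm{\lambda}}(-1)s^{|\bm{\lambda}|}
=\sum_{\bm{\lambda}\in\mcal{SCP}(N,\mbb{M})}2^{k(\bm{\lambda})}s^{|\bm{\lambda}|},
\end{equation}
which, together with the right-hand side specialization, gives the asserted formula.
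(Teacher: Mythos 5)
Your proposal follows essentially the same route as the paper: substitute $t=-1$ into Proposition~\ref{prop:MacMahon_HL} and identify $A_{\bm{\lambda}}(-1)$ with $\chi[\bm{\lambda}\in\mcal{SCP}(N,\mbb{M})]\,2^{k(\bm{\lambda})}$. You are in fact more explicit than the paper, which merely asserts that $A_{\bm{\lambda}}(-1)$ vanishes whenever a local component of level larger than $1$ is present; the chain-of-levels argument you sketch, including the need to rule out that the relevant even-level component is global, is precisely the point the paper leaves implicit.
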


\section{Trace of Macdonald refined topological vertices}
\label{app:another_trace_topolgical_vertex}
A topological vertex is a combinatorial object that counts the number of plane partitions with prescribed asymptotic partitions.
In \cite{FodaWu2017}, the authors proposed a generalization of a topological vertex called a Macdonald refined topological vertex  that unifies the refinement of a topological vertex \cite{AwataKanno2005, IqbalKozcazVafa2009, AwataKanno2009, AwataFeiginShiraishi2012} and the Macdonald deformation \cite{Vuletic2009}.
For a triple of partitions $\lambda$, $\mu$ and $\nu$, the corresponding Macdonald refined topological vertex is
\begin{equation}
	\mcal{V}_{\lambda\mu\nu}(x,y;q,t)
	=\prod_{s\in\nu}\frac{(tx^{l_{\nu}(s)+1}y^{a_{\nu}(s)};q)_{\infty}}{(x^{l_{\nu}(s)+1}y^{a_{\nu}(s)};q)_{\infty}}\sum_{\eta\in\mbb{Y}}P_{\lambda/\eta}(y^{\rho-1}x^{-\pr{\nu}};q,t)Q_{\mu/\eta}(x^{\rho}y^{-\nu};q,t),
\end{equation}
where the specializations are defined by $y^{\rho-1}x^{-\pr{\nu}}:x_{i}\mapsto y^{i-1}x^{-\pr{\nu}_{i}}$ and $x^{\rho}y^{-\nu}:x_{i}\mapsto x^{i}y^{-\nu_{i}}$, $i\ge 1$.
When we set $q=t$, it reduces to the refined topological vertex and when we further set $x=y$, it reduces to the topological vertex \cite{AganagicKlemmMarinoVafa2005}.
In \cite{BryanKoolYoung2018}, the authors presented several trace identities for topological vertices.
A generalization of one of their results to a Macdonald refined topological vertex reads as follows:
\begin{thm}
\label{thm:trace_topological_vertex}
For any partition $\nu\in\mbb{Y}$, we have
\begin{align}
	&\sum_{\lambda\in\mbb{Y}}u^{|\lambda|}\mcal{V}_{\lambda\lambda\nu}(x,y;q,t) \\
	&=\frac{1}{(u;u)_{\infty}}\prod_{s\in\nu}\frac{(tx^{l_{\nu}(s)+1}y^{a_{\nu}(s)};q)_{\infty}}{(x^{l_{\nu}(s)+1}y^{a_{\nu}(s)};q)_{\infty}}\prod_{s\in\mrm{re}(\nu)}\frac{(tx^{-l_{\nu}(s)}y^{-a_{\nu}(s)-1};q,u)_{\infty}}{(x^{-l_{\nu}(s)}y^{-a_{\nu}(s)-1};q,u)_{\infty}} \notag \\
	&\hspace{20pt}\times  \prod_{s\in\del_{\mrm{B}}\nu}\frac{(tx^{-l_{\mrm{re}(\nu)}(s)+1}y^{j(s)-1};q,u,x)_{\infty}}{(x^{-l_{\mrm{re}(\nu)}(s)+1}y^{j(s)-1};q,u,x)_{\infty}}\prod_{s\in\del_{\mrm{R}}\nu}\frac{(tx^{i(s)}y^{a_{\mrm{re}(\nu)}(s)};q,u,y)_{\infty}}{(x^{i(s)}y^{a_{\mrm{re}(\nu)}(s)};q,u,y)_{\infty}} \notag \\
	&\hspace{20pt}\times \frac{(tx^{\ell (\nu)+1}y^{\nu_{1}};q,u,x,y)_{\infty}}{(x^{\ell (\nu)+1}y^{\nu_{1}};q,u,x,y)_{\infty}}. \notag
\end{align}
Here $\mrm{re}(\nu)=(\nu_{1}^{\ell (\nu)})$ is the rectangular envelop of $\nu$ and $\del_{\mrm{R}}\nu=\set{(i,\nu_{i}):i=1,\dots,\ell(\nu)}$ and $\del_{\mrm{B}}\nu=\set{(\pr{\nu}_{j},j):j=1,\dots,\nu_{1}}$ are the right and bottom boundaries of $\nu$.
\end{thm}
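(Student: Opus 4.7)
The plan is to recognize the left-hand side as a trace on the Fock space $\mcal{F}$ and then to reduce the identity to a combinatorial statement about the resulting infinite product. Setting $\mu=\lambda$ in the definition of $\mcal{V}_{\lambda\mu\nu}$, the prefactor $N_\nu:=\prod_{s\in\nu}\frac{(tx^{l_\nu(s)+1}y^{a_\nu(s)};q)_\infty}{(x^{l_\nu(s)+1}y^{a_\nu(s)};q)_\infty}$ is independent of $\lambda$, so I would first rewrite
\[
\sum_{\lambda}u^{|\lambda|}\mcal{V}_{\lambda\lambda\nu}(x,y;q,t)
=N_\nu\sum_{\lambda,\eta\in\mbb{Y}}u^{|\lambda|}P_{\lambda/\eta}(X;q,t)Q_{\lambda/\eta}(Y;q,t),
\]
where $X=y^{\rho-1}x^{-\pr{\nu}}$ and $Y=x^{\rho}y^{-\nu}$. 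Using the matrix-element realizations $P_{\lambda/\eta}(X)=\braket{Q_\eta|\Gamma(X)_+|P_\lambda}$ and $Q_{\lambda/\eta}(Y)=\braket{Q_\lambda|\Gamma(Y)_-|P_\eta}$ from Subsect.\ \ref{subsect:Fock_representation}, the spectral identity $\sum_\lambda u^{|\lambda|}\ket{P_\lambda}\bra{Q_\lambda}=u^{D}$, and the completeness $\sum_\eta\ket{P_\eta}\bra{Q_\eta}=\mrm{Id}$, the double sum collapses to $\mrm{Tr}_{\mcal{F}}(\Gamma(X)_+\,u^{D}\,\Gamma(Y)_-)$; by cyclicity this equals $\mrm{Tr}_{\mcal{F}}(u^{D}\,\Gamma(Y)_-\,\Gamma(X)_+)$, and since $\Gamma(Y)_-\Gamma(X)_+$ is already normally ordered, Proposition \ref{prop:trace_formula} collapses it to an explicit exponential in the bilinear $p_n(X)p_n(Y)$.

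The remaining task is combinatorial: to re-express that exponential, after writing $p_n(X)p_n(Y)=\sum_{i,j\ge 1}(x_iy_j)^n$ with $x_iy_j=x^{j-\pr{\nu}_i}y^{i-1-\nu_j}$, as the product of the four families of Pochhammer factors appearing on the right-hand side. My plan is to split the $(i,j)$-index set into the four regions dictated by the rectangular envelope $\mrm{re}(\nu)=(\nu_1^{\ell(\nu)})$: the rectangle $i\le\nu_1,\,j\le\ell(\nu)$, the right strip $i>\nu_1,\,j\le\ell(\nu)$, the bottom strip $i\le\nu_1,\,j>\ell(\nu)$, and the corner $i>\nu_1,\,j>\ell(\nu)$. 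On the rectangle the exponents identify directly with $-l_\nu(s)$ and $-a_\nu(s)-1$ for $s=(j,i)$, with $a_\nu,l_\nu$ allowed to be negative on $\mrm{re}(\nu)\setminus\nu$. On each of the two strips the geometric sum along the unbounded direction is carried out by the single identity $\prod_{k\ge 0}(a\alpha^k;q,u,\dots)_\infty=(a;q,u,\dots,\alpha)_\infty$, raising the base of the Pochhammer symbol by one variable and producing the $(\cdot;q,u,y)_\infty$- or $(\cdot;q,u,x)_\infty$-factor attached to the corresponding boundary cell of $\nu$. On the corner, two successive such resummations produce the single quadruple Pochhammer factor.

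The main obstacle is the careful bookkeeping on the rectangular region: for cells of $\nu$ the naive quantities $(x^{-l_\nu(s)}y^{-a_\nu(s)-1};q,u)_\infty$ carry $|a|>1$ and cannot be interpreted as literal convergent infinite products. The prefactor $N_\nu$ extracted at the outset is precisely what is needed, via the splitting $(a;q,u)_\infty=(a;q)_\infty(au;q,u)_\infty$, to reconcile these formal contributions with the displayed right-hand side. Once this identification is made, and the exponents of $x$ and $y$ produced by the resummations on each of the two strips and at the corner are matched against the arm and leg data $l_{\mrm{re}(\nu)}(s),\,a_{\mrm{re}(\nu)}(s),\,i(s),\,j(s)$ attached to cells of $\del_\mrm{B}\nu$ and $\del_\mrm{R}\nu$, the four regional contributions assemble term by term into the four asserted Pochhammer products, completing the proof.
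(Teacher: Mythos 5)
Your overall strategy --- rewrite the left-hand side as a trace of $\Gamma$-operators over $\mcal{F}$, apply the trace formula of Proposition \ref{prop:trace_formula}, and then decompose the resulting bilinear exponential over the four regions cut out by the rectangular envelope --- is the same as the paper's, and your second paragraph (the rectangle/strip/corner bookkeeping and the resummation $\prod_{k\ge 0}(a\alpha^{k};\dots)_{\infty}=(a;\dots,\alpha)_{\infty}$) matches the paper's computation. The gap is in the middle step. After cycling to $\mrm{Tr}_{\mcal{F}}(u^{D}\Gamma(Y)_{-}\Gamma(X)_{+})$ the operator \emph{is} normally ordered, so Proposition \ref{prop:trace_formula} yields
\[
\frac{1}{(u;u)_{\infty}}\exp\Bigl(\sum_{n>0}\frac{1-t^{n}}{1-q^{n}}\frac{u^{n}}{1-u^{n}}\frac{p_{n}(X)p_{n}(Y)}{n}\Bigr)
=\frac{1}{(u;u)_{\infty}}\prod_{i,j\ge 1}\frac{(tux_{i}y_{j};q,u)_{\infty}}{(ux_{i}y_{j};q,u)_{\infty}},
\]
with the factor $u^{n}/(1-u^{n})$, whereas the stated right-hand side is the four-region expansion of $\frac{1}{(u;u)_{\infty}}\prod_{i,j\ge 1}\frac{(tx_{i}y_{j};q,u)_{\infty}}{(x_{i}y_{j};q,u)_{\infty}}$, which carries $1/(1-u^{n})$. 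The paper obtains the latter because it identifies the trace with the $N=1$ partition function $\Pi_{q,t;u}$ of Proposition \ref{prop:Cauchy_identity}, i.e.\ with the trace of $u^{D}$ times the \emph{non}-normally-ordered product $\Gamma_{+}\Gamma_{-}$, where the OPE commutation of $\Gamma_{+}$ past $\Gamma_{-}$ contributes exactly the missing $u^{0}$-slice $\tilde{\Pi}_{q,t;0}(X;Y)=\exp\bigl(\sum_{n>0}\frac{1-t^{n}}{1-q^{n}}\frac{p_{n}(X)p_{n}(Y)}{n}\bigr)$. Whether the grading $u^{|\cdot|}$ sits on the outer partition $\lambda$ (your trace) or effectively on the inner one (the paper's displayed trace) is precisely what separates the two answers, and you need to resolve this point rather than let it slide.

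Your proposed repair --- that the prefactor $N_{\nu}$ supplies, via $(a;q,u)_{\infty}=(a;q)_{\infty}(au;q,u)_{\infty}$, the $(a;q)_{\infty}$ parts you are missing --- cannot work. The discrepancy is the factor $\tilde{\Pi}_{q,t;0}(X;Y)=\prod_{i,j\ge 1}\frac{(tx_{i}y_{j};q)_{\infty}}{(x_{i}y_{j};q)_{\infty}}$, an infinite product over \emph{all} pairs $(i,j)$: it contributes $u^{0}$-slices on every cell of the rectangular envelope (including $\mrm{re}(\nu)\setminus\nu$), on both boundary strips, and at the corner. By contrast $N_{\nu}$ is a finite product over the cells of $\nu$ only, with arguments $x^{l_{\nu}(s)+1}y^{a_{\nu}(s)}$ rather than $x^{-l_{\nu}(s)}y^{-a_{\nu}(s)-1}$, and in any case it must survive untouched as the first product of the stated right-hand side, so it is not available to be spent on the other three families of factors. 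As written, your plan produces the asserted formula with every multi-Pochhammer argument multiplied by $u$; to close the argument you must either justify the non-normally-ordered trace representation used by the paper, or prove a separate hook-type identity expressing $\tilde{\Pi}_{q,t;0}$ at these specializations as the required correction.
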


\begin{proof}
Let us write a Macdonald refined topological vertex in terms of a matrix element of vertex operators so that
\begin{equation}
	\mcal{V}_{\lambda\mu\nu}(x,y;q,t)=\prod_{s\in\nu}\frac{(tx^{l_{\nu}(s)+1}y^{a_{\nu}(s)};q)_{\infty}}{(x^{l_{\nu}(s)+1}y^{a_{\nu}(s)};q)_{\infty}}
	\braket{P_{\lambda}(q,t)|\Gamma (x^{-\pr{\nu}}y^{\rho -1})_{-}\Gamma (x^{\rho}y^{-\nu})_{+}|Q_{\mu}(q,t)}.
\end{equation}
Therefore, the left hand side of Theorem \ref{thm:trace_topological_vertex} is simply
\begin{align}
	\sum_{\lambda\in\mbb{Y}}u^{|\lambda|}\mcal{V}_{\lambda\lambda\nu}(x,y;q,t)
	&=\prod_{s\in\nu}\frac{(tx^{l_{\nu}(s)+1}y^{a_{\nu}(s)};q)_{\infty}}{(x^{l_{\nu}(s)+1}y^{a_{\nu}(s)};q)_{\infty}}\mrm{Tr}_{\mcal{F}}\left(u^{D}\Gamma (x^{\rho}y^{-\nu})_{+}\Gamma (x^{-\pr{\nu}}y^{\rho-1})_{-}\right) \\
	&=\prod_{s\in\nu}\frac{(tx^{l_{\nu}(s)+1}y^{a_{\nu}(s)};q)_{\infty}}{(x^{l_{\nu}(s)+1}y^{a_{\nu}(s)};q)_{\infty}}\Pi_{q,t;u}(x^{\rho}y^{-\nu};x^{-\pr{\nu}}y^{\rho-1}). \notag
\end{align}
For those specializations, we have
\begin{align}
	p_{n}(x^{\rho}y^{-\nu})&=\sum_{i=1}^{\ell(\nu)}(x^{i}y^{-\nu_{i}})^{n}+\frac{x^{n(\ell(\nu)+1)}}{1-x^{n}},\\
	p_{n}(x^{-\pr{\nu}}y^{\rho-1})&=\sum_{i=1}^{\nu_{1}}(x^{-\pr{\nu}_{i}}y^{i-1})^{n}+\frac{y^{n\nu_{1}}}{1-y^{n}}
\end{align}
for $n>0$.
Therefore,
\begin{align}
	\tilde{\Pi}_{q,t;u}(x^{\rho}y^{-\nu};x^{-\pr{\nu}}y^{\rho-1})
	=&\prod_{i=1}^{\ell(\nu)}\prod_{j=1}^{\nu_{1}}\frac{(tx^{i-\pr{\nu}_{j}}y^{j-\nu_{i}-1};q,u)_{\infty}}{(x^{i-\pr{\nu}_{j}}y^{j-\nu_{i}-1};q,u)_{\infty}} \\
	&\times \prod_{i=1}^{\ell(\nu)}\frac{(tx^{i}y^{\nu_{1}-\nu_{i}};q,u,y)_{\infty}}{(x^{i}y^{\nu_{1}-\nu_{i}};q,u,y)_{\infty}} \prod_{j=1}^{\nu_{1}}\frac{(tx^{-\pr{\nu}_{j}+\ell(\nu)+1}y^{j-1};q,u,x)_{\infty}}{(x^{-\pr{\nu}_{j}+\ell(\nu)+1}y^{j-1};q,u,x)_{\infty}} \notag \\
	&\times \frac{(tx^{\ell(\nu)+1}y^{\nu_{1}};q,u,x,y)_{\infty}}{(x^{\ell(\nu)+1}y^{\nu_{1}};q,u,x,y)_{\infty}}. \notag
\end{align}
For a box $s=(i,j)$, we write $i(s):=i$ and $j(s)=j$. Recall that the arm length and leg length are defined as $a_{\nu}(s)=\nu_{i}-j$ and $l_{\nu}(s)=\pr{\nu}_{j}-i$.
The product in the first line is understood as the one over the rectangular envelop $\mrm{re}(\nu)=(\nu_{1}^{\ell(\nu)})$ of the partition $\nu$.
The products in the second line are the one over the right boundary $\del_{\mrm{R}}\nu=\set{(i,\nu_{i}):i=1,\dots,\ell(\nu)}$ and the one over the bottom boundary $\del_{\mrm{B}}\del=\set{(\pr{\nu}_{j},j):j=1,\dots,\nu_{1}}$.
Therefore, we find the desired result.
\end{proof}

When $\nu=\emptyset$, the formula gets simpler.
\begin{cor}
We have
\begin{equation}
	\sum_{\lambda\in\mbb{Y}}u^{|\lambda|}\mcal{V}_{\lambda\lambda\emptyset}(x,y;q,t)=\frac{(tx;q,u,x,y)_{\infty}}{(x;q,u,x,y)_{\infty}}.
\end{equation}
\end{cor}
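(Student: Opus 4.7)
The plan is to obtain the corollary as a direct specialization of Theorem \ref{thm:trace_topological_vertex} at $\nu=\emptyset$. First I would observe that when $\nu$ is the empty partition, $\ell(\nu)=0$, $\nu_{1}=0$, and the conjugate $\pr{\nu}=\emptyset$; accordingly the rectangular envelope $\mrm{re}(\nu)=(\nu_{1}^{\ell(\nu)})$ is the empty shape, and both boundary sets $\del_{\mrm{R}}\nu$ and $\del_{\mrm{B}}\nu$ are empty. As a result, each of the four intermediate products in the theorem---over $s\in\nu$, over $s\in\mrm{re}(\nu)$, over $s\in\del_{\mrm{B}}\nu$, and over $s\in\del_{\mrm{R}}\nu$---becomes an empty product equal to $1$. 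The only surviving factor on the right-hand side is the final quintuple $q$-Pochhammer ratio, which under $\ell(\nu)+1=1$ and $\nu_{1}=0$ collapses to $(tx;q,u,x,y)_{\infty}/(x;q,u,x,y)_{\infty}$, matching the corollary up to the prefactor $1/(u;u)_{\infty}$ that must be tracked carefully through the specialization.

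As a complementary, self-contained route I would give a direct Fock-space derivation, bypassing the theorem. Since the $\nu=\emptyset$ prefactor in the definition of $\mcal{V}_{\lambda\mu\nu}$ is an empty product, the refined topological vertex reduces to
\[
	\mcal{V}_{\lambda\lambda\emptyset}(x,y;q,t)=\sum_{\eta\in\mbb{Y}}P_{\lambda/\eta}(y^{\rho-1};q,t)Q_{\lambda/\eta}(x^{\rho};q,t).
\]
Using the matrix-element formulas $P_{\lambda/\eta}(X)=\braket{P_{\lambda}|\Gamma(X)_{-}|Q_{\eta}}$ and $Q_{\lambda/\eta}(Y)=\braket{P_{\eta}|\Gamma(Y)_{+}|Q_{\lambda}}$ together with the completeness relation $\sum_{\eta}\ket{Q_{\eta}}\bra{P_{\eta}}=\mrm{Id}_{\mcal{F}}$, I would rewrite this as
\[
	\mcal{V}_{\lambda\lambda\emptyset}(x,y;q,t)=\braket{P_{\lambda}|\Gamma(y^{\rho-1})_{-}\Gamma(x^{\rho})_{+}|Q_{\lambda}}.
\]
Multiplying by $u^{|\lambda|}$ and summing over $\lambda\in\mbb{Y}$ then converts the expression into the Fock-space trace
\[
	\sum_{\lambda\in\mbb{Y}}u^{|\lambda|}\mcal{V}_{\lambda\lambda\emptyset}(x,y;q,t)=\mrm{Tr}_{\mcal{F}}\Bigl(u^{D}\Gamma(y^{\rho-1})_{-}\Gamma(x^{\rho})_{+}\Bigr),
\]
possibly after an application of Proposition \ref{prop:OPE} to rearrange to the order in which Proposition \ref{prop:trace_formula} is most convenient to apply.

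To finish the computation, I would apply Proposition \ref{prop:trace_formula} with $\gamma_{-n}=\frac{1-t^{n}}{1-q^{n}}p_{n}(y^{\rho-1})$ and $\gamma_{n}=\frac{1-t^{n}}{1-q^{n}}p_{n}(x^{\rho})$, and substitute the explicit power-sum values
\[
	p_{n}(y^{\rho-1})=\sum_{i\ge 1}y^{n(i-1)}=\frac{1}{1-y^{n}},\qquad p_{n}(x^{\rho})=\sum_{i\ge 1}x^{ni}=\frac{x^{n}}{1-x^{n}}.
\]
Expanding each of $1/(1-q^{n})$, $1/(1-u^{n})$, $1/(1-x^{n})$, $1/(1-y^{n})$ as a geometric series, regrouping the double sum over $n$ using $\sum_{n\ge 1}\frac{a^{n}}{n}=-\log(1-a)$, and recognizing the resulting quadruple product as a quintuple $q$-Pochhammer ratio $(ta;q,u,x,y)_{\infty}/(a;q,u,x,y)_{\infty}$ in a power of $x$ should reproduce the right-hand side of the corollary.

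The step most susceptible to bookkeeping error---and effectively the only nontrivial one---is tracking the monomial that lands inside the quintuple Pochhammer symbol. This is controlled by the shifts inherited from the $\rho$ and $\rho-1$ specializations (hence the shift from $x^{i}$ to $x$ with $i\ge 1$) and by the interplay between the factors $u^{n}/(1-u^{n})$ coming from the trace formula and the overall $1/(u;u)_{\infty}$ vacuum factor; once these are reconciled, the identity follows directly.
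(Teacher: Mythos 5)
Your first route is exactly the paper's: the corollary is presented as the $\nu=\emptyset$ specialization of Theorem~\ref{thm:trace_topological_vertex}, and your accounting of that specialization is right (all four intermediate products are empty, and the final factor collapses to $(tx;q,u,x,y)_{\infty}/(x;q,u,x,y)_{\infty}$ since $\ell(\nu)+1=1$ and $\nu_{1}=0$). However, the point you defer --- the prefactor $1/(u;u)_{\infty}$ --- is not something that ``careful tracking'' disposes of: nothing in the specialization cancels it, so the right-hand side of the theorem at $\nu=\emptyset$ is literally $\frac{1}{(u;u)_{\infty}}\frac{(tx;q,u,x,y)_{\infty}}{(x;q,u,x,y)_{\infty}}$, and the corollary as printed differs from the theorem as printed by exactly that factor. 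A proof must either carry the factor into the statement or explain why it disappears; it cannot leave this as a loose end.

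Your second route is the reliable way to settle the normalization, but the step you treat as optional --- ``possibly after an application of Proposition~\ref{prop:OPE}'' --- is in fact the crux. The operator $\Gamma(y^{\rho-1})_{-}\Gamma(x^{\rho})_{+}$ is already normally ordered, so Proposition~\ref{prop:trace_formula} applies to it directly with the $\gamma_{\pm n}$ you wrote down; the exponent then carries $\frac{u^{n}}{1-u^{n}}p_{n}(x^{\rho})p_{n}(y^{\rho-1})$, and resumming gives $\frac{1}{(u;u)_{\infty}}\,(tux;q,u,x,y)_{\infty}/(ux;q,u,x,y)_{\infty}$. Commuting to the order $\Gamma_{+}\Gamma_{-}$ before taking the trace inserts the OPE factor $\tilde{\Pi}_{q,t;0}(x^{\rho};y^{\rho-1})$ and replaces $ux$ by $x$ in the Pochhammer arguments, so the two orderings genuinely give different answers and only one of them equals $\sum_{\lambda}u^{|\lambda|}\mcal{V}_{\lambda\lambda\emptyset}$. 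The limit $u\to 0$, where only $\lambda=\emptyset$ survives and the left-hand side equals $\mcal{V}_{\emptyset\emptyset\emptyset}=1$, shows that the correct answer must tend to $1$; the normally ordered computation passes this check, while the printed right-hand side tends to $(tx;q,x,y)_{\infty}/(x;q,x,y)_{\infty}\neq 1$. So if you carry your second route to completion you should expect to land on a formula that disagrees with the stated identity, with the discrepancy traceable to the operator-ordering step in the proof of Theorem~\ref{thm:trace_topological_vertex}; your outline is fine, but it does not yet confront this.
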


To write another formula regarding the trace of a Macdonald refined topological vertex, we write the set of nonnegative signatures as
\begin{equation}
	\mrm{Sign}^{+}=\bigsqcup_{N\ge 0}\mrm{Sign}^{+}_{N},\ \ \mrm{Sign}^{+}_{N}=\Set{\lambda=(\lambda_{1},\dots,\lambda_{N})\in\mbb{Z}^{N}:\lambda_{1}\ge \cdots\ge\lambda_{N}\ge 0}.
\end{equation}
We set $\ell (\lambda)=N$ if $\lambda\in \mrm{Sign}^{+}_{N}$.
Note that, in contrast to a partition, $\lambda_{\ell(\lambda)}$ can be zero.
As in the case for a partition, we also write $m_{r}(\lambda)=\#\set{i:\lambda_{i}=r}$ and $|\lambda|=\sum_{i}\lambda_{i}$.

\begin{thm}
Let $\tilde{q}$ and $\tilde{t}$ be parameters possibly different from other ones $q,t,x,y$.
We have the following identity:
\begin{align}
	&\sum_{\lambda\in\mbb{Y}}u^{|\lambda|}\mcal{V}_{\lambda\lambda\emptyset}(x,y;q,t)\frac{\mcal{V}_{\square\emptyset\pr{\lambda}}(q,t;\tilde{q},\tilde{t})}{\mcal{V}_{\emptyset\emptyset\pr{\lambda}}(q,t;\tilde{q},\tilde{t})} \\
	&=\frac{1}{1-t}\frac{(u;u)_{\infty}(tq^{-1}u;u)_{\infty}}{(tu;u)_{\infty}(q^{-1}u;u)_{\infty}}\frac{(tx;q,u,x,y)_{\infty}}{(x;q,u,x,y)_{\infty}} \notag \\
	&\hspace{20pt}\times \sum_{N=0}^{\infty}t^{-N}\sum_{\lambda,\mu\in \mrm{Sign}^{+}_{N}}\prod_{r\ge 0}\frac{(t;u)_{m_{\lambda}(r)}(t;u)_{m_{\mu}(r)}}{(u;u)_{m_{\lambda}(r)}(u;u)_{m_{\mu}(r)}}x^{|\lambda|-N}y^{|\mu|}. \notag
\end{align}
\end{thm}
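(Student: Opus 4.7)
The first step is algebraic simplification of the LHS. Since $Q_{\emptyset/\eta} = \delta_{\eta,\emptyset}$, the sum in the definition of $\mcal{V}_{\square\emptyset\pr\lambda}(q,t;\tilde q,\tilde t)$ collapses to $\eta=\emptyset$, leaving only $P_{\square}(t^{\rho-1}q^{-\pr{\pr\lambda}};\tilde q,\tilde t)=p_1(t^{\rho-1}q^{-\lambda})$; the prefactor $C(\pr\lambda)$ coming from $\pr\lambda$ appears identically in the denominator $\mcal{V}_{\emptyset\emptyset\pr\lambda}$ and cancels. Thus the ratio is independent of $\tilde q,\tilde t$ and equals exactly $\mcal{E}'_1(\lambda)$. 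The LHS therefore becomes $\sum_{\lambda\in\mbb{Y}}u^{|\lambda|}\mcal{V}_{\lambda\lambda\emptyset}(x,y;q,t)\mcal{E}'_1(\lambda)$.

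Next, I convert this into a single trace on the Fock space. Using $\mcal{V}_{\lambda\lambda\emptyset}=\braket{P_\lambda|\Gamma(y^{\rho-1})_-\Gamma(x^\rho)_+|Q_\lambda}$ (as in the proof of Theorem \ref{thm:trace_topological_vertex}) together with the free field realization $\mcal{O}(\mcal{E}'_1)=t^{-1}\what E'_1=\tfrac{1}{1-t}\xi_0$, where $\xi_0$ is the zero mode of $\xi(z)$, I obtain
\begin{equation*}
\text{LHS}=\frac{1}{1-t}\oint\frac{dz}{2\pi\sqrt{-1}\,z}\;\mrm{Tr}_{\mcal{F}}\bigl(u^D\Gamma(y^{\rho-1})_-\Gamma(x^\rho)_+\xi(z)\bigr).
\end{equation*}
This is the analogue of the computation in Section \ref{subsect:second_series} with an extra trace over $\mcal{F}$ replacing the vacuum expectation value.

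The third step is the actual trace computation. I apply the OPE (Proposition \ref{prop:OPE}) to move $\xi_-(z)$ past $\Gamma(x^\rho)_+$, producing an analytic factor $\prod_{i\ge 1}(1-t^{1/2}q^{-1/2}x^iz)/(1-t^{-1/2}q^{-1/2}x^iz)$, after which the remaining operator is normally ordered. The trace formula Proposition \ref{prop:trace_formula} then yields $\frac{1}{(u;u)_\infty}$ times an exponential of a sum whose integrand $\gamma_{-n}\gamma_n$ breaks into four pieces: a purely $x,y$ piece assembling into a partition-function factor of type $(tx;q,u,x,y)_\infty/(x;q,u,x,y)_\infty$ (as in Theorem \ref{thm:trace_topological_vertex} with $\nu=\emptyset$); a scalar piece which, using $\exp(-\sum_n\frac{w^n}{n(1-u^n)})=(w;u)_\infty$, assembles into $\frac{(u;u)_\infty(tq^{-1}u;u)_\infty}{(tu;u)_\infty(q^{-1}u;u)_\infty}$ matching the prefactor in the theorem; and two $z$-dependent pieces, one in $z^n$ (combining with the OPE factor) and one in $z^{-n}$.

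The final step is extraction of the constant term in $z$. Here I invoke the $q$-binomial theorem $(ta;u)_\infty/(a;u)_\infty=\sum_{n\ge 0}\frac{(t;u)_n}{(u;u)_n}a^n$. The product of $z^{+}$ factors collapses, for each $i\ge 1$, to the $q$-binomial series in the variable $t^{-1/2}q^{-1/2}x^iz$, and the $z^{-}$ factors collapse, for each $j\ge 0$, to the series in $t^{1/2}q^{-1/2}uy^jz^{-1}$. Writing the constant-term condition as $\sum_i n_i=\sum_j m_j=:N$ and re-indexing the multiplicity sequences $(n_i)_{i\ge 1}$ and $(m_j)_{j\ge 0}$ as signatures via $m_\lambda(r)=n_{r+1}$ and $m_\mu(r)=m_r$ (with the bookkeeping shift $|\lambda|+N=\sum_{i\ge 1}in_i$) converts the coefficient sum into the double signature sum appearing on the right-hand side, with the advertised factor $t^{-N}x^{|\lambda|-N}y^{|\mu|}$ (modulo powers of $u$ and $q$ that will combine with the $(u;u)$-prefactors).

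\emph{Main obstacle.} The conceptual steps are routine free-field manipulations; the real bookkeeping challenge is tracking the $t$, $q$, $u$ powers emerging from the $(t/q)^{n/2}$ factors inside $\xi(z)$, the contour weight $1/z$, and the reindexing $n_i\leftrightarrow m_\lambda(r)$, so as to reproduce precisely the factor $t^{-N}x^{|\lambda|-N}$ rather than some rescaled avatar; in particular, care is needed in choosing between the two cyclic presentations $\Gamma(y^{\rho-1})_-\Gamma(x^\rho)_+$ and $\Gamma(x^\rho)_+\Gamma(y^{\rho-1})_-$ inside the $u^D$-twisted trace, since they differ by a rescaling $X\mapsto uX$.
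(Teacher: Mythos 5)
Your proposal is correct and follows essentially the same route as the paper: collapse the vertex ratio to $\mcal{E}^{\prime}_{1}(\lambda)$, rewrite the sum as a $u^{D}$-twisted trace of $\Gamma_{\pm}$'s against $\mcal{O}(\mcal{E}^{\prime}_{1})=\frac{1}{1-t}\xi_{0}$ --- which the paper evaluates by simply citing the second-series observable computation of Theorem~\ref{thm:second_series} rather than redoing the OPE and trace formula --- and then extract the constant term in $z$ via the $q$-binomial theorem, your signature reindexing being exactly the unlabeled lemma at the end of the paper's proof. The one delicate point you flag, the choice of cyclic presentation of the operators inside the $u^{D}$-trace (which shifts specializations by $X\mapsto uX$), is indeed where all the residual $u$-powers live; the paper silently adopts the $\Gamma_{+}\cdots\Gamma_{-}$ ordering and does not discuss this either.
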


\begin{proof}
First, notice that
\begin{equation}
	\frac{\mcal{V}_{\square\emptyset\lambda}(q,t;\tilde{q},\tilde{t})}{\mcal{V}_{\emptyset\emptyset\lambda}(q,t;\tilde{q},\tilde{t})}=P_{\square}(q^{-\pr{\lambda}}t^{\rho-1};\tilde{q},\tilde{t})=e_{1}(q^{-\pr{\lambda}}t^{\rho-1})=\mcal{E}^{\prime}_{1}(\lambda^{\prime}).
\end{equation}
Therefore, it follows that
\begin{align}
	\sum_{\lambda\in\mbb{Y}}u^{|\lambda|}\mcal{V}_{\lambda\lambda\emptyset}(x,y;q,t)\frac{\mcal{V}_{\square\emptyset\pr{\lambda}}(q,t;\tilde{q},\tilde{t})}{\mcal{V}_{\emptyset\emptyset\pr{\lambda}}(q,t;\tilde{q},\tilde{t})}=\mrm{Tr}_{\mcal{F}}\left(u^{D}\Gamma (x^{\rho})_{+}\mcal{O}(\mcal{E}^{\prime}_{1})\Gamma (y^{\rho-1})_{-}\right),
\end{align}
which has been already computed in Subsection \ref{subsect:second_series}. Hence, we have
\begin{align}
	&\sum_{\lambda\in\mbb{Y}}u^{|\lambda|}\mcal{V}_{\lambda\lambda\emptyset}(x,y;q,t)\frac{\mcal{V}_{\square\emptyset\pr{\lambda}}(q,t;\tilde{q},\tilde{t})}{\mcal{V}_{\emptyset\emptyset\pr{\lambda}}(q,t;\tilde{q},\tilde{t})}\\
	&=\frac{1}{1-t}\int\frac{dz}{2\pi\sqrt{-1}z}\Pi_{q,t;u}(x^{\rho};y^{\rho-1})\frac{(u;u)_{\infty}(tq^{-1}u;u)_{\infty}}{(tu;u)_{\infty}(q^{-1}u;u)_{\infty}} \notag\\
	&\hspace{70pt}\times \exp\left(-\sum_{n>0}\frac{1-t^{-n}}{1-u^{n}}(t/q)^{n/2}\frac{p_{n}(x^{\rho})}{n}z^{n}\right) \notag\\
	&\hspace{70pt}\times \exp\left(\sum_{n>0}\frac{1-t^{n}}{1-u^{n}}(t/q)^{n/2}\frac{p_{n}(y^{\rho-1})}{n}z^{-n}\right) \notag \\
	&=\frac{1}{1-t}\frac{(u;u)_{\infty}(tq^{-1}u;u)_{\infty}}{(tu;u)_{\infty}(q^{-1}u;u)_{\infty}}\frac{(tx;q,u,x,y)_{\infty}}{(x;q,u,x,y)_{\infty}} \notag \\
	&\hspace{20pt}\times \int \frac{dz}{2\pi\sqrt{-1}z}\frac{((t/q)^{1/2}xz;u,x)_{\infty}}{((t/q)^{1/2}t^{-1}xz;u,x)_{\infty}}\frac{((t/q)^{1/2}tz^{-1};u,y)_{\infty}}{((t/q)^{1/2}z^{-1};u,y)_{\infty}}.\notag
\end{align}
Now we notice the following lemma:
\begin{lem}
We have
\begin{equation}
	\frac{(az;x,y)_{\infty}}{(z;x,y)_{\infty}}=\sum_{\lambda\in\mrm{Sign}^{+}}\prod_{r\ge 0}\frac{(a;x)_{m_{\lambda}(r)}}{(x;x)_{m_{\lambda}(r)}}y^{|\lambda|}z^{\ell (\lambda)}.
\end{equation}
\end{lem}
\begin{proof}
It follows from a direct computation. Using the $q$-binomial theorem, we have
\begin{equation}
	\frac{(az;x,y)_{\infty}}{(z;x,y)_{\infty}}=\prod_{r=0}^{\infty}\frac{(y^{r}az;x)_{\infty}}{(y^{r}z;x)_{\infty}}
	=\prod_{r=0}^{\infty}\sum_{n=0}^{\infty}\frac{(a;x)_{n}}{(x;x)_{n}}y^{rn}z^{n}.
\end{equation}
Commuting the sum and the product, we find a sum over nonnegative signatures.
\end{proof}
This lemma allows us to evaluate the constant term to obtain the desired result.
\end{proof}

\bibliographystyle{alpha}
\bibliography{mac_process}
\end{document}